\numberwithin{equation}{section}
\newtheorem{theorem}{Theorem}
\numberwithin{theorem}{section}
\newtheorem{proposition}[theorem]{Proposition}
\newtheorem{lemma}[theorem]{Lemma}
\newtheorem{corollary}[theorem]{Corollary}
\newtheorem{question}[theorem]{Question}
\theoremstyle{definition}\newtheorem{definition}[theorem]{Definition}
\theoremstyle{definition}\newtheorem{remark}[theorem]{Remark}
\theoremstyle{definition}\newtheorem{example}[theorem]{Example}
\theoremstyle{definition}\newtheorem*{notation*}{Notation}
\theoremstyle{definition}\newtheorem*{convention*}{Convention}
\theoremstyle{definition}
\theoremstyle{definition}\newtheorem*{acknowledgment*}{Acknowledgments}
\newcommand{\N}{\mathbb{N}}
\newcommand{\R}{\mathbb{R}}
\newcommand{\C}{\mathbb{C}}
\newcommand{\A}{\mathcal{A}}
\newcommand{\B}{\mathcal{B}}
\newcommand{\h}{\mathcal{H}}
\newcommand{\K}{\mathcal{K}}
\newcommand{\E}{\mathcal{E}}
\newcommand{\F}{\mathcal{F}}
\newcommand{\M}{\mathcal{M}}
\newcommand{\T}{\mathbb{T}}
\newcommand{\bh}{\mathcal{B}(\mathcal{H})}
\newcommand{\bk}{\mathcal{B}(\mathcal{K})}
\DeclareMathOperator{\Span}{span}
\DeclareMathOperator{\ospan}{\overline{span}}
\DeclareMathOperator{\ran}{\mathscr{R}}
\DeclareMathOperator{\Lat}{Lat}
\DeclareMathOperator{\Alg}{Alg}
\begin{document}

\title{ Lattices of Logmodular algebras}

\author{B.V. Rajarama Bhat}
\email{bhat@isibang.ac.in}

\author{Manish Kumar}
\email{manish\_rs@isibang.ac.in}

\address{Indian Statistical Institute, Stat-Math. Unit, R V College Post, Bengaluru 560059, India}

\begin{abstract}
A subalgebra $\mathcal{A}$ of a $C^*$-algebra $\mathcal{M}$ is logmodular (resp. has factorization) if the set $\{a^*a; a\text{ is invertible with }a,a^{-1}\in\mathcal{A}\}$ is  dense in  (resp. equal to) the set of all positive and invertible elements of $\mathcal{M}$. There are large classes of well studied algebras, both in commutative and non-commutative settings, which are known to be logmodular. In this paper, we show that the lattice of projections in a von Neumann algebra $\mathcal{M}$ whose ranges are  invariant under a logmodular algebra in $\mathcal{M}$,  is a commutative subspace lattice. Further, if $\mathcal{M}$ is a factor then this lattice is a nest.  As a special case, it follows that all reflexive (in particular, completely distributive CSL) logmodular subalgebras of type I factors are nest algebras, thus answering a question  of  Paulsen and Raghupathi  \cite{PaRa}. 
We also discuss some sufficient criteria under which an algebra having factorization is automatically reflexive and is a nest algebra.
\end{abstract}

\keywords{Logmodular algebra, factorization, nest, commutative subspace lattice, nest algebra, CSL algebra}
\subjclass[2020]{Primary:   47L35. Secondary: 47L30, 46K50}

\maketitle

\section{Introduction}
The well-known Cholesky factorization theorem states that any  positive and invertible  $n\times n$ matrix can be written as $U^*U$ for some invertible upper triangular $n\times n$  matrix $U$ (so the inverse $U^{-1}$ is also upper triangular). We then say that the algebra of upper triangular matrices has {\em factorization} in $M_n$, the algebra of all $n\times n$ complex matrices. Using this or otherwise, one can show that an algebra consisting of block upper-triangular matrices (with respect to some orthonormal basis) also admits such factorization in $M_n$. Is there any other algebra in $M_n$ with this property? Paulsen and Raghupathi \cite{PaRa} showed that any algebra in $M_n$ containing all diagonal matrices, has factorization in $M_n$ if and only if it is unitarily equivalent to an algebra of block upper triangular matrices (they actually studied the notion called logmodular algebra, which is our interest in this paper as well). The general characterization was settled by Juschenko \cite{Ju} who showed that upto a change of basis, all algebras  with factorization in $M_n$ contain the diagonal algebra, thus showing that algebras of block upper triangular matrices are all which have factorization.

A natural question that arises is up to what extent these results generalize to infinite-dimensional setting. Let $\h$ be a complex and separable Hilbert space, and let $\bh$ denote the algebra of all bounded operators on $\h$. Let $\E$  be a collection of closed subspaces of $\h$ totally ordered with respect to inclusion (such  a collection is called a {\em nest}), and let $\Alg\E$ (called {\em nest algebra})  denote the algebra of all operators in $\bh$ which leave all subspaces in $\E$  invariant. Note that an algebra of block upper triangular matrices is nothing but $\Alg\E$  for some nest $\E$  in $\C^n$, and vice versa. So a generalization of Cholesky factorization  would be to ask whether $\Alg\E$  has factorization  in $\bh$  for a nest $\E$ in $\h$. Gohberg and Krein \cite{GoKr} appear to be the first who studied factorization  along nest algebras,  mainly examining  positive and invertible operators  `close' to identity operator. Arveson \cite{Ar2} considered factorization property of nest algebras arising out of nests of order type $\mathbb{Z}$. It was Larson \cite{La} who investigated factorization  property of arbitrary nest algebras, and he proved in particular that if $\E$ is a countable complete nest in $\h$, then $\Alg\E$ has factorization in $\bh$. Again what can be said about the converse? That is, if a subalgebra $\A$ has factorization in $\bh$, are they of the form $\Alg\E$ for some countable complete  nest  $\E$? In this paper, we show that this is indeed the case if we also assume that $\A$ is reflexive.

More generally, factorization property of  subalgebras of  arbitrary von Neumann algebras are considered. A classical result says that the Hardy algebra $\h^\infty(\T)$ on the unit circle has factorization  in $L^\infty(\T)$ i.e. for any non-negative element $f\in L^\infty(\T)$ with $1/f\in L^\infty(\T)$, there is an element $h\in\h^\infty(\T)$ with $1/h\in\h^\infty(\T)$ such that $f=\bar{h}h$. Some other function algebras like weak$^*$-Dirichlet algebras introduced by Srinivasan and Wang \cite{SrWa} have factorization property. Taking cue from analytic function algebras,  Arveson \cite{Ar1}  introduced the theory of finite maximal subdiagonal algebras as  noncommutative variant and considered many results analogous to the classical Hardy space theory, showing in particular that they have factorization property. Later several authors have examined such algebras  in different settings. For more about  algebras with factorization  see \cite{Ar1, Ar2, La, La1, Pi, So, McMuSa, Da, GoKr}, and for some closely related  properties see \cite{Po, Po1, AnKa, JiSa,  McMuSa, So1}  to name a few.

An algebra  with factorization property   is a particular case of  {\em logmodular algebras}. The notion of logmodularity was first introduced by Hoffman \cite{Ho} for subalgebras of commutative $C^*$-algebras, whose main idea was to generalize some classical results of analytic function theory in the unit disc. Blecher and Labuschagne \cite{BlLa} extended this notion  to subalgebras of non-commutative $C^*$-algebras. They studied completely contractive representations on such algebras and their extension properties. Paulsen and Raghupathi \cite{PaRa}  also studied representations of logmodular algebras and explored conditions under which a contractive representation is automatically completely contractive. In \cite{Ju}, Juschenko gave a complete characterization of all  logmodular  subalgebras of $M_n$. See \cite{BlLa1} for a  beautiful survey on logmodular algebras  arising out of tracial subalgebras and their relation to finite subdiagonal algebras among others. They show how most results generalized in 1960's from the Hardy space on the unit disc to more general function algebras generalize further to the non-commutative situation, though more sophisticated proof techniques had to be developed for the purpose.  We list some additional references on logmodular algebras in \cite{Ho, HoRo, FoSu, BlLa, BlLa1, PaRa, Ju}.

In this article, our  aim is to understand the behaviour of lattice of subspaces (or projections) invariant under logmodular algebras, and use it to  characterize  reflexive logmodular algebras. The main result answers a conjecture by  \cite{PaRa} in the affirmative, which asks whether every completely distributive CSL logmodular algebra of $\bh$ is a nest algebra. In fact, we show more generally that any $\M$-reflexive logmodular subalgebra of a factor $\M$ is a nest subalgebra. Moreover, we explore some sufficient criteria under which an algebra with factorization is automatically reflexive and  is a nest algebra. In particular it is proved that a subalgebra of $\bh$ with factorization, whose lattice consists of finite dimensional atoms, is reflexive and so it is a nest algebra. Also we show that any subalgebra with factorization  in a finite dimensional von Neumann algebra  must be a nest  subalgebra. Finally we give an example of a subalgebra in a  von Neumann algebra (certainly infinite dimensional), which has factorization but it is not a nest subalgebra.

\section{Definitions, examples and main results}

All Hilbert spaces considered in this paper are complex and separable. Throughout  $\bh$  denotes the algebra of all bounded operators on a Hilbert space  $\h$. By subspaces, projections and operators on $\h$, we mean closed subspaces, orthogonal projections and bounded operators  respectively. For any subspace  $F$ contained in a subspace $E$, we denote the orthogonal complement of $F $ in $E$ by $E\ominus F$. We write  $E^\perp$ for the orthogonal complement  of $E$ i.e. $E^\perp=\h\ominus E$. The projection onto a subspace $E$ is denoted by $P_E$. For any projection $p$, we write $p^\perp$ for the projection $1-p$, where $1$ is the identity operator on $\h$.   If $\{p_i\}_{i\in \Lambda}$ is a collection of projections, then $\vee_{i\in \Lambda}p_i$ denotes the projection onto the smallest subspace containing ranges of all $p_i's$,  and $\wedge_{i\in \Lambda}p_i$ denotes the projection onto the intersection of ranges of all $p_i's$. For any operator $x$ in $\bh$, $\ker x$ and $\ran(x)$ denote the kernel and range of $x$ respectively. All algebras considered will be subalgebras of  $\bh$, which are always assumed to be norm closed, and contain the identity operator which we shall denote by $1$.  Unless said otherwise, convergence of any sequence of operators is taken in norm topology.

We briefly recall some basic notions of  von Neumann algebra theory. A {\em von Neumann algebra}   is a self-adjoint subalgebra of $\bh$ containing $1$ and  closed in weak operator topology (WOT). A von Neumann algebra $\M$  is called  {\em factor} if its center $\M\cap\M'$ is trivial. Here $\M'$ denotes the commutant of $\M$ in $\bh$. Let $p,q$ be two projections in $\M$. Then $p$ and $q$  are said to be {\em (Murray-von Neumann) equivalent}, and denoted $p\sim q$, if there exists a partial isometry $v\in\M$  such that $v^*v=p$ and $vv^*=q$. We say $p\preceq q$ if there is a projection $q_1\in\M$  such that $q_1\leq q$ and $p\sim q_1$. Here $\leq$ denotes the usual order of self-adjoint operators, and $<$ will denote the strict order. A projection $p\in\M$  is called {\em finite} if the only projection $q$ in $\M$ such that $q\leq p$ and $q\sim p$ is $p$. The von Neumann algebra $\M$  is called {\em finite} if $1\in\M$  is finite. Note that if $p$ is a projection in $\M$, then $p\M p$ is a von Neumann algebra which is $*$-isomorphic to a von Neumann subalgebra of $\B(\K)$, where $\K$ is the range subspace of the projection $p$. See \cite{Co} for more details on these topics.

We now define some notations relevant to our results. Fix a  von Neumann algebra $\M$, which is always assumed to be  acting on a separable Hilbert space. Let $\A$ be a norm closed subalgebra (not necessarily self-adjoint) of $\M$. We denote by $\A^*$  the set $\{x\in \M; x^*\in\A\}$, and   by $\A^{-1}$ the set $\{x\in\A; x\mbox{ is invertible with }x^{-1}\in\A\}$. Let $\M_+^{-1}$ denote the set of all positive and invertible elements of $\M$. Note that all these notations make sense for any $C^*$-algebra. But our main focus in this paper is on von Neumann algebras.

For $\M$ and $\A$ as above, let $\Lat_\M\A$ denote the lattice of all projections in $\M$ whose ranges are invariant under every operator in $\A$ i.e.
\begin{align*}
    \Lat_\M\A=\{p\in\M; p=p^2=p^*\mbox{ and }  ap=pap \;\forall a\in\A\}.
\end{align*}
If $\M=\bh$, we denote $\Lat_\M\A$ simply by $\Lat\A$. Note that  if $\A$ is  also  considered as a subalgebra of $\bh$ (where $\M\subseteq\bh$), then we have $\Lat_\M\A=\M\cap\Lat\A$. Also note that $0,1\in\Lat_\M\A$ and $\Lat_\M\A$  is closed under the operations $\vee$ and $\wedge$ of arbitrary collection as well as  closed under strong operator topology (SOT).

Dually, let $\E$ be a collection of projections  in $\M$ (not necessarily a lattice), and let $\Alg_\M\E$ (or  $\Alg\E$ when $\M=\bh$) denote the algebra of all operators in $\M$  which leave range of every projection of $\E$ invariant i.e.
\begin{align*}
    \Alg_\M\E=\{x\in\M; xp=pxp\;\; \forall p\in\E\}.
\end{align*}
Again we note that $\Alg_\M\A=\M\cap \Alg\E$. Also it is clear that $\Alg_\M\E$ is a unital subalgebra of $\M$, which is closed in weak operator topology.

Following \cite{Ho, BlLa}, we now consider the following definitions.

\begin{definition}
Let $\A$ be a subalgebra of a $C^*$-algebra $\M$. Then $\A$ is called \emph{logmodular or has logmodularity} in $\M$  if the set $\{a^*a; a\in\A^{-1}\}$ is norm dense in $\M_+^{-1}$. The algebra $\A$ is said to have {\em factorization or strong logmodularity} in $\M$  if  $\{a^*a; a\in\A^{-1}\}=\M_+^{-1}$.
\end{definition}

It is clear that any algebra  having factorization is logmodular. Below we collect some known and straightforward results about logmodular algebras whose proof is simple,  so it is left to the reader. (See Proposition 4.6, \cite{BlLa}).

\begin{proposition}\label{logmodularity are preserved under isomorphism}
Let $\phi:\M\to\mathcal{N}$ be a $*$-isomorphism between two $C^*$-algebras, and let $\A$ be a subalgebra of $\M$. Then $\A$ has logmodularity (resp. factorization) in $\M$ if and only if $\phi(\A)$ has logmodularity (resp. factorization) in $\mathcal{N}$. In particular if $U$ is an appropriate unitary, then  $U^*\A U$ has logmodularity (resp. factorization) in $U^*\M U$ if and only if $\A$ has logmodularity (resp. factorization) in $\M$.
\end{proposition}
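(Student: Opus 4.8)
The plan is to exploit the three structural features of a $*$-isomorphism $\phi:\M\to\mathcal{N}$ that control every ingredient in the definition of logmodularity: $\phi$ respects the algebraic operations and the involution, it preserves invertibility and positivity (in both directions), and it is isometric. Granting these, the proposition reduces to transporting the relevant sets across $\phi$ and observing that each defining condition is carried along. Since $\phi^{-1}$ is again a $*$-isomorphism, it suffices to prove each implication in one direction.

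First I would record the elementary bijection at the level of invertible elements of the subalgebra. Because $\phi$ is a bijective homomorphism with $\phi(x^{-1})=\phi(x)^{-1}$ whenever $x$ is invertible, an element $a\in\A$ lies in $\A^{-1}$ precisely when $\phi(a)\in\phi(\A)^{-1}$. Combined with $\phi(a^*a)=\phi(a)^*\phi(a)$, this shows that $\phi$ carries $\{a^*a:a\in\A^{-1}\}$ bijectively onto $\{b^*b:b\in\phi(\A)^{-1}\}$. Next I would check that $\phi$ maps $\M_+^{-1}$ bijectively onto $\mathcal{N}_+^{-1}$: a $*$-isomorphism preserves positivity (since $x\geq 0$ iff $x=y^*y$ for some $y$, and $\phi(y^*y)=\phi(y)^*\phi(y)$) and preserves invertibility, as does $\phi^{-1}$, so $x\in\M_+^{-1}$ iff $\phi(x)\in\mathcal{N}_+^{-1}$.

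With these two bijections in hand the factorization statement is immediate and purely algebraic: the set equality $\{a^*a:a\in\A^{-1}\}=\M_+^{-1}$ transports under the bijection $\phi$ to $\{b^*b:b\in\phi(\A)^{-1}\}=\mathcal{N}_+^{-1}$, and conversely via $\phi^{-1}$. For the logmodularity statement the only extra input is that norm density is preserved, which follows because every $*$-isomorphism between $C^*$-algebras is isometric, hence a homeomorphism for the norm topology; consequently a subset of $\M_+^{-1}$ is dense in $\M_+^{-1}$ if and only if its image is dense in $\mathcal{N}_+^{-1}$. Finally, the ``in particular'' clause is just the special case $\phi=\mathrm{Ad}\,U^*$, namely $\phi(x)=U^*xU$, a $*$-isomorphism of $\M$ onto $U^*\M U$ carrying $\A$ onto $U^*\A U$; applying the first part yields the claim.

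I expect no genuine obstacle here. The one non-formal ingredient is the isometry of $*$-isomorphisms, invoked solely to preserve norm density in the logmodular case; the factorization case needs no topology whatsoever, being a transport of a set equality across a bijection.
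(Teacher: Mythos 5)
Your proposal is correct, and it coincides with the argument the paper intends: the paper states this proposition without proof (calling it simple and leaving it to the reader, with a pointer to Proposition 4.6 of \cite{BlLa}). Your transport of the sets $\{a^*a;\, a\in\A^{-1}\}$ and $\M_+^{-1}$ across the $*$-isomorphism, invoking isometry of $*$-isomorphisms only for the density (logmodular) case, is exactly the standard argument being alluded to.
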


\begin{proposition} (Proposition 4.1, \cite{BlLa})
Let $\A$ be a subalgebra of a $C^*$-algebra $\M$. We have the following:
\begin{enumerate}
    \item $\A$ has factorization in $\M$ if and only if $\A^*$ has factorization in $\M$ if and only if for every invertible element $x\in\M$, there exist unitaries $u,v\in\M$ and invertible elements $a,b\in\A^{-1}$ such that $x=ua=bv$.
    \item $\A$ is logmodular in $\M$ if and only if $\A^*$ is logmodular in $\M$ if and only if for each invertible element $x\in\M$, there exist sequences $\{u_{n}\}, \{v_n\}$ of unitaries in $\M$ and invertible elements $\{a_n\}, \{b_n\}$ in $\A^{-1}$ such that $x=\lim_nu_na_n=\lim_nb_nv_n$.
\end{enumerate}
\end{proposition}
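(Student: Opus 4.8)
The plan is to derive both parts from a single mechanism, the polar decomposition of invertible elements, combined with two elementary closure properties of $\A^{-1}$. First I would record the tools. Every invertible $x\in\M$ factors as $x=u|x|$ with $|x|=(x^*x)^{1/2}\in\M_+^{-1}$ and $u:=x|x|^{-1}$ a genuine unitary (it satisfies $u^*u=1$ and is invertible, hence $uu^*=1$). Next, $\A^{-1}$ is closed under inversion by definition, and one checks directly that $(\A^*)^{-1}=(\A^{-1})^*$; consequently $\{a^*a:a\in(\A^*)^{-1}\}=\{bb^*:b\in\A^{-1}\}$, so the involution $b\mapsto b^{-1}$ together with the fact that inversion preserves $\M_+^{-1}$ will interchange the defining sets for $\A$ and $\A^*$. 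This last point disposes of the equivalences ``$\A$ vs.\ $\A^*$'' in both parts: if $h\in\M_+^{-1}$ and $b^*b=h^{-1}$ (resp.\ $b_n^*b_n\to h^{-1}$), then $cc^*=h$ (resp.\ $c_nc_n^*\to h$) with $c=b^{-1}$. So the real content is the equivalence with the conditions on invertible elements.

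For part (1), to prove factorization $\Rightarrow$ the $x=ua=bv$ condition I would fix invertible $x$, factor $x^*x=a^*a$ with $a\in\A^{-1}$, and set $u:=xa^{-1}$; then $u^*u=(a^*)^{-1}x^*xa^{-1}=(a^*)^{-1}a^*aa^{-1}=1$, and since $u$ is invertible it is unitary, giving $x=ua$. Applying this to $x^{-1}$ and inverting yields the second factorization $x=bv$ with $b=(a')^{-1}\in\A^{-1}$. Conversely, given the form $x=ua$ for all invertibles, I would recover factorization by applying it to $h^{1/2}$ for $h\in\M_+^{-1}$: from $h^{1/2}=ua$ one gets $a^*a=h^{1/2}u^*uh^{1/2}=h$ with $a\in\A^{-1}$.

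Part (2) follows the same template with norm limits, and here lies the only genuine difficulty. Starting from logmodularity applied to $x^*x$, I obtain $a_n\in\A^{-1}$ with $a_n^*a_n\to x^*x$, and I must manufacture unitaries $u_n$ with $u_na_n\to x$, taking $u_n$ from the polar decomposition of $xa_n^{-1}$. The obstacle is that passing from $a_n^*a_n\to x^*x$ to $u_na_n\to x$ requires uniform control of both $\|a_n\|$ and $\|a_n^{-1}\|$, since the individual factors could a priori blow up. This is exactly what the $C^*$-identity and continuity of inversion supply: $\|a_n\|^2=\|a_n^*a_n\|\to\|x^*x\|$ and $\|a_n^{-1}\|^2=\|(a_n^*a_n)^{-1}\|\to\|(x^*x)^{-1}\|$ stay bounded, whence $(xa_n^{-1})^*(xa_n^{-1})=(a_n^*)^{-1}(x^*x-a_n^*a_n)a_n^{-1}+1\to1$, its positive square root $|xa_n^{-1}|\to1$, and therefore $u_na_n\to x$ by the boundedness of $\|a_n\|$ and $\|a_n^{-1}\|$. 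The second representation $x=\lim b_nv_n$ comes, as in part (1), from applying the result to $x^{-1}$ and using continuity of inversion on the invertibles. The reverse implication mirrors part (1): applying the limit condition to $h^{1/2}$ and using joint continuity of multiplication gives $a_n^*a_n=(u_na_n)^*(u_na_n)\to h$, which is precisely the asserted density.
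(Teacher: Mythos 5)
Your proof is correct: the paper itself gives no argument for this proposition (it is quoted as a known, straightforward result from Proposition 4.1 of \cite{BlLa}), and your polar-decomposition argument, together with the inversion trick $b\mapsto b^{-1}$ interchanging $\{a^*a\}$ and $\{bb^*\}$ and the norm bounds $\|a_n\|^2=\|a_n^*a_n\|$, $\|a_n^{-1}\|^2=\|(a_n^*a_n)^{-1}\|$ supplying the uniform control in the limiting case, is exactly the standard route the cited reference takes. Nothing is missing.
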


There are plenty of such algebras known in literature. The following are examples of logmodular algebras in commutative $C^*$-algebras.

\begin{example}(Function algebras)
A classical result of Szeg\"{o} (Theorem 25.13, \cite{Co}) says that the Hardy algebra $\h^\infty(\T)$ has factorization in $L^\infty(\T,\mu)$. Here $\T$ is the unit circle,  $\mu$ is the one-dimensional Lebesgue measure on $\T$ and $\h^\infty(\T)$ is the algebra of all essentially bounded functions on $\T$ whose negative Fourier coefficients are zero.

More generally, let $m$ be a probability measure, and let $\A$ be a unital subalgebra of $L^\infty(m)$ satisfying the following: (i) $\int fgdm=\int fdm\int gdm$ for all $f,g\in\A$, and $(ii)  $ if $h\in L^1(m)$ with $h\geq0$ a.e. and $\int fhdm=\int fdm$ for all $f\in\A$, then $h=1$ a.e. Let $\h^2(m)$ be the closure of $\A$ in the Hilbert space $L^2(m)$, and let $\h^\infty(m)=\h^2(m)\cap L^\infty(m)$. Then the proof of Theorem 4 in \cite{HoRo} says that $\h^\infty(m)$ has factorization in $L^\infty(m)$. The algebra $\h^\infty(m)$ satisfies many other equivalent conditions analogues to classical Hardy space theory (see Theorem 3.1, \cite{SrWa} for details). Also see \cite{Ho, HoRo, SrWa} for more concrete examples of such measures and algebras.
\end{example}

\begin{example}(Dirichlet algebras)  A closed unital subalgebra $\A$ of a commutative $C^*$-algebra $C(X)$ is called  {\em Dirichlet algebra} if $\A+\bar{\A}$ is uniformly dense in $C(X)$ (equivalently,  $\Re\A$ is uniformly dense in $\Re C(X)$), where $\Re\A$ $[\Re C(X)]$ denotes the set of real parts of the functions in $\A$ $[C(X)]$. If $\A$ is a Dirichlet algebra, then since $\log |\A^{-1}|\subseteq\Re\A$, it is immediate that  $\log |\A^{-1}|$ is dense in $\Re C(X)$; hence $\A$ is a logmodular subalgebra of $C(X)$.  But some Dirichlet algebras may not have factorization. For example, consider the algebra $A(\mathbb{D})$ of all continuous functions on the closed unit disc $\overline{{\mathbb{D}}}$ which is holomorphic on the open unit disc $\mathbb{D}$. Then $A(\mathbb{D})$ is a Dirichlet algebra when considered as the subalgebra of $C(\mathbb{T})$, which is a consequence of Fej\' er-Riesz Theorem on factorization of positive trigonometric polynomials, but $A(\mathbb{D})$ does not have factorization in $C(\T)$. On the other hand, $\h^\infty(\T)$ is an example of an algebra which has factorization, but which is not a Dirichlet algebra. See \cite{ Ho} for details of these facts and more concrete examples of Dirichlet algebras.
\end{example}

To see some examples and other properties of noncommutative algebras having factorization, we recall some  notions to this end. Let $\M$ be a von Neumann algebra, and  let $\E$ be a lattice  of projections in $\M$ (i.e. $\E$ is closed under usual lattice operations $\vee$ and $\wedge$ of finite family of projections). Then  $\E$ is called {\em complete} if $0,1\in\E$, and  $\vee_{i\in\Lambda}p_i$ and $\wedge_{i\in\Lambda}p_i\in\E$ for any arbitrary family $\{p_i\}_{i\in\Lambda}$ in $\E$. The lattice $\E$ is called {\em commutative subspace lattice (CSL)} if projections of $\E$ commute with one another. Moreover,  $\E$ is called  {\em nest} if $\E$ is ordered by usual operator ordering i.e. for any $p,q\in\E$, either $p\leq q$ or $q\leq p$ holds true. We remark here that some authors assume a nest or a CSL to be always complete.

A subalgebra $\A$ is called a {\em nest subalgebra} of $\M$ (or {\em nest algebra} when $\M=\bh$) if $\A=\Alg_\M\E$ for a nest $\E$ in $\M$. Further, a subalgebra $\A$ of  $\M$ is called $\M$-{\em reflexive} (or {\em reflexive} when $\M=\bh$) if $\A=\Alg_\M\Lat_\M\A$. It is clear that any subalgebra of $\M$ of the form $\Alg_\M\E$ for some collection $\E$ of projections in $\M$ is always $\M$-reflexive. In particular,  a nest subalgebra of $\M$ is $\M$-reflexive. It should be noted here that if $\M\subseteq\bh$, then a subalgebra $\A$ of $\M$ can be reflexive in $\bh$, but it need not be $\M$-reflexive.

The following is a well-known result by Larson \cite{La}  regarding factorization property  of nest algebras.

\begin{theorem}\label{Larson's result}(Theorem 4.7, \cite{La})
Let $\E$  be a complete nest in a separable Hilbert space $\h$. Then $\Alg\E$ has factorization in $\bh$ if and only if $\E$ is countable.
\end{theorem}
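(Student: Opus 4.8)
The plan is to prove the two implications separately, treating ``$\E$ countable $\Rightarrow$ factorization'' as the substantive half and the converse as a reduction to a single continuous nest. For sufficiency, suppose $\E$ is countable and let $A\in\bh$ be positive and invertible, say $mI\le A\le MI$ with $m>0$. Enumerate $\E=\{P_k\}_{k\ge 1}$ (with $0,1\in\E$) and let $\E_n$ be the finite chain consisting of $P_1,\dots,P_n$ together with $0$ and $1$; then $\E_n\subseteq\E$, so $\Alg\E\subseteq\Alg\E_n$, and $\Alg\E_n$ is, relative to the finite block decomposition it induces, a block upper triangular algebra. The block Cholesky factorization discussed in the introduction (valid for finitely many, possibly infinite-dimensional, blocks) yields for each $n$ a factorization $A=a_n^*a_n$ with $a_n$ invertible in $\Alg\E_n$ and with positive diagonal blocks; normalizing the diagonal blocks makes $a_n$ unique. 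Since $a_n^*a_n=A$ we get the uniform bounds $\|a_n\|=\|A\|^{1/2}\le M^{1/2}$ and $\|a_n^{-1}\|=\|A^{-1}\|^{1/2}\le m^{-1/2}$, so $\{a_n\}$ and $\{a_n^{-1}\}$ are uniformly bounded.

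The heart of the argument is to pass from the finite factors to a factor adapted to all of $\E$. As $\E_n\uparrow\E$, the uniqueness of the normalized factors makes $\{a_n\}$ a coherent system along this filtration; combined with the uniform bounds, this forces $a_n$ and $a_n^{-1}$ to converge in the strong operator topology, together with their adjoints, to operators $a$ and $b$ with $\|a\|\le M^{1/2}$ and $\|b\|\le m^{-1/2}$. On bounded sets the product is jointly strongly continuous, so $a^*a=\lim a_n^*a_n=A$ and $ab=ba=\lim a_na_n^{-1}=I$. Invariance under each $P_k$ passes to the limit (for $m\ge k$ one has $a_mP_k=P_ka_mP_k$), whence $a\in\Alg\E$; and since $\Alg\E$ is closed in the weak operator topology, $b=a^{-1}\in\Alg\E$. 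This gives the factorization, and it is exactly here that countability is indispensable: it produces an exhausting sequence $\E_n\uparrow\E$ along which the factors converge, something no countable family can do when $\E$ is uncountable.

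For necessity I argue the contrapositive. On a separable space the atoms $P-P_-$ of $\E$, where $P_-=\vee\{Q\in\E:Q<P\}$, are mutually orthogonal nonzero projections and hence countable; so if $\E$ is uncountable it has a nonatomic portion, and one extracts $P\le Q$ in $\E$ for which the compressed nest $\E_{PQ}=\{R-P:P\le R\le Q,\ R\in\E\}$ on $\ran(Q-P)$ is continuous, and thus unitarily equivalent to a standard continuous (Volterra) nest on an $L^2$-space. Factorization transfers to such interval compressions: applying it to an operator that is the identity off $\ran(Q-P)$ and a prescribed positive invertible $A'$ on $\ran(Q-P)$, the factor $a$ is block upper triangular for $\{P,Q-P,1-Q\}$, and its middle diagonal block lies in $\Alg\E_{PQ}$ and factors $A'$ there. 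It therefore suffices to show a continuous nest has no factorization, and here the obstruction is the classical unboundedness of the triangular truncation on a continuous nest: one produces a positive invertible operator (for instance the exponential of a suitable Hermitian Hilbert--Schmidt perturbation adapted to the nest) whose only formal triangular factor is unbounded, so it cannot be written as $a^*a$ with $a,a^{-1}\in\Alg\E$.

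The main obstacle is the convergence step in the sufficiency direction. The uniform norm bounds hand over weak-operator subconvergence for free, but isolating a genuine limit that is boundedly invertible and actually lies in $\Alg\E$ requires the coherence of the normalized factors along $\{\E_n\}$ to upgrade this to strong convergence of $a_n$, $a_n^*$ and $a_n^{-1}$; this is precisely the point where countability enters. On the necessity side the delicate ingredient is the explicit non-factorable positive operator for the continuous nest and the verification that no bounded factor can exist.
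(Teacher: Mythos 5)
First, a point of comparison: the paper does not prove this statement at all --- it is quoted as Theorem 4.7 of Larson \cite{La}, a deep result whose proof occupies a substantial part of that Annals paper. So your proposal is not being measured against a short argument you failed to find; you are attempting to reprove a hard theorem, and both halves of your attempt have genuine gaps. In the sufficiency direction, the entire difficulty is concentrated in the sentence claiming that uniqueness of the normalized finite Cholesky factors plus uniform bounds ``forces'' SOT convergence of $a_n$, $a_n^*$ and $a_n^{-1}$. No such coherence holds in general. What uniqueness of positive-diagonal factors actually gives is this: if $P\in\E_n\cap\E_m$ and $\E_n\cap[0,P]=\E_m\cap[0,P]$, then $a_nP=a_mP$ (because $PAP=(Pa_nP)^*(Pa_nP)$ restricted to $\mathscr{R}(P)$ is the normalized factor of the corner). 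Consequently $a_nP$ stabilizes only when the enumeration eventually exhausts $\E\cap[0,P]$, which is possible for all $P$ simultaneously only when the nest is essentially of order type $\omega$. A general countable complete nest can have limit projections approached from below by infinitely many nest elements (indeed arbitrary countable complete order types occur), and for such $P$ the truncations $\E_n\cap[0,P]$ never stabilize; refining a finite nest below $P$ genuinely changes the factor there (already for $2\times2$ versus $3\times3$ blocks the normalized factors differ), so nothing in your argument produces a convergent sequence. This convergence problem is precisely what Larson's proof is designed to overcome, and his route goes through nest similarity and approximate unitary equivalence machinery (Andersen-type theorems), not through limits of finite factorizations.

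The necessity direction contains an outright false reduction. You claim an uncountable complete nest must have an interval $[P,Q]$ on which the compressed nest is continuous. The paper's own Example 2.10 refutes this: take atoms $\{e_q\}_{q\in\mathbb{Q}}$ ordered like the rationals on $\ell^2(\mathbb{Q})$ and let $\E$ be the completion, i.e.\ all projections $\sum_{q\in c}e_q$ over Dedekind cuts $c$. This nest is complete, uncountable, \emph{atomic}, and every nontrivial interval of it contains atoms, so no compression $\E_{PQ}$ is continuous; your argument says nothing about such nests, yet they are exactly the ones the paper needs the theorem for (to conclude $\Alg\mathcal{F}$ has no factorization there). Moreover, even in the genuinely continuous case, the final step --- that a continuous nest admits a positive invertible operator with no bounded factorization ``because triangular truncation is unbounded'' --- is an appeal to a slogan, not a proof: unboundedness of triangular truncation does not by itself preclude every factorization $A=a^*a$ with $a,a^{-1}\in\Alg\E$ (the factor need not be any ``formal truncation'' of anything), and constructing such a non-factorable operator was itself a notable achievement of Larson, answering a question going back to Gohberg and Krein. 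In short: the easy parts of your outline (finite block Cholesky, the norm identities $\|a_n\|=\|A\|^{1/2}$, $\|a_n^{-1}\|=\|A^{-1}\|^{1/2}$) are fine, but every step carrying real weight is either unjustified or false as stated.
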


The following are some examples of algebras having factorization in noncommutative  von Neumann algebras.

\begin{example}(Nest subalgebras) As already mentioned above, $\Alg\E$ has factorization in $\bh$ for any countable complete nest $\E$ in $\bh$. More generally, Pitts proved that if $\E$ is a complete nest in a factor $\M$, then $\Alg_\M\E$ has factorization in $\M$ if and only if ``certain" subnest $\E_r$ of $\E$ is countable (Theorem 6.4, \cite{Pi}).

Moreover, if $\E$ is a nest (not necessarily countable) in a finite von Neumann algebra $\M$ (not necessarily a factor), then $\Alg_\M\E$ has factorization in $\M$ (Corollary 5.11, \cite{Pi}).
\end{example}

\begin{example}(Subdiagonal algebras)  Let $\A$ be a subalgebra of  a  von Neumann algebra  $\M$, and let  $\phi:\M\to\M$ be a faithful normal expectation (i.e. $\phi$ is positive, $\phi(1)=1$ and $\phi\circ\phi=\phi$). Then $\A$ is called a {\em  subdiagonal algebra} with respect to $\phi$ if it satisfies: $(i)$ $\A+\A^*$ is $\sigma$-weakly dense in $\M$, $(ii)$ $\phi(ab)=\phi(a)\phi(b)$ for all $a,b\in\A$, and $(iii)$ $\phi(\A)\subseteq \A\cap\A^*$. Moreover, if the von Neumann algebra $\M$ is finite with a distinguished trace $\tau$, then the subdiagonal algebra $\A$ is called {\em finite} if  $\tau\circ\phi=\tau$. Arveson proved that if $\A$ is a maximal (with respect to $\phi$) finite subdiagonal algebra of $\M$, then $\A$ has factorization in $\M$ (Theorem 4.2.1, \cite{Ar1}). A nest subalgebra of a finite von Neumann algebra  is an example of maximal finite subdiagonal algebras (Corollary 3.1.2, \cite{Ar1}). See  \ref{an example of a subdiagonal algebra} for another concrete example of a finite subdiagonal algebra.

There are other subdiagonal algebras (not necessarily finite) as well, which are known to have factorization. For example, all  subdiagonal algebras arising out of periodic flow have factorization. See \cite{So} for more details of these notions and Corollary 3.11 therein.
\end{example}

We believe that some known facts about  subdiagonal algebras can also be deduced from our result. One such is Theorem 5.1 in \cite{McMuSa}, which follows directly from Corollary \ref{lattices of algebras with factorization}. But we have not explored other possible consequences in depth.

Below we have some concrete examples of nest algebras which do not have factorization.

\begin{example}
Let $\E$ be the nest $\{p_t; t\in [0,1]\}$ of projections on $L^2([0,1])$, where $p_t$ denotes the projection onto $L^2([0,t])$, considered as subspace of $L^2([0,1])$. Then $\E$ is complete and uncountable; hence $\Alg\E$ does not have factorization in $\B(L^2([0,1]))$ by Theorem \ref{Larson's result}. Additionally, let $\mathcal{F}=\{p_i; i\in\mathbb{Q}\}$ be the nest of projections on $\ell^2(\mathbb{Q})$, where $p_i$ denotes the projection onto the subspace $\overline{\Span}\{e_j; j\leq i\}$ for the canonical basis $\{e_i; i\in\mathbb{Q}\}$ of $\ell^2(\mathbb{Q})$. Although $\mathcal{F}$ is a countable nest, it is easy to verify that its completion is not countable (actually indexed by $\R\sqcup\mathbb{Q}$) and hence $\Alg\mathcal{F}$ does not have factorization in $\B(\ell^2(\mathbb{Q}))$. At this point, we do not know whether these algebras are logmodular.
\end{example}

We now state the main result of this paper. This gives us the understanding of behaviour of the lattices of  logmodular algebras.

\begin{theorem}\label{lattices of logmodular algebras}
Let $\A$ be a logmodular algebra in a von Neumann algebra $\M$. Then $\Lat_\M\A$ is a commutative subspace lattice. Moreover if $\M$  is a factor, then $\Lat_\M\A$ is a nest.
\end{theorem}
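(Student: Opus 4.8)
The plan is to reduce, by compression to invariant corners, to a single sharply posed configuration of two invariant projections, and then to exhibit a positive invertible element of $\M$ that cannot be approximated by elements of the form $a^*a$ with $a\in\A^{-1}$, contradicting logmodularity.

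First I would record the elementary facts about a single $p\in\Lat_\M\A$. If $a\in\A^{-1}$, then $a$ and $a^{-1}$ both leave $\ran p$ invariant, so $a\,\ran p=\ran p$; writing $a$ in $2\times 2$ block form with respect to $p$ it is block upper triangular, $a=\begin{pmatrix}\alpha&\beta\\0&\delta\end{pmatrix}$, with $\alpha,\delta$ invertible, and dually $a^*$ leaves $\ran p^\perp$ invariant. From $p(ab)p=(pap)(pbp)$ the compression $a\mapsto pap$ is a unital homomorphism of $\A$ onto a subalgebra $\A_p\subseteq p\M p$; since $h\mapsto php$ maps $\M_+^{-1}$ continuously \emph{onto} $(p\M p)_+^{-1}$, the image $\A_p$ is logmodular in $p\M p$. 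I would also use that $\A^*$ is logmodular (the earlier Proposition) together with the identity $\Lat_\M\A^*=\{1-p:\ p\in\Lat_\M\A\}$.

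Given $p,q\in\Lat_\M\A$, both $p\wedge q$ and $p\vee q$ lie in $\Lat_\M\A$. Compressing to $\ran(p\vee q)$ (legitimate, since $p\vee q\in\Lat_\M\A$) I may assume $p\vee q=1$; applying the same device to $\A^*$ on $\ran((p\wedge q)^\perp)=\ran(p^\perp\vee q^\perp)$ and passing to complements, I may further assume $p\wedge q=0$. Commutativity of the pair is equivalent before and after these reductions, so it suffices to prove: if $\A$ is logmodular in $\M$ and $p,q\in\Lat_\M\A$ satisfy $p\wedge q=0$ and $p\vee q=1$, then $q=1-p$ (which is exactly $pq=qp$ in this situation). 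Geometrically this says $\ran p$ and $\ran q$ sit at no strict angle, and the whole difficulty is to rule out any \emph{generic} part on which they do.

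On such a generic part the pair $(p,q)$ is unitarily $\begin{pmatrix}1&0\\0&0\end{pmatrix}$, $\begin{pmatrix}c^2&cs\\cs&s^2\end{pmatrix}$ on $\mathcal{L}\oplus\mathcal{L}$, with $c=\cos\Theta$, $s=\sin\Theta$ for an angle operator $\Theta$ whose spectrum avoids $0$ and $\pi/2$. Invariance of $\ran p$ makes each $a\in\A$ upper triangular as above, and invariance of $\ran q$ forces the single relation $\alpha c+\beta s=cs^{-1}\delta s$. Computing $a^*a$, its $p$-row $(\,php,\ php^\perp\,)=(\alpha^*\alpha,\ \alpha^*\beta)$ together with the angle rigidly determines the remaining corner $p^\perp(a^*a)p^\perp=\beta^*\beta+\delta^*\delta$: indeed $\delta$ is recovered from $\alpha,\beta$ through the relation, so the admissible values of the Schur complement $\delta^*\delta=(p^\perp(a^*a)^{-1}p^\perp)^{-1}$ are confined by $\Theta$. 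Hence $\{a^*a:a\in\A^{-1}\}$ lies in a proper, non-dense subset of $\M_+^{-1}$, contradicting logmodularity unless the generic part is absent; this yields $q=1-p$, so $\Lat_\M\A$ is a commutative subspace lattice. \textbf{I expect this step to be the main obstacle:} turning the (transparent) scalar rigidity into a rigorous non-density statement for operator $\Theta$ and for a general, possibly small, ambient $\M$, where one must pin down a concrete element of $\M_+^{-1}$ that defeats the constraint.

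Finally, suppose $\M$ is a factor and, for contradiction, $p,q\in\Lat_\M\A$ are incomparable. They already commute, so $e:=p\wedge q^\perp$ and $f:=p^\perp\wedge q$ are nonzero orthogonal projections; after the reduction above (a corner $f\M f$ of a factor is again a factor) the configuration becomes $q=1-p$ with $0\ne p\ne 1$. But then $p$ and $p^\perp$ are both invariant, forcing every $a\in\A$ to be block diagonal ($pap^\perp=0$), whence $p(a^*a)p^\perp=0$ for all $a\in\A^{-1}$. Density of $\{a^*a\}$ would then require $p\M p^\perp=0$ (take $h=1+\varepsilon(x+x^*)\in\M_+^{-1}$ with $0\ne x\in p\M p^\perp$), which is impossible in a factor with $p\ne 0,1$. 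Hence any two elements of $\Lat_\M\A$ are comparable, so $\Lat_\M\A$ is a nest. Paragraphs one, two and four are comparatively routine once the generic-position obstruction of the third paragraph is established.
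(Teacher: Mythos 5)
Your reductions and your factor argument are fine: compressing by $p\vee q\in\Lat_\M\A$ and then, via $\A^*$, by $(p\wedge q)^\perp\in\Lat_\M\A^*$ is legitimate (these are the easy cases of the compression lemma, since each compressing projection is invariant for $\A$ or for $\A^*$), and once commutativity is known, your passage to $q=1-p$ on a corner factor plus the observation that block-diagonality forces $p(a^*a)p^\perp=0$ correctly settles the nest assertion. The genuine gap is in your third paragraph, and it is structural. After your reduction you only have $p\wedge q=0=p^\perp\wedge q^\perp$; this does \emph{not} put $\ran p$ and $\ran q$ in generic position, because $p\wedge q^\perp$ and $p^\perp\wedge q$ can still be nonzero. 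These two projections are in general invariant under neither $\A$ nor $\A^*$ (one factor of each meet is an invariant projection for $\A$, the other for $\A^*$), so no compression device removes them; nor is the projection onto the generic part itself in $\Lat_\M\A$ (an operator leaving $\ran p$ and $\ran q$ invariant need not leave the generic subspace invariant). Consequently your $2\times 2$ picture --- each $a\in\A$ upper triangular with entries $\alpha,\beta,\delta$ tied to the angle operator by a single relation --- is only valid when the generic part is the whole space. In the mixed case the ambient space has four summands $\ran(p\wedge q^\perp)\oplus\K\oplus\K\oplus\ran(p^\perp\wedge q)$, elements of $\A$ have a $4\times 4$ block form, and both the constraint and the approximation argument must be run in that setting; this is precisely why the paper's proof of the theorem is not a one-line reduction to its generic-position lemma but a separate, longer computation (extracting invertibility of the relevant diagonal block $d_n$ from the $(2,2)$ entries of $S_nS_n^{-1}$ and $S_n^{-1}S_n$, a uniform bound $e_ne_n^*\le 2$, etc.).

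Second, even in the purely generic case, the step you flag as ``the main obstacle'' is in fact the entire content of the proof: identifying the relation $s\alpha c+s\beta s=c\delta s$ is the easy part, and non-density does not follow from saying the Schur complement is ``confined.'' One must exhibit concrete positive invertible witnesses and derive a quantitative contradiction. The paper takes $Z_\alpha=\begin{bmatrix}1&\alpha\\ \alpha&\alpha^2+1\end{bmatrix}$ for $\alpha\ge 1$ (checking $Z_\alpha\in\M$ via the polar decomposition of the off-diagonal part of $q$), and shows that any sequence $a_n^*a_n\to Z_\alpha$ with $a_n\in\A^{-1}$ forces $\lim_n\alpha_n\alpha_n^*=1$, $\lim_n(\beta_n-\alpha\,\alpha_n)=0$ and $\lim_n\delta_n\delta_n^*=1$; feeding these into the relation, with $u=c+\alpha s$ invertible and $s^2u^{-2}\le \alpha^{-2}$, gives $s^2\le \alpha^{-2}c^2$, and letting $\alpha\to\infty$ yields $s=0$, contradicting genericity. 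Without this limiting construction, and without the $4\times 4$ extension handling nonzero $p\wedge q^\perp$ and $p^\perp\wedge q$, your proposal identifies the right obstruction but does not yet prove the theorem.
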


We postpone the proof of Theorem \ref{lattices of logmodular algebras} to the next section, and instead look at some of its consequences first. Note that since any algebra having factorization  is also logmodular, the following corollary is immediate.

\begin{corollary}\label{lattices of algebras with factorization}
Let an algebra $\A$ have factorization   in a von Neumann algebra $\M$. Then $\Lat_\M\A$ is a commutative subspace lattice. Moreover if $\M$ is a factor, then $\Lat_\M\A$ is a nest.
\end{corollary}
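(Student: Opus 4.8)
The plan is to establish the commutativity assertion first, so that $\Lat_\M\A$ is a CSL, and then to upgrade to a nest when $\M$ is a factor. The organizing device is that logmodularity is inherited by compressions to invariant projections. Indeed, if $p\in\Lat_\M\A$ then $ap=pap$ for every $a\in\A$, so $pa^{*}ap=(ap)^{*}(ap)=(pap)^{*}(pap)$, and one checks that $pap$ is invertible in the corner $p\M p$ with inverse $pa^{-1}p$; feeding a positive invertible of the form $h\oplus p^{\perp}$ into the density hypothesis then shows $p\A p$ is logmodular in $p\M p$. Dually, since $p^{\perp}\in\Lat_\M\A^{*}$ and $\A^{*}$ is logmodular (recall $\A$ is logmodular in $\M$ if and only if $\A^{*}$ is), the same argument applied to $\A^{*}$ and then adjointed shows $p^{\perp}\A p^{\perp}$ is logmodular in $p^{\perp}\M p^{\perp}$. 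Composing a top compression by $f$ with a bottom compression by $r$ yields the key lemma I would use repeatedly: for $e\le f$ in $\Lat_\M\A$ the interval compression $r\A r$, with $r=f-e$, is logmodular in $r\M r$.

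Given $p,q\in\Lat_\M\A$ I would reduce to the essential configuration. Put $e=p\wedge q$ and $f=p\vee q$ (both in $\Lat_\M\A$), set $r=f-e$, and pass to the logmodular algebra $r\A r$ in $r\M r$. The projections $p'=pr=p-e$ and $q'=qr=q-e$ are invariant for $r\A r$, and a short lattice computation gives $p'\wedge q'=0$ and $p'\vee q'=r$. Since $[p,q]=[p',q']$, it suffices to treat the case $p\wedge q=0$, $p\vee q=1$; moreover a corner of a factor is again a factor, so this reduction is compatible with the factor hypothesis.

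The heart of the matter is an operator inequality forced by invariance. For $a\in\A^{-1}$ and $b=a^{*}a$, invertibility of $a$ together with invariance gives $\overline{\ran(aq)}=a(\ran q)=\ran q$, so the projection onto $\overline{\ran(b^{1/2}q)}$ is exactly $q$, and a short computation (with the inverse taken in $q\M q$) yields
\begin{equation*}
pbq\,(qbq)^{-1}\,qbp=(ap)^{*}q(ap)\le\|pq\|^{2}\,pbp .
\end{equation*}
Since $b\mapsto pbq(qbq)^{-1}qbp$ is norm continuous on $\M_{+}^{-1}$ and $\{a^{*}a:a\in\A^{-1}\}$ is dense there, this inequality persists for \emph{every} positive invertible $b$. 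Testing it against positive invertibles that nearly align $b^{1/2}(\ran p)$ with $b^{1/2}(\ran q)$ forces $\|pq\|\in\{0,1\}$ whenever $pq\neq0$, and running the argument for $\A^{*}$ gives $\|p^{\perp}q^{\perp}\|\in\{0,1\}$. In the reduced situation this leaves exactly two possibilities: either $pq=0$, in which case $p,q$ are orthogonal and complementary and hence commute, or $\|pq\|=1$ with $[p,q]\neq0$. By two-projection theory the second possibility is precisely the presence of a generic part whose angle operator accumulates at $0$, and this is the main obstacle I expect: the generic-part projection lies in $\{p,q\}''\subseteq\M$ but need not belong to $\Lat_\M\A$, so one cannot simply compress $\A$ onto it and invoke the inequality with a strict constant. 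I would dispose of it by a finer spectral/limiting argument on the angle operator, testing logmodularity against positive invertibles adapted to spectral slices where the angle is bounded away from $0$; once this is done, $\Lat_\M\A$ is a lattice of commuting projections, i.e. a CSL. (It is worth recording the clean fact $\Lat_\M\A\subseteq(\A\cap\A^{*})'$, since for $d\in\A\cap\A^{*}$ and $p\in\Lat_\M\A$ both $dp=pdp$ and $pd=pdp$ hold.)

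Finally, assuming $\M$ is a factor I would upgrade commutativity to comparability. If $p,q\in\Lat_\M\A$ commute but are incomparable, then on the interval corner $r=(p\vee q)-(p\wedge q)$ the projections $p'=p-p\wedge q$ and $q'=r-p'$ are nonzero, orthogonal and complementary, and both are invariant for the logmodular algebra $r\A r$. Invariance under $p'$ and under $r-p'$ forces $x=p'xp'+(r-p')x(r-p')$ for every $x\in r\A r$, so $\{x^{*}x:x\in(r\A r)^{-1}\}$ consists of block-diagonal positives. But $r\M r$ is again a factor, so $p'\M(r-p')\neq0$; choosing $0\neq w\in p'\M(r-p')$, the positive invertible $r+t(w+w^{*})$ has off-diagonal block $tw\neq0$ and therefore lies at distance at least $t\|w\|$ from every block-diagonal element, contradicting logmodularity of $r\A r$. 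Hence no incomparable pair exists and $\Lat_\M\A$ is totally ordered, i.e. a nest. The factor step is robust once commutativity is in hand; the one genuinely delicate point throughout remains the degenerate accumulating-angle case flagged above.
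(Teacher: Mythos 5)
Your proposal is, in effect, a blind re-proof of Theorem \ref{lattices of logmodular algebras} (the paper deduces the corollary from it in one line, since factorization implies logmodularity), and much of it is sound and parallel to the paper: your interval-compression lemma is part (3) of Proposition \ref{logmodularity  is preserved on compression}; your reduction via $e=p\wedge q$, $f=p\vee q$ is exactly the paper's, since your $r=f-e$ is the complement of the projection $(p\wedge q)\vee(p^{\perp}\wedge q^{\perp})$ of Lemma \ref{M_2 and N_2 has the logmodularity properties}; and your factor step is a correct variant of Lemmas \ref{subspaces with logmodularity properties cannot be orthogonal} and \ref{the main result for factors}, with $p'\M q'\neq\{0\}$ replacing the comparison-theory partial isometry. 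The identity $pbq(qbq)^{-1}qbp=(ap)^{*}q(ap)$ for $b=a^{*}a$, $a\in\A^{-1}$, and the resulting inequality with constant $\|pq\|^{2}$, are also correct --- although your justification is not: the projection onto $\overline{\ran(b^{1/2}q)}$ is in general \emph{not} $q$ (writing $a=u|a|$, the operator $b^{1/2}=|a|$ maps $\ran(q)$ onto $u^{*}\ran(q)$); the identity follows instead from invertibility of $qaq$ in $q\M q$ with inverse $qa^{-1}q$. The genuine gap is exactly the point you flag and then defer: the case $\|pq\|=1$ with $[p,q]\neq0$. This is not a removable corner case but the crux of the theorem in infinite dimensions: after your reduction it is a nonzero generic part whose Halmos operators $x,y$ ($x^{2}+y^{2}=1$, $\ker x=\ker y=0$) satisfy $\|x\|=1$ with $1$ not an eigenvalue --- impossible in finite dimensions (so your argument would settle the $M_n$ case), but routine in $\bh$, e.g. $x=$ multiplication by $t$ and $y=\sqrt{1-t^{2}}$ on $L^{2}([1/2,1])$.

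Your framework cannot close this case. Writing $c=b^{1/2}$ and $E_{b}=cq(qbq)^{-1}qc$, one checks that $E_{b}$ is the orthogonal projection onto $\overline{c\,\ran(q)}$, so
\[
pbq(qbq)^{-1}qbp=pcE_{b}cp\le pbp
\]
holds for \emph{every} positive invertible $b$; when $\|pq\|^{2}=1$ your inequality is exactly this triviality, so it carries no information. The proposed repair by spectral slices fails for two structural reasons. First, a spectral projection $\tilde e$ of the angle operator lies in $\{p,q\}''\subseteq\M$ but not in $\Lat_\M\A$, so neither the compression lemma nor the derivation of the inequality applies to $p\tilde e,q\tilde e$: that derivation uses $ap=pap$ and $aq=qaq$ for $a\in\A^{-1}$, which fail for the sliced projections. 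Second, localizing the test element $b$ does not help: the exact identity is $pbq(qbq)^{-1}qbp=(pap)^{*}(pqp)(pap)$, whose right side depends on $a$ and not only on $b=a^{*}a$; to get something depending on $b$ alone (and hence surviving the density limit that logmodularity provides) you must replace $pqp$ by its norm, and that is precisely where the information dies --- a $b$ supported on a spectral slice still sits against the global constant $\|pq\|^{2}=1$. This is why the paper never reduces to a scalar inequality: in Lemma \ref{subspaces with logmodularity properties cannot be in generic position} and in the proof of Theorem \ref{lattices of logmodular algebras} it keeps the whole factorization sequence $Z=\lim_{n}S_{n}^{*}S_{n}$, tracks the matrix entries of $S_{n}$ and $S_{n}^{-1}$ (invertibility of the diagonal corners, $\lim_{n}d_{n}d_{n}^{*}=1$, $e_{n}e_{n}^{*}\le2$, etc.), and extracts the \emph{operator} inequality $y^{2}\le\frac{2}{\alpha^{2}}x^{2}$, uniform in $\alpha$, which retains full strength when $\|x\|=1$ and forces $y=0$ as $\alpha\to\infty$. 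Repairing your proof essentially requires rebuilding that operator-valued limit argument; the norm inequality alone cannot do it.
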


\begin{remark}
If $\M$ is an arbitrary von Neumann algebra which is not a factor, and $\A$ is  a subalgebra of $\M$, then we can never expect $\Lat_\M\A$ to be a nest irrespective of whether $\A$ is logmodular or has factorization.  In fact if $\mathcal{P}_{\mathcal{Z}}$  denotes the lattice of all projections in the center $\mathcal{Z}$ of $\M$, then it is always true that $\mathcal{P}_{\mathcal{Z}}\subseteq \Lat_\M\A$.  So $\Lat_\M\A$ can never be a nest if the center $\mathcal{Z}$ is non-trivial.
\end{remark}

Now let $\M$ be a factor, and let $\A$ be an $\M$-reflexive subalgebra of $\M$. If $\A$ is logmodular in $\M$, then $\Lat_\M\A$ is a nest by Theorem \ref{lattices of logmodular algebras}. But since $\A=\Alg_\M\Lat_\M\A$, it follows that $\A$ is a nest subalgebra of $\M$.

We now answer an open question posed by Paulsen and Raghupathi (see pg. 2630,  \cite{PaRa}) using above observation. They conjectured that every completely distributive CSL  logmodular algebra in $\bh$ is a nest algebra. Here a (completely distributive) CSL algebra means an algebra of the form $\Alg\E$, where $\E$ is a (completely distributive) commutative subspace lattice (see \cite{Da} for more on completely distributive CSL algebras). Note that all nests are completely distributive. Since any CSL algebra is a special case of reflexive algebras, we have thus answered  their question  in affirmative. We record it below.

\begin{corollary}
An $\M$-reflexive logmodular algebra in a factor $\M$ is a nest subalgebra of $\M$. In particular, all reflexive (hence completely distributive CSL) logmodular algebras in $\bh$ are nest algebras.
\end{corollary}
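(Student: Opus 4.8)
The plan is to derive this corollary directly from Theorem \ref{lattices of logmodular algebras}, which we are free to invoke. The entire content is the observation that reflexivity lets us reconstruct $\A$ from its invariant projection lattice, while the theorem guarantees that lattice is a nest in the factor case. Concretely, suppose $\A$ is an $\M$-reflexive logmodular algebra in a factor $\M$. By the definition of $\M$-reflexivity we have $\A=\Alg_\M\Lat_\M\A$. Since $\A$ is logmodular in $\M$ and $\M$ is a factor, Theorem \ref{lattices of logmodular algebras} tells us that $\Lat_\M\A$ is a nest. Writing $\E=\Lat_\M\A$, we thus have $\A=\Alg_\M\E$ for a nest $\E$ in $\M$, which is precisely the definition of a nest subalgebra of $\M$. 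This settles the first assertion.

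For the ``in particular'' clause I would specialize to $\M=\bh$. The first step is to record that $\bh$ is a factor: its center is $\bh\cap\bh'=\bh\cap\C 1=\C 1$, so $\M\cap\M'$ is trivial and $\bh$ is a (type I) factor. Next, when $\M=\bh$ the notions of $\M$-reflexivity and reflexivity coincide, since $\Lat_\M\A=\Lat\A$ and $\Alg_\M\E=\Alg\E$ in this case. Hence a reflexive logmodular algebra $\A$ in $\bh$ is $\bh$-reflexive, and the first part applies with $\M=\bh$ to give $\A=\Alg\E$ for a nest $\E=\Lat\A$; that is, $\A$ is a nest algebra.

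Finally I would close the chain down to the completely distributive CSL case quoted from \cite{PaRa}. The point is that every CSL algebra is reflexive: any algebra of the form $\Alg_\M\E$ is automatically $\M$-reflexive (as noted in the definitions preceding the statement), and a CSL algebra is by definition $\Alg\E$ for a commutative subspace lattice $\E$, hence reflexive. Completely distributive CSL algebras are a subclass of these, so the conclusion that a completely distributive CSL logmodular algebra in $\bh$ is a nest algebra follows from the reflexive case already established. There is no genuine obstacle here—the proof is a matter of unwinding the definitions of $\M$-reflexivity and nest subalgebra and then applying Theorem \ref{lattices of logmodular algebras}. The only points requiring a word of justification are the two standard facts just used: that $\bh$ is a factor, and that every CSL algebra is reflexive.
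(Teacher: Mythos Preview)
Your proposal is correct and follows essentially the same argument as the paper: apply Theorem \ref{lattices of logmodular algebras} to conclude $\Lat_\M\A$ is a nest, then use $\M$-reflexivity to write $\A=\Alg_\M\Lat_\M\A$ as a nest subalgebra, and specialize to $\M=\bh$ for the ``in particular'' clause. The paper presents this reasoning in the paragraph immediately preceding the corollary, and your additional justifications (that $\bh$ is a factor and that CSL algebras are reflexive) are exactly the two standard facts the paper takes for granted.
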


If an algebra $\A$ has factorization in $\bh$, then $\Alg\Lat\A$ also has factorization in $\bh$ as $\A$ is contained in $\Alg\Lat\A$. Since $\Lat\A$ is a complete nest,  it then follows from Theorem \ref{Larson's result} that $\Lat\A$ is a countable nest. In particular, if $\A=\Alg\E$ for a  lattice $\E$ of projections in $\h$, then $\E$ is a countable nest because $\E\subseteq\Lat\A$. Thus we get the following corollary, which  is  a  strengthening of Theorem \ref{Larson's result} of Larson.

\begin{corollary}\label{strenghtening of Larson result}
Let $\E$ be a complete lattice of projections on a separable  Hilbert space $\h$. Then $\Alg\E$ has factorization in $\bh$ if and only if $\E$ is a countable nest.
\end{corollary}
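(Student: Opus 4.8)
The plan is to prove the two implications separately. The reverse implication will follow immediately from Larson's Theorem \ref{Larson's result}, while the forward implication is where the new input of this paper, namely Corollary \ref{lattices of algebras with factorization}, is used; the key point to keep in mind throughout is that $\bh$ is a (type I) factor, so that the ``nest'' half of that corollary is available rather than only the weaker CSL conclusion.

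For the ``if'' direction, suppose the complete lattice $\E$ is a countable nest. Being a complete lattice that is in addition totally ordered, $\E$ is a complete nest, and it is countable by assumption, so Theorem \ref{Larson's result} applies verbatim and gives that $\Alg\E$ has factorization in $\bh$. Nothing further is needed here, and this is the only place where completeness of $\E$ is genuinely used.

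For the ``only if'' direction, set $\A=\Alg\E$ and assume $\A$ has factorization in $\bh$. First I would invoke that $\bh$ is a factor, so Corollary \ref{lattices of algebras with factorization} guarantees that $\Lat\A=\Lat\Alg\E$ is a nest. Since $\E\subseteq\Lat\Alg\E$ always holds (by definition every projection of $\E$ is left invariant by every operator of $\Alg\E$), the lattice $\E$ sits inside the totally ordered family $\Lat\A$ and is therefore itself a nest. To obtain countability I would use the elementary monotonicity of the factorization property: enlarging an algebra with factorization preserves factorization, because an element invertible within the smaller algebra stays invertible within the larger one (so that $\A^{-1}\subseteq(\Alg\Lat\A)^{-1}$), whence the set of elements $a^*a$ can only grow while remaining inside the positive invertibles of $\bh$, which $\A$ already exhausts. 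Applying this to the inclusion $\A=\Alg\E\subseteq\Alg\Lat\A$ shows that $\Alg\Lat\A$ has factorization in $\bh$. Now $\Lat\A$ is always closed under arbitrary joins and meets and contains $0$ and $1$, so it is a complete nest; Theorem \ref{Larson's result} then forces $\Lat\A$ to be countable. Finally $\E\subseteq\Lat\A$ yields that $\E$ is countable, so $\E$ is a countable nest, as required.

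Since every substantive ingredient is already in hand, I do not expect a serious obstacle. The only points that require a little care are the observation that $\bh$ is genuinely a factor, so that the ``nest'' conclusion of Corollary \ref{lattices of algebras with factorization} applies, and the verification of the monotonicity of factorization under enlargement of the algebra. Both are routine, and all of the real mathematical weight is carried by Theorem \ref{lattices of logmodular algebras} and by Larson's Theorem \ref{Larson's result}, which I am free to assume.
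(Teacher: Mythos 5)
Your proof is correct and follows essentially the same route as the paper: the ``if'' direction is Larson's Theorem \ref{Larson's result}, and the ``only if'' direction uses monotonicity of factorization to pass from $\Alg\E$ to $\Alg\Lat\Alg\E$, invokes Corollary \ref{lattices of algebras with factorization} (with $\bh$ a factor) to see that $\Lat\Alg\E$ is a complete nest, applies Larson's Theorem to get countability, and finally uses $\E\subseteq\Lat\Alg\E$. Your explicit verification that $\A^{-1}\subseteq(\Alg\Lat\A)^{-1}$ spells out a step the paper leaves implicit, but the argument is the same.
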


\section{Proof of the main result}

This section is devoted to the proof of our main result (Theorem \ref{lattices of logmodular algebras}). We first discuss some general ingredients required for this. A simple observation that we shall be using throughout the article is the following remark. Recall that  $p^\perp$ denotes the projection $1-p$ for any projection $p$.

\begin{remark}
For any subalgebra $\A$ of a von Neumann algebra $\M$,  $p\in\Lat_\M\A \iff ap=pap\;\; \forall a\in\A\iff pa^*=pa^*p\; \forall a\in\A\iff a^*p^\perp=p^\perp a^* p^\perp\;\;\forall a\in\A\iff  p^\perp\in\Lat_\M\A^*$.
\end{remark}

The following proposition says that logmodularity and factorization are preserved under  compression of algebras by appropriates projections. Here $p\A p$ denotes the subspace $\{pap; a\in\A\}$ for any projection $p$ and an algebra $\A$. Note that $p\A p$ need not always be an algebra.

\begin{proposition}\label{logmodularity  is preserved on compression}
Let an algebra $\A$ have logmodularity (resp. factorization) in a von Neumann algebra $\M$, and let $p,q\in \Lat_\M\A$  be such that $p\geq q$. Then  the following statements are true:
\begin{enumerate}
    \item $p\A p   (=\A p)$ has  logmodularity (resp. factorization)  in  $p\M p$.
    \item $p^\perp\A p^\perp$ has  logmodularity (resp. factorization) in $p^\perp\M p^\perp$.
    \item  $(p-q)\A (p-q)$ has  logmodularity (resp. factorization)  in $(p-q)\M (p-q)$.
\end{enumerate}
\end{proposition}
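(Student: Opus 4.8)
The plan is to exploit the block upper-triangular form forced by an invariant projection, and to reduce the factorization of elements of a corner to factorizations in the whole algebra by padding with the complementary identity. Recall that $p\in\Lat_\M\A$ means exactly $ap=pap$ for all $a\in\A$, i.e. $p^\perp a p=0$, so every $a\in\A$ is upper triangular with respect to $1=p+p^\perp$. First I would record the two elementary facts that drive everything. (i) For $a\in\A$ one has $p\,a^*a\,p=(ap)^*(ap)=(pap)^*(pap)$, since $ap=pap$; thus compressing $a^*a$ by $p$ exactly recovers $(pap)^*(pap)$, with no contribution from the off-diagonal blocks. (ii) If moreover $a\in\A^{-1}$, then $pap$ is invertible in the corner $p\M p$ with inverse $pa^{-1}p\in p\A p$: as $a^{-1}\in\A$ is again upper triangular, $ap=pap$ and $a^{-1}p=pa^{-1}p$ give $(pap)(pa^{-1}p)=p\,aa^{-1}\,p=p$ and likewise on the other side, so $pap\in(p\A p)^{-1}$. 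Along the way one checks that $p\A p=\A p$ is a (norm-closed) subalgebra of $p\M p$, which is routine bookkeeping using $p\in\Lat_\M\A$.

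For part (1), let $y$ be positive and invertible in $p\M p$, so $y\ge\varepsilon p$ for some $\varepsilon>0$. Then $y+p^\perp$ is positive and invertible in $\M$, since $y+p^\perp\ge\min(\varepsilon,1)\,1>0$. By logmodularity of $\A$ there are $a_n\in\A^{-1}$ with $a_n^*a_n\to y+p^\perp$ in norm. Compressing by $p$, a norm-continuous operation, and using fact (i) yields $(pa_np)^*(pa_np)=p\,a_n^*a_n\,p\to p(y+p^\perp)p=y$, where each $pa_np\in(p\A p)^{-1}$ by fact (ii). Since $y$ was arbitrary and the reverse inclusion $\{b^*b:b\in(p\A p)^{-1}\}\subseteq(p\M p)_+^{-1}$ is automatic, $p\A p$ is logmodular in $p\M p$. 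In the factorization case the identical computation with the exact equality $a^*a=y+p^\perp$ gives $(pap)^*(pap)=y$ exactly, so $p\A p$ has factorization in $p\M p$.

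Part (2) I would derive from part (1) by duality rather than repeat the computation. By the preceding Remark, $p\in\Lat_\M\A$ forces $p^\perp\in\Lat_\M\A^*$, and $\A^*$ is logmodular (resp. has factorization) in $\M$ by the symmetry between an algebra and its adjoint (cf. Proposition 4.1 of \cite{BlLa}). Applying part (1) to the pair $(\A^*,p^\perp)$ shows that $p^\perp\A^*p^\perp$ is logmodular (resp. has factorization) in $p^\perp\M p^\perp$. Since $p^\perp\A^*p^\perp=(p^\perp\A p^\perp)^*$, one more appeal to that adjoint symmetry, now inside the von Neumann algebra $p^\perp\M p^\perp$, gives that $p^\perp\A p^\perp$ itself is logmodular (resp. has factorization) in $p^\perp\M p^\perp$.

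Finally, part (3) follows by iterating (1) and (2) inside a corner. Put $e=p-q$, a projection since $q\le p$. By (1), $p\A p$ is logmodular (resp. has factorization) in the von Neumann algebra $p\M p$. A short check shows $q\in\Lat_{p\M p}(p\A p)$: for $a\in\A$, using $qp=pq=q$ and $aq=qaq$ one gets $(pap)q=qaq=q(pap)q$. The complement of $q$ inside $p\M p$ is precisely $p-q=e$, so applying part (2) to the algebra $p\A p$ in $p\M p$ at the projection $q$ gives that $e(p\A p)e$ is logmodular (resp. has factorization) in $e(p\M p)e$; since $ep=pe=e$ these corners collapse to $e(p\A p)e=e\A e$ and $e(p\M p)e=e\M e$, which is the claim. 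I expect the only genuinely delicate points to be the two structural facts (i) and (ii)—the exact collapse of the compressed product to $(pap)^*(pap)$ and the location of the corner inverse inside $p\A p$—together with verifying that the various compressed sets are norm-closed subalgebras; once these are in hand, the lifting-and-compressing scheme of part (1) and the duality and iteration reductions for (2) and (3) are routine.
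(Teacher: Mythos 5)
Your proof is correct, but it takes a genuinely different route from the paper's. The paper proves part (3) directly and then obtains (1) and (2) as the special cases $q=0$ and $(p,q)\mapsto(1,p)$; the cost of that direct route is that the compression of $\tilde{a}_n^*\tilde{a}_n$ by $p-q$ is \emph{not} exactly $a_n^*a_n$: there is a cross term coming from the block $q\tilde{a}_n(p-q)$, and the heart of the paper's argument is a limit computation killing it --- one observes $q\tilde{x}(p-q)=0$, shows $q\tilde{a}_nq$ is invertible in $q\M q$ with bounded inverses $q\tilde{a}_n^{-1}q$, and deduces $\lim_n q\tilde{a}_n(p-q)=0$. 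You avoid this issue entirely by only ever compressing on the side where invariance makes compression exactly multiplicative: for $p\in\Lat_\M\A$ one has $p^\perp ap=0$, so $pa^*ap=(pap)^*(pap)$ with no error term, which gives (1) through exact identities; then (2) follows by applying (1) to the pair $(\A^*,p^\perp)$ and invoking the symmetry of logmodularity/factorization under $\A\mapsto\A^*$ (Proposition 4.1 of \cite{BlLa}, quoted in the paper); and (3) follows by iteration, using $q\in\Lat_{p\M p}(p\A p)$ and the collapse $e(p\A p)e=e\A e$ for $e=p-q$. What each approach buys: yours is more elementary step by step --- every factorization identity is exact, with no boundedness-of-inverses or limit manipulation --- but it leans on the external adjoint-symmetry result and on nested-corner bookkeeping, whereas the paper's single self-contained computation delivers all three statements at once and never needs to pass through $\A^*$. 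Your supporting verifications (that $pap$ is invertible in $p\A p$ with inverse $pa^{-1}p$, that $q\in\Lat_{p\M p}(p\A p)$, and the corner collapse) are exactly the right ones and all hold, so the proposal stands as a complete alternative proof.
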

\begin{proof}
We shall prove only part (3). Part (1)  follows from $(3)$ by taking $q=0$,  and (2) follows from $(3)$ by taking $p=1$ and $q=p$. Also we shall prove only the case of logmodularity. That of factorization follows similarly. So assume that $\A$ is logmodular in $\M$.

First we show that $(p-q)\A(p-q)$ is an algebra. For all $a\in\A$, since $ap=pap$ and $aq=qaq$, we note that
\begin{align*}
    (p-q)aq=(p-q)qaq=0,
\end{align*}
and
\begin{align}\label{pa(p-q)=a(p-q)}
    pa(p-q)=pap-paq=ap-pqaq=ap-qaq=ap-aq=a(p-q).
\end{align}
Combining the two expressions above,  it follows for all $a,b\in\A$ that
\begin{align}\label{eq: eqation give algebra}
    (p-q)a(p-q)b(p-q)=(p-q)apb(p-q)-(p-q)aqb(p-q)=(p-q)ab(p-q),
\end{align}
which shows that $(p-q)\A(p-q)$ is an algebra.

To show that $(p-q)\A(p-q)$ is logmodular in $(p-q)\M(p-q)$,  fix  a positive and invertible element $x$ in $(p-q)\M(p-q)$ and set $\tilde{x}=x+q+p^\perp$. Note that $x=(p-q)\tilde{x}(p-q)$. It is clear that $\tilde{x}$ is positive in $\M$. Since $x$ is positive and invertible in $(p-q)\M(p-q)$, there is some $\alpha\in (0,1)$ such that $x\geq \alpha(p-q)$, from which we get
\begin{align*}
    \tilde{x}=x+q+p^\perp\geq \alpha (p-q)+\alpha q+\alpha p^\perp=\alpha.
\end{align*}
This shows that $\tilde{x}$ is invertible in $\M$. We then use logmodularity of $\A$ in $\M$ to get a sequence $\{\tilde{a}_n\}$ in $\A^{-1}$ such that $\tilde{x}=\lim_n\tilde{a}_n^*\tilde{a}_n$. So for each $n$, we have $\tilde{a}_nq=q\tilde{a}_nq$ and $\tilde{a}_n^{-1}q=q\tilde{a}_n^{-1}q$. It then follows that
\begin{align*}
    (q\tilde{a}_nq)(q\tilde{a}_n^{-1}q)=q\tilde{a}_n\tilde{a}_n^{-1}q=q~~\mbox{ and }~~(q\tilde{a}_n^{-1}q)(q\tilde{a}_nq)=q\tilde{a}_n^{-1}\tilde{a}_nq=q,
\end{align*}
which is to say that $q\tilde{a}_nq$  is invertible in $q\M q$ with $(q\tilde{a}_nq)^{-1} =q\tilde{a}_n^{-1}q\in q\A q$. In particular, since the sequence $\{\tilde{a}_n^{-1}\}$ is bounded (as  $\{(\tilde{a}_n^*\tilde{a}_n)^{-1}\} $ is a convergent sequence), it follows that the sequence $\{(q\tilde{a}_nq)^{-1}\}$ is bounded. Note that $q\tilde{x}(p-q)=0$, and since $q\tilde{a}_n^*=q\tilde{a}_n^*q$ for all $n$, we have
\begin{align*}
    0=q\tilde{x}(p-q)=\lim_n q\tilde{a}_n^*\tilde{a}_n(p-q)=\lim_n(q\tilde{a}_n^*q)(q\tilde{a}_n(p-q)).
\end{align*}
Multiplying to the left of the sequence  by $(q\tilde{a}_n^*q)^{-1}$ yields
\begin{align*}
    \lim_nq \tilde{a}_n(p-q)=0,
\end{align*}
using which and the expression $\tilde{a}_n(p-q)=p\tilde{a}_n(p-q)$ from \eqref{pa(p-q)=a(p-q)}, we get the following:
\begin{align*}
    x&=(p-q)\tilde{x}(p-q)=\lim_n(p-q)\tilde{a}_n^*\tilde{a}_n(p-q)=\lim_n(p-q)\tilde{a}_n^*p\tilde{a}_n(p-q)\\
    &=\lim_n(p-q)\tilde{a}_n^*[q \tilde{a}_n(p-q)]+\lim_n(p-q)\tilde{a}_n^*[(p-q)\tilde{a}_n(p-q)]\\
    &=\lim_n(p-q)\tilde{a}_n^*(p-q)\tilde{a}_n(p-q)=\lim_na_n^*a_n,
\end{align*}
where $a_n=(p-q)\tilde{a}_n(p-q)\in(p-q)\A(p-q)$.
Also for each $n$, we have from \eqref{eq: eqation give algebra} that
\begin{align*}
    p-q= (p-q)\tilde{a}_n^{-1}(p-q)\tilde{a}_n(p-q)=(p-q)\tilde{a}_n(p-q)\tilde{a}_n^{-1}(p-q),
\end{align*}
which shows that $a_n=(p-q)\tilde{a}_n(p-q)$ is invertible with inverse $(p-q)\tilde{a}_n^{-1}(p-q)$ in $(p-q)\A(p-q)$. Thus we get a sequence $\{a_n\}$ of invertible elements with $a_n,a_n^{-1}\in (p-q)\A(p-q)$ for all $n$ such that $x=\lim_na_n^*a_n$. Since $x$ is an arbitrary positive and invertible element, we conclude  that $(p-q)\A(p-q)$ is logmodular in $(p-q)\M(p-q)$. 
\end{proof}

We now recall some basic facts about subspaces in a separable Hilbert space. Following Halmos \cite{Ha}, we say two non-zero subspaces $E$ and $F$ of a Hilbert space are  in {\em generic position} if all the following subspaces
\begin{align*}
    E\cap F, ~E\cap F^\perp, E^\perp\cap F~\mbox{and}~ E^\perp\cap F^\perp
\end{align*}
are zero. We are going to use the following characterization of subspaces in generic position. Recall that $P_E$ denotes the projection onto a subspace $E$. Also recall that $\ker x$ denotes the kernel of any operator $x$.

\begin{lemma}\label{Structure of subspaces in generic position}(Theorem 2, \cite{Ha})
Let $E$ and $F$ be two subspaces in generic position in a Hilbert space $\h$. Then there exist a Hilbert space $\K$, a unitary  $U:\h\to\K\oplus\K$, and commuting positive contractions $x,y\in\bk$ such that $x^2+y^2=1$ and $\ker x=\ker y=0$ and
\begin{align*}
    UP_EU^*=\begin{bmatrix}
    1&0\\
    0&0
    \end{bmatrix}~~\mbox{and}~~UP_FU^*=\begin{bmatrix}
    x^2&xy\\
    xy&y^2
    \end{bmatrix}.
\end{align*}
\end{lemma}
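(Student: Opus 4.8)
The plan is to produce the prescribed normal form by a single change of coordinates, taking $\K = E$ and building the unitary out of the polar decomposition of the off-diagonal block of $P_F$. First I would decompose $\h = E \oplus E^\perp$, so that $P_E$ automatically carries the block form $\begin{bmatrix}1&0\\0&0\end{bmatrix}$, and write the second projection as an operator matrix $P_F = \begin{bmatrix} A & B \\ B^* & D\end{bmatrix}$, where $A = A^* \in \B(E)$, $D = D^* \in \B(E^\perp)$ and $B \colon E^\perp \to E$. Expanding the identities $P_F = P_F^*$ and $P_F^2 = P_F$ yields the block relations $A - A^2 = BB^*$, $D - D^2 = B^*B$, and, crucially, the cross relation $AB + BD = B$.

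Next I would translate the four generic-position hypotheses into spectral statements about $A$ and $D$. A short computation using $\langle Av, v\rangle = \|P_F v\|^2$ for $v \in E$ shows that $E \cap F^\perp = 0$ forces $\ker A = 0$, while $E\cap F = 0$ forces $\ker(1-A) = 0$; symmetrically $E^\perp\cap F^\perp = 0$ and $E^\perp\cap F = 0$ give $\ker D = \ker(1-D) = 0$. Hence $A$ and $D$ are injective positive contractions having $1$ as a non-eigenvalue, and from $BB^* = A(1-A)$ and $B^*B = D(1-D)$ I would conclude that $B$ is injective with dense range. Its polar decomposition $B = W|B|$ then produces a genuine unitary $W \colon E^\perp \to E$ (not merely a partial isometry), where $|B| = (D(1-D))^{1/2}$.

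I would then set $\K = E$ and $U = 1_E \oplus W$, a unitary $\h \to \K\oplus\K$. Conjugating gives $UP_EU^* = \begin{bmatrix}1&0\\0&0\end{bmatrix}$ and $U P_F U^* = \begin{bmatrix} A & W|B|W^* \\ W|B|W^* & WDW^*\end{bmatrix}$, using $B = W|B|$ and $B^* = |B|W^*$ for the two (equal) off-diagonal entries. The heart of the argument, and the step I expect to be the main obstacle, is identifying the lower-right corner. Feeding $B = W|B|$ into the cross relation $AB = B(1-D)$, noting that $|B|$ commutes with $D$, and cancelling the factor $|B|$ on the right (legitimate since $|B|$ is injective with dense range) yields $AW = W(1-D)$, whence $WDW^* = 1 - A$. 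This is what forces the whole picture to be governed by the single operator $A$.

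Finally I would put $x = A^{1/2}$ and $y = (1-A)^{1/2}$. These are positive contractions in $\bk$, they commute (both being continuous functions of $A$), they satisfy $x^2 + y^2 = 1$, and $\ker x = \ker A = 0$, $\ker y = \ker(1-A) = 0$ by the generic-position analysis. It remains to check the off-diagonal entry: transporting the functional calculus through the unitary $W$ gives $W|B|W^* = W(D(1-D))^{1/2}W^* = (WD(1-D)W^*)^{1/2} = (A(1-A))^{1/2} = xy$, using $WDW^* = 1-A$. This produces exactly the claimed matrix $\begin{bmatrix} x^2 & xy \\ xy & y^2\end{bmatrix}$ for $UP_FU^*$, completing the argument.
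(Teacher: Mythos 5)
Your proof is correct, but note that the paper itself offers no proof of this lemma at all: it is quoted verbatim as Theorem 2 of Halmos's paper \cite{Ha}, so the only meaningful comparison is with Halmos's original argument. Halmos proves the normal form by first showing (his Theorem 1) that in generic position $F$ is the graph of a closed, densely defined, injective operator $T$ from $E$ to $E^\perp$ with dense range, and then applying the polar decomposition and functional calculus of this (generally unbounded) operator to compute the projection onto the graph. You instead work entirely inside $\B(\h)$ with the $2\times 2$ block decomposition of $P_F$ relative to $\h=E\oplus E^\perp$: the relations $A-A^2=BB^*$, $D-D^2=B^*B$, $AB+BD=B$ from $P_F=P_F^*=P_F^2$, the translation of the four generic-position conditions into $\ker A=\ker(1-A)=\ker D=\ker(1-D)=0$, the polar decomposition $B=W\abs{B}$ with $W$ a genuine unitary, and the intertwining $AW=W(1-D)$ obtained by cancelling the injective, dense-range factor $\abs{B}$. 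All of these steps check out, including the two points where care is needed: $\abs{B}=(D(1-D))^{1/2}$ is a continuous function of $D$ and hence commutes with it, and conjugation by the unitary $W$ commutes with the continuous functional calculus, so $W\abs{B}W^*=(A(1-A))^{1/2}=xy$ with $x=A^{1/2}$, $y=(1-A)^{1/2}$. Your route is the now-standard ``two projections'' argument; it buys a completely elementary, bounded-operator proof, at the cost of the geometric transparency of Halmos's graph picture, which makes visible why the generic-position hypotheses are exactly what is needed.
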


The following lemma is very easy to verify, whose proof is left to the readers.

\begin{lemma}\label{M_1 and N_1 are generic part}
Let $E$ and $F$ be two subspaces in a Hilbert space $\h$, and let $\h_1$ denote the subspace of $\h$ given by
\begin{align*}
\h_1=\h\ominus\left(E\cap F+E\cap F^\perp+E^\perp\cap F+E^\perp\cap F^\perp\right).
\end{align*}
If   $E_1=E\cap\h_1$ and $F_1=F\cap\h_1$, then exactly one of the following holds true:
\begin{enumerate}
    \item $\h_1=\{ 0\}$, and hence $E_1, F_1=0$.
    \item $E_1$ and $F_1$ are non-zero, and they are in generic position as subspaces of $\h_1$.
\end{enumerate}
Moreover, the projections $P_E$ and $P_F$ commute if and only if the first condition is satisfied (i.e. $\h_1=0$).
\end{lemma}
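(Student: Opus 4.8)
The plan is to reduce everything to the restrictions of $P_E$ and $P_F$ to $\h_1$, using that both projections reduce the complementary subspace $\h_0:=\h\ominus\h_1=E\cap F+E\cap F^\perp+E^\perp\cap F+E^\perp\cap F^\perp$. First I would record that the four subspaces $E\cap F$, $E\cap F^\perp$, $E^\perp\cap F$, $E^\perp\cap F^\perp$ are pairwise orthogonal (any two of them differ in the $E/E^\perp$ slot or in the $F/F^\perp$ slot), so $\h_0$ is a genuine closed orthogonal direct sum. On each of these four summands both $P_E$ and $P_F$ act as $0$ or as $1$; hence $\h_0$ is invariant under the self-adjoint operators $P_E$ and $P_F$, and therefore $\h_0$ and $\h_1$ both reduce them. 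Consequently $P_E|_{\h_1}$ and $P_F|_{\h_1}$ are projections of $\h_1$ with ranges $E_1=E\cap\h_1$ and $F_1=F\cap\h_1$, while $\h_1\ominus E_1=E^\perp\cap\h_1$ and $\h_1\ominus F_1=F^\perp\cap\h_1$.

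Next I would compute, inside $\h_1$, the four intersections that govern generic position. Since intersections of subspaces commute, $E_1\cap F_1=(E\cap F)\cap\h_1\subseteq\h_0\cap\h_1=\{0\}$, and in the same way $E_1\cap(\h_1\ominus F_1)=(E\cap F^\perp)\cap\h_1=\{0\}$, $(\h_1\ominus E_1)\cap F_1=(E^\perp\cap F)\cap\h_1=\{0\}$ and $(\h_1\ominus E_1)\cap(\h_1\ominus F_1)=(E^\perp\cap F^\perp)\cap\h_1=\{0\}$. If $\h_1=\{0\}$ we are in alternative (1). If $\h_1\neq\{0\}$, the vanishing of these four intersections is exactly the generic-position condition once we know $E_1$ and $F_1$ are non-zero, so it remains to establish that. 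If $E_1=\{0\}$, then $P_E|_{\h_1}=0$, so $\h_1\subseteq E^\perp$; this forces $F_1=F\cap\h_1\subseteq(E^\perp\cap F)\cap\h_1=\{0\}$ and $\h_1\ominus F_1=F^\perp\cap\h_1\subseteq(E^\perp\cap F^\perp)\cap\h_1=\{0\}$, whence $\h_1=F_1\oplus(\h_1\ominus F_1)=\{0\}$, a contradiction. Thus $E_1\neq\{0\}$, and $F_1\neq\{0\}$ by the symmetric argument, giving alternative (2). Since $\h_1=\{0\}$ and $\h_1\neq\{0\}$ are mutually exclusive and exhaustive, exactly one alternative holds.

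For the last assertion I would argue directly rather than invoking Lemma \ref{Structure of subspaces in generic position}. If $\h_1=\{0\}$ then $P_E$ and $P_F$ are simultaneously scalar on each summand of $\h_0=\h$, so they commute. Conversely, if $P_EP_F=P_FP_E$, then $P_EP_F$, $P_E(1-P_F)$, $(1-P_E)P_F$ and $(1-P_E)(1-P_F)$ are mutually orthogonal projections onto $E\cap F$, $E\cap F^\perp$, $E^\perp\cap F$ and $E^\perp\cap F^\perp$ respectively, and they sum to $1$; hence $\h=\h_0$, i.e. $\h_1=\{0\}$.

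The genuinely delicate step is the non-vanishing argument in the second paragraph: the four intersections being trivial does \emph{not} by itself force $E_1$ and $F_1$ to be non-zero, and this must be deduced separately (using that the reduction of $P_E$ to $\h_1$ has range exactly $E_1$). The only other point requiring care is bookkeeping: one must consistently distinguish the orthogonal complement taken in $\h$ from the one taken in $\h_1$, since $\h_1\ominus E_1$ equals $E^\perp\cap\h_1$ and not $E^\perp$. Everything else is a routine computation.
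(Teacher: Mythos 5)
The paper gives no proof of this lemma at all --- it is explicitly ``left to the readers'' --- so there is no argument of the authors to compare yours against; judged on its own, your proof is correct and complete. It is the standard verification the authors surely intended: the four pairwise-orthogonal summands of $\h_0=\h\ominus\h_1$ reduce both $P_E$ and $P_F$, the four intersections governing generic position vanish inside $\h_1$ by construction, and the one point that genuinely needs an argument (as you rightly flag) --- that $E_1$ and $F_1$ are non-zero when $\h_1\neq\{0\}$ --- is settled correctly by showing $E_1=\{0\}$ forces $\h_1=F_1\oplus(\h_1\ominus F_1)=\{0\}$; the ``moreover'' equivalence via the four commuting products $P_EP_F$, $P_E(1-P_F)$, $(1-P_E)P_F$, $(1-P_E)(1-P_F)$ summing to $1$ is likewise correct.
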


The subspaces $E_1$ and $F_1$ as in  Lemma 3.4 are called {\em generic part} of the subspaces $E$ and $F$. The structure of general subspaces can now be described using the two lemmas above.

\begin{proposition}\label{structure of any two subspaces}
Let $E$ and $F$ be two subspaces in a Hilbert space $\h$. Then there is a Hilbert space $\K$ (could be zero), and commuting positive contractions $x,y\in\bk$ with $x^2+y^2=1, \ker x=\ker y=0$ such that, upto unitary equivalence 
$$\h=E\cap F\oplus E\cap F^\perp\oplus E^\perp\cap F\oplus E^\perp\cap F^\perp\oplus\K\oplus\K,$$
and
\begin{align*}
   & P_E= 1\oplus 1\oplus 0\oplus0\oplus 1\oplus0~\mbox{ and }~
    P_F=1\oplus0\oplus1\oplus0\oplus \begin{bmatrix}
    x^2&xy\\
    xy&y^2
    \end{bmatrix}.
\end{align*}
Here any of the components in the decomposition could be $0$. Moreover, $P_EP_F=P_FP_E=P_{E\cap F}$ if and only if $\K=\{0\}$.
\end{proposition}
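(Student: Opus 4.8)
The plan is to isolate the part of $\h$ on which $P_E$ and $P_F$ already commute, thereby reducing the general case to Halmos's structure theorem (Lemma \ref{Structure of subspaces in generic position}) applied to the complementary generic part. First I would set
\[
D=E\cap F\ \oplus\ E\cap F^\perp\ \oplus\ E^\perp\cap F\ \oplus\ E^\perp\cap F^\perp,
\]
and observe that the four summands are pairwise orthogonal (any two of them are separated by $E$ versus $E^\perp$ or by $F$ versus $F^\perp$), so $D$ is a genuine internal orthogonal direct sum and $\h_1:=\h\ominus D$ is precisely the subspace appearing in Lemma \ref{M_1 and N_1 are generic part}.

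The key step, and the one that needs the most care, is to check that $D$ is reducing for both $P_E$ and $P_F$ and that the restrictions to $\h_1$ are the projections onto the generic parts $E_1=E\cap\h_1$ and $F_1=F\cap\h_1$. Evaluating on the summands of $D$, the projection $P_E$ acts as the identity on $E\cap F$ and $E\cap F^\perp$ and as $0$ on $E^\perp\cap F$ and $E^\perp\cap F^\perp$, while $P_F$ acts as the identity on $E\cap F$ and $E^\perp\cap F$ and as $0$ on the other two summands. In particular $P_E D\subseteq D$ and $P_F D\subseteq D$; since $P_E$ and $P_F$ are self-adjoint, $D$ — and hence $\h_1$ — reduces both. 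For $v\in\h_1$ we then have $P_Ev\in E\cap\h_1=E_1$, while $P_Ev=v$ for $v\in E_1$, so $P_E|_{\h_1}=P_{E_1}$, and likewise $P_F|_{\h_1}=P_{F_1}$. Thus, with respect to $\h=D\oplus\h_1$, we obtain $P_E=(1\oplus1\oplus0\oplus0)\oplus P_{E_1}$ and $P_F=(1\oplus0\oplus1\oplus0)\oplus P_{F_1}$.

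Next I would invoke Lemma \ref{M_1 and N_1 are generic part}, which splits into two cases. If $\h_1=\{0\}$, I take $\K=\{0\}$ and the asserted decomposition holds with no generic summand. Otherwise $E_1,F_1$ are nonzero and in generic position in $\h_1$, so Lemma \ref{Structure of subspaces in generic position} supplies a Hilbert space $\K$, a unitary $U:\h_1\to\K\oplus\K$, and commuting positive contractions $x,y\in\bk$ with $x^2+y^2=1$ and $\ker x=\ker y=0$ carrying $P_{E_1}$ to $\left[\begin{smallmatrix}1&0\\0&0\end{smallmatrix}\right]$ and $P_{F_1}$ to $\left[\begin{smallmatrix}x^2&xy\\xy&y^2\end{smallmatrix}\right]$. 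Conjugating by the unitary that is the identity on $D$ and equals $U$ on $\h_1$ then produces the claimed global forms of $P_E$ and $P_F$.

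Finally, the \emph{moreover} statement follows at once. By the last assertion of Lemma \ref{M_1 and N_1 are generic part}, $P_E$ and $P_F$ commute exactly when $\h_1=\{0\}$, i.e. when $\K=\{0\}$, and commuting projections always satisfy $P_EP_F=P_{E\cap F}$. For the converse one can alternatively read the obstruction off the blocks: on $\K\oplus\K$ one computes $P_EP_F-P_FP_E=\left[\begin{smallmatrix}0&xy\\-xy&0\end{smallmatrix}\right]$, which is nonzero as soon as $\K\neq\{0\}$ because $x$ and $y$ are injective; hence $P_EP_F$ cannot equal the self-adjoint operator $P_{E\cap F}$ when $\K\neq\{0\}$. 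Everything beyond the reducing-subspace verification in the second paragraph is routine bookkeeping.
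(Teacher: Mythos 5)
Your proof is correct and follows exactly the route the paper intends: the paper states this proposition as an immediate consequence of Lemma \ref{Structure of subspaces in generic position} and Lemma \ref{M_1 and N_1 are generic part}, and your argument simply fills in the details of that reduction (splitting off the four intersection summands, checking they reduce both projections, and applying Halmos's theorem to the generic part). The block computation of the commutator on $\K\oplus\K$ is a nice explicit verification of the ``moreover'' clause, but otherwise there is nothing here that diverges from the paper's approach.
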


We are now ready to give proof of our main result through a series of lemmas. The next two lemmas deal with factor von Neumann algebras only, where we use the following comparison theorem of projections  (see Corollary 47.9, \cite{Co}): If $\M$ is a factor and $p,q$ are two non-zero projections in $\M$, then either $p\preceq q$ or $q\preceq p$  i.e. there is a non-zero partial isometry $v\in\M$ such that $v^*v\leq p$ and $vv^*\leq q$. The same is clearly not true for arbitrary von Neumann algebras.

\begin{lemma}\label{subspaces with logmodularity properties cannot be orthogonal}
Let $\M$ be a factor, and let $p,q$ be mutually orthogonal projections in $\M$. Then $\Alg_\M\{p,q\}$ is not logmodular in $\M$.
\end{lemma}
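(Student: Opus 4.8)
The plan is to argue by contradiction, reducing to the case $p+q=1$ by compressing $\A:=\Alg_\M\{p,q\}$ to the invariant projection $e=p+q$, and then to show that the compressed algebra is a \emph{proper} von Neumann subalgebra of $e\M e$. Since a proper von Neumann subalgebra can never be logmodular, this yields the conclusion. Throughout I assume $p,q\neq 0$, the only nontrivial case.

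Suppose $\A$ is logmodular in $\M$. As $p\perp q$, the sum $e=p+q$ is a projection, and a direct computation shows $e\in\Lat_\M\A$: for $a\in\A$ we have $ae=ap+aq=pap+qaq=eae$, using $ap=pap$, $aq=qaq$ and $pq=qp=0$. Hence Proposition~\ref{logmodularity  is preserved on compression}(1) applies with this $e$ in the role of $p$, so $e\A e$ would be logmodular in $e\M e$.

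The next step is to identify $e\A e$. Decomposing operators along $\h=p\h\oplus q\h\oplus(1-e)\h$, the invariance conditions $ap=pap$ and $aq=qaq$ force the off-diagonal entries of $a\in\A$ in the $p$- and $q$-columns to vanish; compressing by $e$ removes the surviving off-diagonal parts, so $e\A e=p\M p\oplus q\M q=\{p\}'\cap e\M e$. In particular $e\A e$ is a von Neumann subalgebra of $e\M e$ with unit $e$, whence $\{a^*a:a\in(e\A e)^{-1}\}=(e\A e)_+^{-1}$. As the right-hand side has norm closure inside the norm-closed set $e\A e$, logmodularity of $e\A e$ would force $(e\M e)_+^{-1}\subseteq e\A e$; since the positive invertibles span $e\M e$, this would give $e\A e=e\M e$. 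Thus it suffices to show $e\A e\subsetneq e\M e$, i.e. that $p$ is not central in $e\M e$.

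This final step is where factoriality of $\M$ is essential, and I expect it to be the crux. By the comparison theorem for the factor $\M$, and since $p,q\neq0$, we may assume (say) $p\preceq q$, giving a nonzero partial isometry $v\in\M$ with $v^*v=p$ and $vv^*=q_1\leq q$. Then $v=eve\in e\M e$, while $pv=0$ and $vp=v\neq0$, so $v\notin\{p\}'$; taking $\lambda>\|v+v^*\|$, the element $\lambda e+(v+v^*)$ is positive and invertible in $e\M e$ yet does not commute with $p$. Hence $e\A e=\{p\}'\cap e\M e$ is a proper subalgebra of $e\M e$, contradicting its logmodularity. The only genuine obstacle is producing this non-commuting element, which is exactly what the comparison of projections in the factor $\M$ supplies.
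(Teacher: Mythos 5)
Your proof is correct, but it takes a genuinely different route from the paper's. The paper stays inside $\M$ and argues by a direct corner computation: it takes the same partial isometry $v$ (with $v^*v\le p$, $vv^*\le q$) supplied by the comparison theorem, sets $x=1+\epsilon(v+v^*)$ positive and invertible, chooses invertibles $a_n$ in $\Alg_\M\{p,q\}$ with $a_n^*a_n\to x$, and evaluates the $(q,p)$ corner two ways: $qxp=\epsilon v\neq 0$, while $qa_n^*a_np=(qa_n^*q)(pa_np)=qa_n^*(qp)a_np=0$, a contradiction. You instead compress to $e=p+q$ (legitimately: $e\in\Lat_\M\A$, and Proposition \ref{logmodularity  is preserved on compression} precedes this lemma in the paper, so there is no circularity), identify $e\A e=p\M p\oplus q\M q=\{p\}'\cap e\M e$ as a von Neumann subalgebra of $e\M e$, and invoke the structural principle that a norm-closed, self-adjoint, unital, \emph{proper} subalgebra can never be logmodular, since $\{a^*a:\, a\in(e\A e)^{-1}\}$ lies in the norm-closed set $e\A e$, so density would force $(e\M e)_+^{-1}\subseteq e\A e$ and hence $e\A e=e\M e$ by spanning. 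Factoriality then enters only to witness properness: your checks that $v=eve$, $pv=0$, $vp=v\neq0$, hence $[v+v^*,p]=v-v^*\neq 0$, are all correct, and they exhibit a positive invertible element of $e\M e$ outside $\{p\}'$. As for what each approach buys: the paper's computation is shorter, needs no identification of the compressed algebra, and is the template that gets reused in the harder Lemmas \ref{the main result for factors} and \ref{subspaces with logmodularity properties cannot be in generic position}, where the compressions are no longer self-adjoint and your structural shortcut is unavailable; your argument is more conceptual, isolates exactly where factoriality is needed (non-centrality of $p$ in $e\M e$), and rests on a cleanly reusable general fact, namely that logmodular subalgebras that are self-adjoint must be everything.
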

\begin{proof}
Since $\M$  is a factor and $p,q\in\M$ are non-zero, it follows that there is a non-zero partial isometry $v\in\M$ such that $v^*v\leq p$ and $vv^*\leq q$.  In particular, we have $v=qv=vp$, and since $pq=0$ it follows that $pv=pqv=0$.

Assume to the contrary that $\Alg_\M\{p,q\}$ is logmodular in $\M$. Let $x=1+\epsilon(v+v^*)$ for some $\epsilon>0$, where $\epsilon$ is chosen small enough so that $x$ is positive and invertible in $\M$. Then there exists a sequence $\{a_n\}$ of invertible elements in $\Alg_\M\{p,q\}$ such that $x=\lim_na_n^*a_n$. Now since $v^*p=0$ and $pq=0$, we note that $qxp=\epsilon qvp=\epsilon v$. We also have $a_np=pa_np$ and $qa_n^*=qa_n^*q$ for all $n$;  thus we get
\begin{align*}
    \epsilon v= qxp=\lim_nqa_n^*a_np=\lim_nqa_n^*qpa_np=0,
\end{align*}
which is a contradiction, as  $v\neq0$.
\end{proof}

We recall here a simple fact that if $p$ and $q$ are commuting projections, then $pq$ is a projection such that $p\wedge q=pq$ and $p\vee q=p+q-pq$.

\begin{lemma}\label{the main result for factors}
Let $\M$  be a factor, and let $p,q\in\M$ be two commuting projections. If $\Alg_\M\{p.q\}$ is logmodular  in $\M$, then either $p\leq q$ or $q\leq p$ holds true.
\end{lemma}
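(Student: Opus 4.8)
The plan is to argue by contradiction, reducing to the mutually orthogonal situation already settled in Lemma~\ref{subspaces with logmodularity properties cannot be orthogonal}. Suppose that neither $p\leq q$ nor $q\leq p$ holds. Since $p$ and $q$ commute, this means precisely that both projections $r:=p\wedge q^\perp=p-pq$ and $s:=p^\perp\wedge q=q-pq$ are nonzero; they are mutually orthogonal and satisfy $r+s=(p\vee q)-(p\wedge q)$. Writing $\A=\Alg_\M\{p,q\}$, we have $p,q\in\Lat_\M\A$ by definition, and since $\Lat_\M\A$ is closed under $\vee$ and $\wedge$, the projections $e:=p\vee q$ and $f:=p\wedge q$ also lie in $\Lat_\M\A$ with $e\geq f$.

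First I would pass to a corner. Applying Proposition~\ref{logmodularity  is preserved on compression}(3) to the pair $e\geq f$ shows that $\B:=(e-f)\A(e-f)$ is logmodular in $\tilde{\M}:=(e-f)\M(e-f)$. Since $\M$ is a factor and $e-f=r+s$ is a nonzero projection, the corner $\tilde{\M}$ is again a factor (the center of any corner $g\M g$ of a factor equals $\C g$). The point of compressing is that inside $\tilde{\M}$ the images of $p$ and $q$ become exactly the orthogonal pair $r$ and $s$.

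The key step is to verify that $r,s\in\Lat_{\tilde{\M}}\B$, i.e. $\B\subseteq\Alg_{\tilde{\M}}\{r,s\}$. For $a\in\A$ and $b=(r+s)a(r+s)$, a direct computation gives $br-rbr=s\,a\,r=qp^\perp\,a\,pq^\perp$ and $bs-sbs=r\,a\,s=pq^\perp\,a\,qp^\perp$. The invariance relations $ap=pap$ and $aq=qaq$ (valid since $p,q\in\Lat_\M\A$) yield $p^\perp a p=0$ and $q^\perp a q=0$, so both cross terms vanish and $r,s$ are indeed invariant. Consequently $\B\subseteq\Alg_{\tilde{\M}}\{r,s\}$, and since $\B$ is logmodular in $\tilde{\M}$, logmodularity passes up to the larger algebra: the dense family $\{b^*b:b\in\B^{-1}\}$ of $\tilde{\M}_+^{-1}$ is contained in $\{c^*c:c\in(\Alg_{\tilde{\M}}\{r,s\})^{-1}\}$, so the latter is dense as well and $\Alg_{\tilde{\M}}\{r,s\}$ is logmodular. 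But $r,s$ are nonzero mutually orthogonal projections in the factor $\tilde{\M}$, so Lemma~\ref{subspaces with logmodularity properties cannot be orthogonal} asserts that $\Alg_{\tilde{\M}}\{r,s\}$ is \emph{not} logmodular, a contradiction.

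The main obstacle is conceptual rather than computational: neither $r$ nor $s$ belongs to $\Lat_\M\A$ in the ambient algebra, since the invariant lattice is not closed under complementation, and therefore one cannot directly compare $\A$ with $\Alg_\M\{r,s\}$. Compressing to the corner $(e-f)\M(e-f)$ is precisely what kills the off-diagonal interaction between $p$ and $q$ and converts them into the orthogonal pair $r,s$; once this reduction is in place, the verification of invariance is the only technical point, and it follows immediately from $p^\perp a p=0=q^\perp a q$.
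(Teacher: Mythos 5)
Your proof is correct, but it takes a genuinely different route from the paper's. The paper proves this lemma directly, by the same perturbation technique as in Lemma~\ref{subspaces with logmodularity properties cannot be orthogonal}: it picks a nonzero partial isometry $v$ with $v^*v\leq pq^\perp$ and $vv^*\leq p^\perp q$, factorizes $x=1+\epsilon(v+v^*)$ as $\lim_n a_n^*a_n$, and derives the contradiction $\epsilon v=p^\perp qxpq^\perp=0$ via a two-stage limit argument that requires showing the compressions $pqa_npq$ are invertible in $pq\M pq$ with bounded inverses. You instead reduce the statement to Lemma~\ref{subspaces with logmodularity properties cannot be orthogonal}: compress by $e-f$ with $e=p\vee q$ and $f=p\wedge q$ (both lie in $\Lat_\M\A$, so Proposition~\ref{logmodularity  is preserved on compression}(3) applies), verify the algebraic identities $sar=q(p^\perp ap)q^\perp=0$ and $ras=p(q^\perp aq)p^\perp=0$ so that $r=pq^\perp$ and $s=p^\perp q$ become invariant for the compressed algebra, and then invoke the orthogonality lemma inside the corner. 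Your computation of the cross terms is right, and the two auxiliary facts you use beyond the paper's explicit toolkit are both valid: a corner $g\M g$ of a factor is a factor (its center is $\mathcal{Z}(\M)g=\C g$), and logmodularity passes from a subalgebra to any larger subalgebra of the same von Neumann algebra (the paper uses the same principle when passing from $\A$ to $\Alg\Lat\A$). You also correctly identify the one subtle point, namely that $r,s\notin\Lat_\M\A$ in general (an operator of $\A$ may map $\ran(r)$ into $\ran(pq)$), so no comparison with $\Alg_\M\{r,s\}$ is possible before compressing. As for what each approach buys: yours is shorter and more modular, recycling Lemma~\ref{subspaces with logmodularity properties cannot be orthogonal} as a black box rather than repeating its analysis; the paper's direct computation is self-contained (no corner-of-a-factor fact needed) and rehearses exactly the bounded-inverse and limit techniques that must be used again, in a setting where no such reduction is available, in Lemma~\ref{subspaces with logmodularity properties cannot be in generic position} and in the proof of Theorem~\ref{lattices of logmodular algebras}.
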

\begin{proof}
Since $p$ and $q$ commuting projections, it follows that the operators $pq, pq^\perp$ and  $p^\perp q$ are projections. From Lemma \ref{subspaces with logmodularity properties cannot be orthogonal}, we know that $pq\neq 0$. Note that the required assertion will follow by the following argument once we show that either $pq^\perp=0$ or $p^\perp q=0$: say $pq^\perp=0$, then $p=p(q+q^\perp)=pq$ which implies that $p\leq q$.  Similarly, $p^\perp q=0$ will imply $q\leq p$.

Assume to the contrary that both the projections $pq^\perp$ and $ p^\perp q$ are non-zero. Since $\M$  is a factor, there is a non-zero partial isometry $v\in\M$ such that $v^*v\leq pq^\perp $ and $vv^*\leq p^\perp q$; in particular we have,
\begin{align}\label{v=vp perp}
    v=vpq^\perp=p^\perp qv.
\end{align}
Now let $x=1+\epsilon(v+v^*)$ for $\epsilon>0$, where we choose $\epsilon$ small enough so that $x$ is positive and invertible in $\M$. Since $\Alg_\M\{p,q\}$ is logmodular in $\M$, there exists a sequence $\{a_n\}$ of invertible elements in $\M$  such that $a_n, a_n^{-1}\in\Alg_\M\{p,q\}$ for all $n$ and $x=\lim_na_n^*a_n$. Note that $pqa_npq=a_npq$ and $pqa_n^{-1}pq=a_n^{-1}pq$ for all $n$; hence $pqa_npq$ is invertible in $pq\M pq$ with inverse $pqa_n^{-1}pq$. In particular, the sequence $\{(pqa_npq)^{-1}\}$ is bounded, as the sequence $\{a_n^{-1}\}$ is bounded. Also note from $\eqref{v=vp perp}$ that $vpq=0$ and $p^\perp qv^*=0$; hence we get
$$p^\perp qxpq=p^\perp qpq+\epsilon p^\perp q(vpq)+\epsilon(p^\perp qv^*)pq=0.$$
 Thus we have
\begin{align*}
    0=p^\perp qxpq=\lim_np^\perp qa_n^*a_npq=\lim_np^\perp qa_n^*pqa_npq,
\end{align*}
and so by multiplying by $\{(pqa_npq)^{-1}\}$ to  right side of the sequence, we get $\lim_np^\perp qa_n^*pq=0$; using which  and the expressions $q a_n^*=qa_n^*q$ and $a_np=pa_np$ for all $n$, it follows that
\begin{align*}
    p^\perp qxpq^\perp=\lim_np^\perp qa_n^*a_npq^\perp
    =\lim_n(p^\perp qa_n^*qp)a_npq^\perp=0.
\end{align*}
On the other hand, we again use the equation $p^\perp qv^*=0$ to get
$$p^\perp qxpq^\perp= p^\perp qpq^\perp+\epsilon p^\perp q vpq^\perp+\epsilon p^\perp qv^*pq^\perp=\epsilon p^\perp qvpq^\perp=\epsilon v\neq 0.$$
So we get a contradiction, which arose because we assumed that both $pq^\perp$ and $p^\perp q$ are non-zero. Thus one of them is zero and we have the required result.
\end{proof}

We are going to use the following simple lemma very frequently.

\begin{lemma}\label{in limits, invertible isometries behave like unitaries}
Let $\{a_n\}$ be a sequence of invertible elements in a $C^*$-algebra $\M$  such that $\lim_na_n^*a_n=1$. Then $\{a_n^{-1}\}$ is bounded and $\lim_na_na_n^*=1$.
\end{lemma}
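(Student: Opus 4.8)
The plan is to exploit the $C^*$-identity $\|x\|^2=\|x^*x\|=\|xx^*\|$ together with the observation that, near the unit, invertibility and inversion are controlled by a Neumann series. The whole argument splits into a boundedness step and a similarity-estimate step, and the crux is to secure the bound on $\{a_n^{-1}\}$ \emph{before} attempting the final estimate.

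First I would establish the two boundedness facts. Since $a_n^*a_n\to 1$, for all large $n$ we have $\|1-a_n^*a_n\|<1$, so $a_n^*a_n$ is invertible and the Neumann series gives
\[
\|(a_n^*a_n)^{-1}-1\|\le\frac{\|1-a_n^*a_n\|}{1-\|1-a_n^*a_n\|}\longrightarrow 0,
\]
that is, $(a_n^*a_n)^{-1}\to 1$. Because $(a_n^*a_n)^{-1}=a_n^{-1}(a_n^{-1})^*$, the $C^*$-identity yields $\|a_n^{-1}\|^2=\|(a_n^*a_n)^{-1}\|\to 1$; in particular $\{a_n^{-1}\}$ is bounded. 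The same identity applied to $a_n^*a_n\to 1$ gives $\|a_n\|^2=\|a_n^*a_n\|\to 1$, so $\{a_n\}$ is bounded as well.

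The second step rests on the algebraic identity
\[
a_na_n^*=a_n(a_n^*a_n)a_n^{-1},
\]
obtained by cancelling $a_na_n^{-1}=1$. Writing $e_n:=a_n^*a_n-1\to 0$, this becomes $a_na_n^*=1+a_ne_na_n^{-1}$, whence
\[
\|a_na_n^*-1\|=\|a_ne_na_n^{-1}\|\le\|a_n\|\,\|e_n\|\,\|a_n^{-1}\|\longrightarrow 0
\]
by the uniform bounds from the first step and $\|e_n\|\to 0$. This gives $a_na_n^*\to 1$, completing the argument.

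I expect no serious obstacle: the proof reduces entirely to the two elementary ingredients above. The only point requiring care is logical ordering—one must extract the bound on $\{a_n^{-1}\}$ first, since this is exactly where invertibility of the limit $1$ enters; without it the similarity $a_na_n^*=a_n(a_n^*a_n)a_n^{-1}$ cannot be controlled, and the conclusion would fail (as it does for non-invertible limits, e.g. a sequence of isometries that is not eventually surjective).
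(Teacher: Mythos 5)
Your proof is correct and follows essentially the same route as the paper: both first bound $\{a_n^{-1}\}$ via continuity of inversion together with the $C^*$-identity, and then transfer $a_n^*a_n\to 1$ to $a_na_n^*\to 1$ by conjugating with $a_n$ and invoking the uniform bounds. The paper merely writes the conjugation in the reverse direction, starting from $a_n^*(a_na_n^*-1)a_n=(a_n^*a_n)^2-a_n^*a_n\to 0$ and stripping off the outer factors, which is the same estimate as your identity $a_na_n^*-1=a_n(a_n^*a_n-1)a_n^{-1}$.
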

\begin{proof}
Since $\lim_na_n^*a_n=1$, it follows that $\lim_n(a_n^*a_n)^{-1}=1$ and so $\{(a_n^*a_n)^{-1}\}$ is bounded. This implies the first assertion that $\{a_n^{-1}\}$ is bounded. Further we have $\lim_na_n^*a_na_n^*a_n=1$, and  hence
\begin{align*}
    0=\lim_n( a_n^*a_na_n^*a_n-a_n^*a_n)=\lim_na_n^*(a_na_n^*-1)a_n.
\end{align*}
Since the sequence $\{a_n^{-1}\}$ is bounded, it follows by multiplying ${a_n^*}^{-1}$ to the left and $a_n^{-1}$ to the right of the sequence   that $\lim_n(a_na_n^*-1)=0, $ as to be proved.
\end{proof}

Next we consider lattices of logmodular algebras in arbitrary von Neumann algebras, where our aim is to prove that the generic part of any two invariant subspaces is zero. Recall that $\ran(x)$ denotes the range of an operator $x$.

\begin{lemma}\label{subspaces with logmodularity properties cannot be in generic position}
Let $\M$ be a von Neumann subalgebra of $\bh$ for some Hilbert $\h$, and let $p,q$ be two non-zero projections in $\M$ such that  $\ran(p)$ and $\ran(q)$ are  in generic position in $\h$. Then $\Alg_\M\{p,q\}$ is not logmodular in $\M$.
\end{lemma}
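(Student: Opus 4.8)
The plan is to transport the problem to Halmos's normal form and there exhibit a single positive invertible element whose factorisation is blocked by a unitary invariant of the angle between $\ran p$ and $\ran q$. Since logmodularity is preserved by $*$-isomorphisms (Proposition \ref{logmodularity are preserved under isomorphism}), I would apply the unitary of Lemma \ref{Structure of subspaces in generic position} and assume
\[
\h=\K\oplus\K,\qquad p=\begin{bmatrix}1&0\\0&0\end{bmatrix},\qquad q=\begin{bmatrix}x^2&xy\\xy&y^2\end{bmatrix},
\]
where $x,y\in\bk$ are commuting injective positive contractions with $x^2+y^2=1$. As $p,q\in\M$, the von Neumann algebra they generate -- the algebra of $2\times 2$ matrices over the abelian algebra generated by $x$ -- lies in $\M$, so every matrix whose entries are bounded Borel functions of $x$ is available as a test element. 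A short computation gives $a=\left[\begin{smallmatrix}\alpha&\beta\\0&\delta\end{smallmatrix}\right]\in\A$ if and only if $a$ is block upper triangular (invariance of $\ran p$) and $y\alpha x+y\beta y=x\delta y$ (invariance of $\ran q$).

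Assume for contradiction that $\A$ is logmodular and put
\[
\tilde z=1+\epsilon\big(p^\perp qp+(p^\perp qp)^*\big)=\begin{bmatrix}1&\epsilon xy\\\epsilon xy&1\end{bmatrix},
\]
which is positive and invertible for small $\epsilon>0$ because $0\le xy\le\tfrac12$. Choose $a_n\in\A^{-1}$ with $a_n^*a_n\to\tilde z$, write $a_n=\left[\begin{smallmatrix}\alpha_n&\beta_n\\0&\delta_n\end{smallmatrix}\right]$, and read off the corners. Using Lemma \ref{in limits, invertible isometries behave like unitaries} I would obtain that $\alpha_n$ is asymptotically unitary ($\alpha_n^*\alpha_n,\ \alpha_n\alpha_n^*\to1$); that $\alpha_n^*\beta_n\to\epsilon xy$ forces $\beta_n=\epsilon\alpha_n xy+o(1)$, whence $\delta_n^*\delta_n\to1-\epsilon^2x^2y^2$; and that substituting $\beta_n$ into the invariance relation yields $y\alpha_n x(1+\epsilon y^2)=x\delta_n y+o(1)$. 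If $x,y$ were scalars this already closes the argument: one gets $\delta_n=\alpha_n(1+\epsilon y^2)+o(1)$, so $\delta_n^*\delta_n\to(1+\epsilon y^2)^2>1$, contradicting $\delta_n^*\delta_n\to1-\epsilon^2x^2y^2<1$. This is the mechanism I would aim to globalise.

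The hard part is precisely that $\alpha_n,\delta_n$ need not commute with the angle operators $x,y$, so one cannot ``divide by $x$ and $y$'' and read $\delta_n^*\delta_n$ off the invariance relation directly. To launder the $o(1)$'s into exact identities I would pass to a norm ultrapower $\M^\omega$, where $\alpha:=[\alpha_n]$ is a genuine unitary, $\delta^*\delta=1-\epsilon^2x^2y^2$, and $y\alpha x(1+\epsilon y^2)=x\delta y$ hold exactly. Solving the last identity for $\delta$ and inserting it into $\delta^*\delta=1-\epsilon^2x^2y^2$ collapses, after cancelling the outer factors, to
\[
\alpha^*\,\big(y^2x^{-2}\big)\,\alpha=\frac{1-\epsilon^2x^2y^2}{(1+\epsilon y^2)^2}\;y^2x^{-2}.
\]
Since the scalar multiplier is a function of $x$ strictly less than $1$ on $(0,1)$, the right-hand side is a function of $x$ strictly dominated by $y^2x^{-2}$, so the two sides have different spectra; as conjugation by a unitary preserves the spectrum, this is impossible. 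I expect the two genuine technical points to be (i) controlling the non-commuting cross-terms, for which the ultrapower (equivalently, a careful ``asymptotically unitary'' bookkeeping as in Lemma \ref{in limits, invertible isometries behave like unitaries}) is the cleanest device, and (ii) the possible degeneracy of $x$ near $0$ and $1$, where $y^2x^{-2}$ is unbounded. When $\sigma(x)$ is bounded away from $0$ and $1$ the displayed relation is a bona fide identity of bounded operators and, $\alpha$ being unitary, gives the immediate norm contradiction $\|y^2x^{-2}\|=\|\alpha^*(y^2x^{-2})\alpha\|>\big\|\tfrac{1-\epsilon^2x^2y^2}{(1+\epsilon y^2)^2}\,y^2x^{-2}\big\|$; the general case, in which $\sigma(x)$ may accumulate at $0$ or $1$ (forced only to meet $(0,1)$ because $\ker x=\ker y=0$), is the main remaining obstacle, and I would approach it by localising to spectral windows $\chi_{[c,C]}(x)$ with $0<c\le C<1$ -- or by replacing $\tilde z$ with a variant supported on such a window -- and controlling the resulting boundary terms.
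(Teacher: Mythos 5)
Your reduction to Halmos's normal form, the block upper-triangular form of $a_n$ and $a_n^{-1}$, the invariance relation $x\delta y=y\alpha x+y\beta y$, and the asymptotic bookkeeping via Lemma \ref{in limits, invertible isometries behave like unitaries} all match the paper's proof, and your test element $\tilde z=1+\epsilon\bigl(p^{\perp}qp+(p^{\perp}qp)^{*}\bigr)$ is even a little cleaner than the paper's, since it lies in $\M$ with no extra argument. But the end-game has a genuine gap, and it is exactly the case you flag as "the main remaining obstacle." Your contradiction rests on the claim that $\alpha^{*}\,(y^{2}x^{-2})\,\alpha=\frac{1-\epsilon^{2}x^{2}y^{2}}{(1+\epsilon y^{2})^{2}}\,y^{2}x^{-2}$ is impossible because the two sides "have different spectra." When $\inf\sigma(x)=0$ this is simply false: writing $g(t)=(1-t^{2})/t^{2}$ and $h(t)=f(t)g(t)$ with $f(t)=\frac{1-\epsilon^{2}t^{2}(1-t^{2})}{(1+\epsilon(1-t^{2}))^{2}}$, one has $f(0^{+})=(1+\epsilon)^{-2}>0$, so both $g$ and $h$ map a spectrum accumulating at $0$ onto sets whose closure is all of $[0,\infty)$; the two (unbounded) operators then have identical spectra and no contradiction results. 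The pointwise strict inequality $h<g$ only converts into a norm or spectral contradiction when $\sigma(x)$ is bounded away from $0$, i.e.\ precisely in the case you concede. Worse, the ultrapower device breaks down in the problematic case: if $0\in\sigma(x)$ then $x$ (as a constant sequence) acquires a nonzero kernel in $\M^{\omega}$, so $x^{-1}$ does not exist there even as a densely defined operator and "solving for $\delta$" is illegitimate. The proposed patch of localising to spectral windows $\chi_{[c,C]}(x)$ also does not go through as stated, because elements of $\A$ need not commute with spectral projections of $x$, so neither the relation $x\delta_n y=y\alpha_n x+y\beta_n y$ nor the factorization hypothesis compresses to the window without uncontrolled cross terms.

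The paper's proof is built to avoid ever inverting $x$ (or $y$) alone, which is the move your argument cannot justify. It uses a one-parameter family of test elements $Z_{\alpha}=\begin{bmatrix}1&\alpha\\ \alpha&\alpha^{2}+1\end{bmatrix}$, $\alpha\geq 1$, and the key observation that $u=x+\alpha y$ \emph{is} invertible regardless of the spectra of $x$ and $y$, because $u^{2}=1+(\alpha^{2}-1)y^{2}+2\alpha xy\geq 1$ (here $x^{2}+y^{2}=1$ is what saves the day), together with the bound $y^{2}u^{-2}\leq \alpha^{-2}$. The same asymptotics you carried out then give $ya_n=xc_nyu^{-1}-yd_nyu^{-1}$ with $d_n\to0$ and $c_nc_n^{*}\to1$, whence
\begin{align*}
y^{2}=\lim_n (ya_n)(ya_n)^{*}=\lim_n xc_n\,y^{2}u^{-2}\,c_n^{*}x\leq \tfrac{1}{\alpha^{2}}\,x^{2}
\end{align*}
for every $\alpha\geq1$, and letting $\alpha\to\infty$ forces $y=0$, contradicting $\ker y=0$. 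So the quantifier structure is different in an essential way: rather than one fixed test element plus a spectral comparison (which degenerates at $0$), the paper extracts a uniform operator inequality from an unbounded family of test elements. To repair your write-up you would have to either prove the lemma under the standing possibility $\inf\sigma(x)=0$ — which your method does not do — or adopt the paper's family-of-witnesses strategy, at which point the two proofs coincide.
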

\begin{proof}
Assume to the contrary that the algebra $\Alg_\M\{p,q\}$ is logmodular in $\M$. Since $\ran(p)$ and $\ran(q)$ are in generic position in $\h$, it follows from  Lemma \ref{Structure of subspaces in generic position} that there exist a Hilbert space $\K$  and commuting positive contractions $x,y\in\bk$ satisfying $\ker x=0,$ $\ker y=0$ and $x^2+y^2=1$  such that upto unitary equivalence, we have $\h=\K\oplus\K$ and
\begin{align}\label{for for projections p,q}
    p=\begin{bmatrix}
        1&0\\
        0&0
    \end{bmatrix}~~\mbox{and}~~
    q=\begin{bmatrix}
        x^2&xy\\
        xy&y^2
    \end{bmatrix}.
\end{align}
Since logmodularity is preserved under unitary equivalence by Proposition \ref{logmodularity are preserved under isomorphism}, we can assume without loss of generality that $\M$ is a von Neumann subalgebra of $\B(\K\oplus\K)$, and $p,q$ are of the form as in $\eqref{for for projections p,q}$.

Now let $S$ be an invertible operator  such that $S,S^{-1}\in\Alg_\M\{p,q\}$. Then $Sp=pSp$ and  $S^{-1}p=pS^{-1}p$, which imply that $S$ and $S^{-1}$ have the following form:
\begin{align*}
    S=\begin{bmatrix}
        a&b\\
        0&c
    \end{bmatrix}~~\mbox{and}~~
    S^{-1}=\begin{bmatrix}
        a'&b'\\
        0&c'
    \end{bmatrix},
\end{align*}
for some operators $a,b,c,a',b',c'\in\bk$. It is then clear from the expression $SS^{-1}=1=S^{-1}S$ that $a$ and $c$ are invertible in $\bk$ with respective inverses $a'$ and $c'$. Note that
\begin{align*}
    Sq=\begin{bmatrix}
        ax^2+bxy&axy+by^2\\
        cxy&cy^2
    \end{bmatrix}~\mbox{and}~
    qSq=\begin{bmatrix}
       x^2ax^2+x^2bxy+xycxy&x^2axy+x^2by^2+xycy^2\\
        xyax^2+xybxy+y^2cxy&xyaxy+xyby^2+y^2cy^2
    \end{bmatrix}.
\end{align*}
Since $Sq=qSq$, we equate  $(2,1)$ entries of the matrices $Sq$ and $qSq$, and use the equation $1-y^2=x^2$  to get the expression $x^2cxy=xyax^2+xybxy.$ Since $x$ is injective (and hence $x$ has dense range, as $x$ is positive), $x$ can be cancelled from both the sides to get the following:
\begin{align}\label{eq: relation between operators a,b,c}
   xcy=yax+yby.
\end{align}
Now fix $\alpha \geq1$, and let $Z=\begin{bmatrix}
    1&\alpha\\
    \alpha&\alpha^2+1
\end{bmatrix}\in\B(\K\oplus\K)$. It is clear that $Z$ is a positive and invertible operator. We claim that $Z\in \M$. Since $p$ and $q$ are in $\M$, it follows that
$\begin{bmatrix}
x^2&0\\
0&0\end{bmatrix}=pqp\in\M$.
Similarly
$\begin{bmatrix}0&0\\
0&y^2
\end{bmatrix}=p^\perp qp^\perp\in\M$.
Thus
$\begin{bmatrix}x^2&0\\
0&y^2
\end{bmatrix}\in\M$
and hence
$\begin{bmatrix}0&xy\\
xy&0
\end{bmatrix}\in\M$.
Set
$T=\begin{bmatrix}0&xy\\
xy&0
\end{bmatrix}$
and let $T=U|T|$ be its polar decomposition, where $|T|$ denotes the square root of the operator $T^*T$. It is clear that $T$ is one-one (as $xy$ is one-one), and so $U$ is unitary. It is straightforward to check (using uniqueness of polar decomposition) that
$|T|=\begin{bmatrix}xy&0\\
0&xy
\end{bmatrix}$
and $U=\begin{bmatrix}0&1\\
1&0
\end{bmatrix}$.
Since $\M$ is a von Neumann algebra,   it follows that $U\in\M$ and so
$\begin{bmatrix}0&\alpha\\
\alpha&0
\end{bmatrix}=\alpha U\in\M$.
Also since
$\begin{bmatrix}1&0\\
0&\alpha^2+1
\end{bmatrix}=p+(\alpha^2+1)p^\perp\in\M$, we conclude that $Z\in\M$, as claimed.

Thus by logmodularity  of $\Alg_\M\{p,q\}$ in $\M$, we get a sequence $\{S_n\}$ of invertible operators with $S_n,S_n^{-1}\in\Alg_\M\{p,q\}$ for all $n$ such that $Z=\lim_nS_n^*S_n$. It then follows from above discussion that for each $n$, $S_n$ is of the form
$$S_n=\begin{bmatrix}
    a_n&b_n\\
    0&c_n
\end{bmatrix},$$
for $a_n,b_n,c_n\in\bk$ such that $a_n$ and $c_n$ are invertible operators, and from  \eqref{eq: relation between operators a,b,c} we have
\begin{align}\label{relation between a_n,b_n,c_n}
    xc_ny=ya_nx+yb_ny.
\end{align}
Now we have
\begin{align}\label{expressions for Z_n and S_n*S_n}
    \begin{bmatrix}1&\alpha\\
    \alpha&\alpha^2+1
    \end{bmatrix}=Z=\lim_nS_n^*S_n=
    \lim_n\begin{bmatrix}
        a_n^*a_n&a_n^*b_n\\
        b_n^*a_n&b_n^*b_n+c_n^*c_n
    \end{bmatrix}.
\end{align}
So we get $\lim_na^*_na_n=1$, and since each $a_n$ is invertible, it follows from Lemma \ref{in limits, invertible isometries behave like unitaries} that
\begin{align}\label{lim_na_na_n*=1}
    \lim_na_na_n^*=1.
\end{align}
We also get from \eqref{expressions for Z_n and S_n*S_n} that $\lim_na_n^*b_n=\alpha$, which further yields by multiplying $a_n$ to the left side of the sequence and using \eqref{lim_na_na_n*=1} that
\begin{align}\label{lim_nb_n-alpha a_n=0}
   \lim_n(b_n-\alpha a_n)=0.
\end{align}
Set  $d_n=b_n-\alpha a_n$ for all $n$. Then $\lim_nd_n=0$, and since $\lim_na_n^*a_n=1$  we have
\begin{align*}
    \lim_nb_n^*b_n=\lim_n(d_n+\alpha a_n)^*(d_n+\alpha a_n)=\lim_n\alpha^2 a_n^*a_n=\alpha^2.
\end{align*}
We further get from \eqref{expressions for Z_n and S_n*S_n} that $\alpha^2+1=\lim_n(b_n^*b_n+c_n^*c_n)$, which
yields $\lim_nc_n^*c_n=1$.
Again as each $c_n$ is invertible, it follows from Lemma \ref{in limits, invertible isometries behave like unitaries} that
\begin{align}\label{lim_nc_nc_n*=1}
    \lim_nc_nc_n^*=1.
\end{align}
Now from \eqref{relation between a_n,b_n,c_n} and using $b_n=\alpha a_n+d_n$, we have
\begin{align*}
     xc_ny=ya_nx+yb_ny=ya_nx+\alpha  ya_ny+yd_ny=ya_n(x+\alpha  y)+yd_ny=ya_nu+yd_ny,
\end{align*}
where $u=x+\alpha y$. Since $\alpha\geq1$, we note that $u$ is positive and invertible (in fact $u^2=1+(\alpha^2-1)y^2+2\alpha xy\geq 1$),
and thus we get
\begin{align}\label{eq:xc_nyu-1}
    ya_n=xc_nyu^{-1}-yd_nyu^{-1}.
\end{align}
Note that
\begin{align*}
    u^2&=(x+\alpha  y)^2=x^2+\alpha^2y^2+2\alpha xy\geq \alpha^2y^2
\end{align*}
and since $y$ and $u$ commutes, it follows that
\begin{align}\label{eq:y^2u^{-2}}
    y^2u^{-2}\leq 1/{\alpha^2}.
\end{align}
Thus we use the expression $\lim_nd_n=0$ from \eqref{lim_nb_n-alpha a_n=0}, and equations in \eqref{lim_na_na_n*=1}, $\eqref{lim_nc_nc_n*=1}$,   \eqref{eq:xc_nyu-1} and \eqref{eq:y^2u^{-2}} to get the following:
\begin{align*}
    y^2&=\lim_nya_na_n^*y=\lim_n(ya_n)(ya_n)^*
=\lim_n(xc_nyu^{-1}-yd_nyu^{-1})(xc_nyu^{-1}-yd_nyu^{-1})^*\\
    &=\lim_n(xc_nyu^{-1})(xc_nyu^{-1})^*=\lim_nxc_ny^2u^{-2}c_n^{*}x
    \leq\lim_n\frac{1}{\alpha^2}xc_nc_n^*x= \frac{1}{\alpha^2}x^2.
\end{align*}
Since $\alpha\geq 1$ is arbitrary, it follows by letting $\alpha$ tend to $\infty$  that $y=0$, which is clearly not true. Thus our assumption that $\Alg_\M\{p,q\}$ is logmodular is false completing the proof.
\end{proof}

Finally we  prove our main theorem in full generality, for which we consider the following lemma.

\begin{lemma}\label{M_2 and N_2 has the logmodularity properties}
Let an algebra $\A$ have logmodularity (resp. factorization) in a von Neumann algebra $\M$, and let $p,q\in \Lat_\M\A$. If $r=(p\wedge q)\vee(p^\perp\wedge q^\perp)$, then $r^\perp\A r^\perp$ has logmodularity (resp. factorization) in  $r^\perp\M r^\perp$.
\end{lemma}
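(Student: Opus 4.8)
The plan is to recognize that $r^\perp$ is nothing but a difference of two projections that already lie in $\Lat_\M\A$, so that the statement reduces immediately to part (3) of Proposition \ref{logmodularity  is preserved on compression}. Thus almost all of the work has in fact already been done, and the only genuine content here is an elementary identification of $r^\perp$.

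First I would simplify the projection $r$. By De Morgan's law for projections in a von Neumann algebra, $p^\perp\wedge q^\perp=(p\vee q)^\perp$. Writing $e=p\wedge q$ and $f=p\vee q$, this gives $r=e\vee f^\perp$. Since $e=p\wedge q\leq p\leq p\vee q=f$, the projections $e$ and $f^\perp$ are orthogonal, because $ef^\perp=e(1-f)=e-ef=e-e=0$. Hence $e\vee f^\perp=e+f^\perp$, and therefore
\begin{align*}
    r^\perp=1-e-f^\perp=f-e=(p\vee q)-(p\wedge q).
\end{align*}

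Next I would invoke the fact, recorded in the preliminaries, that $\Lat_\M\A$ is closed under the lattice operations $\wedge$ and $\vee$. Since $p,q\in\Lat_\M\A$, it follows that $e=p\wedge q\in\Lat_\M\A$ and $f=p\vee q\in\Lat_\M\A$, with $e\leq f$. Applying Proposition \ref{logmodularity  is preserved on compression}(3) to the pair $f\geq e$ of projections in $\Lat_\M\A$ then yields that $(f-e)\A(f-e)$ has logmodularity (resp. factorization) in $(f-e)\M(f-e)$. As $f-e=r^\perp$, this is exactly the desired conclusion that $r^\perp\A r^\perp$ has logmodularity (resp. factorization) in $r^\perp\M r^\perp$.

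The argument carries both the logmodularity and the factorization case simultaneously, since Proposition \ref{logmodularity  is preserved on compression} already treats both. There is essentially no obstacle beyond the bookkeeping: the one point deserving care is the passage from the lattice-theoretic expression $(p\wedge q)\vee(p^\perp\wedge q^\perp)$ for $r$ to the operator identity $r^\perp=(p\vee q)-(p\wedge q)$, which rests on De Morgan's law together with the orthogonality $ef^\perp=0$ coming from $e\leq f$. Once $r^\perp$ is identified as a difference of two invariant projections, the entire substance of the lemma is supplied by Proposition \ref{logmodularity  is preserved on compression}.
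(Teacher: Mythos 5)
Your proof is correct and takes essentially the same route as the paper: both arguments identify $r^\perp$ as the difference $(p\vee q)-(p\wedge q)$ of two nested projections in $\Lat_\M\A$ and then invoke part (3) of Proposition \ref{logmodularity  is preserved on compression}. The only cosmetic difference is how the larger projection is certified to lie in $\Lat_\M\A$: the paper notes $p^\perp\wedge q^\perp\in\Lat_\M\A^*$ and passes to the complement, while you use De Morgan's law together with the closure of $\Lat_\M\A$ under $\vee$.
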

\begin{proof}
Set $r_1=p\wedge q$ and $r_2=p^\perp\wedge q^\perp$. It is clear that $r_1r_2=0$ and $r=r_1+r_2$. Since $p,q\in\Lat_\M\A$, it follows that $r_1\in\Lat_\M\A$. Also we note that $p^\perp, q^\perp\in\Lat_\M\A^*$ and hence $r_2\in\Lat_\M\A^*$, which is to say that $r_2^\perp\in\Lat_\M\A$. Note that $r^\perp=1-r_2-r_1=r_2^\perp-r_1$, and so $r_1\leq r_2^\perp$. Both the assertions about logmodularity and factorization now follow from part (3) of Proposition \ref{logmodularity  is preserved on compression}.
\end{proof}

\noindent
{\em Proof of Theorem \ref{lattices of logmodular algebras}.} Let $\A$ be a logmodular subalgebra of a von Neumann algebra $\M$, and let $p,q\in\Lat_\M\A$. We have to show that $pq=qp$.   The second assertion that $p\leq q$ or $q\leq p$  whenever $\M$ is a  factor,  will then follow from Lemma \ref{the main result for factors}.

Set $r=(p\wedge q)\vee (p^\perp\wedge q^\perp)$. Then $r^\perp\A r^\perp$ is a logmodular algebra in $r^\perp\M r^\perp$ by Lemma \ref{M_2 and N_2 has the logmodularity properties}. Note that the projections $p$ and $q$ commute with $r$, and hence  with $r^\perp$. So if we set  $p'= r^\perp pr^\perp $ and $q'= r^\perp qr^\perp $, then it is immediate that $p',q'$ are projections in $r^\perp\M r^\perp$, and we have $p'=p\wedge r^\perp $ and $q'=q\wedge r^\perp$.  Note that $pq(p\wedge q)=p\wedge q=qp(p\wedge q)$ and $pq(p^\perp\wedge q^\perp)=0=qp(p^\perp \wedge q^\perp)$; hence $pqr=p\wedge q=qpr$, which further yields
\begin{align*}
   &pq=pq(r+r^\perp)=pqr+pqr^\perp=p\wedge q+(r^\perp pr^\perp)(r^\perp qr^\perp)=p\wedge q+p'q',\\
  &qp=qpr+qpr^\perp=p\wedge q+(r^\perp qr^\perp)(r^\perp pr^\perp)=p\wedge q+q'p'.
\end{align*}
Therefore, in order to show the required assertion it is enough to prove that $p'q'=q'p'$. Also we note that
$$p'\wedge q'=p\wedge q\wedge r^\perp\leq r\wedge r^\perp=0,$$
and
$$(r^\perp-p')\wedge(r^\perp-q')=(r^\perp-pr^\perp)\wedge(r^\perp-qr^\perp)=p^\perp r^\perp\wedge q^\perp r^\perp
=(p^\perp\wedge q^\perp)\wedge r^\perp\leq r\wedge r^\perp=0.$$
Here $r^\perp-p'$ and $r^\perp-q'$ are the orthogonal complement of the projections $p'$ and $q'$  in  $r^\perp\M r^\perp$ respectively.
Thus if necessary, by replacing the algebras $\M$ and $\A$ by $r^\perp\M r^\perp$ and $r^\perp\A r^\perp$ respectively, and the projections $p,q$ by $p',q'$ respectively we assume without loss of generality that
\begin{align}
    p\wedge q=0=p^\perp\wedge q^\perp,
\end{align}
so that  $r=0$ and  $\M=r^\perp\M r^\perp$.
The purpose of reducing $\M$ to $r^\perp\M r^\perp$ is just to avoid multiple cases, and work with $4\times 4$ matrices rather than $6\times6$ matrices as we shall see.

Now assume to the contrary that $pq\neq qp$. Then the generic part of $\ran( p)$ and $\ran(q)$ in $\h$ are non-zero by  Proposition \ref{structure of any two subspaces}, where $\h$ is the Hilbert space on which the von Neumann algebra $\M$ acts. Further if both $p\wedge q^\perp $ and $p^\perp\wedge q$ are also zero, then (as $p\wedge q=0=p^\perp\wedge q^\perp$) $\ran(p)$ and $\ran(q)$ will be in generic position, which is not possible by   Lemma \ref{subspaces with logmodularity properties cannot be in generic position}  since $\Alg_\M\{p,q\}$ is logmodular in $\M$ as well. Therefore at least one of the projections $p\wedge q^\perp$ and $p^\perp\wedge q$ is non-zero. Thus for the remainder of the proof,   we  assume that both $p^\perp\wedge q$ and $p\wedge q^\perp$ are non-zero (the case of exactly one of them being non-zero follows similarly). It then follows from Proposition \ref{structure of any two subspaces} that there exist a non-zero Hilbert space $\K$ and commuting positive contractions $x,y\in\bk$ satisfying $x^2+y^2=1$ and $\ker x=0=\ker y$ such that upto unitary unitary equivalence, we have
\begin{align}\label{decomposition of H}
    \h=\ran(p\wedge q^\perp)\oplus\K\oplus\K\oplus\ran(p^\perp\wedge q)
\end{align}
and
\begin{align}\label{how P_M and P_N looks like}
    p=\begin{bmatrix}
        1&0&0&0\\
        0&1&0&0\\
        0&0&0&0\\
        0&0&0&0
    \end{bmatrix}~~\mbox{and}~~
    q=\begin{bmatrix}
        0&0&0&0\\
        0&x^2&xy&0\\
        0&xy&y^2&0\\
        0&0&0&1
    \end{bmatrix}.
\end{align}
Since logmodularity is preserved under unitary equivalence by Proposition \ref{logmodularity are preserved under isomorphism}, we assume without loss of generality that $\M$ is a von Neumann subalgebra of $\B(\ran(p\wedge q^\perp)\oplus\K\oplus\K\oplus\ran(p^\perp\wedge q))$, and $p,q$ have the form as in \eqref{how P_M and P_N looks like}. Now set
$$\widetilde{\K}_1=\ran(p\wedge q^\perp)\oplus\K~~ \mbox{and} ~~\widetilde{\K}_2=\K\oplus \ran(p^\perp \wedge q)$$
so that
\begin{align}\label{decomposition of H 2}
    \h=\widetilde{\K}_1\oplus\widetilde{\K}_2.
\end{align}
Throughout the proof, we make use of both the decomposition of $\h$ in \eqref{decomposition of H} and \eqref{decomposition of H 2}, which should be understood according to the context. Now fix $\alpha  \geq1 $ and define the operator $Z\in\bh$ by
\begin{align}
    Z=\begin{bmatrix}
        1&0&0&0\\
        0&1&\alpha&0\\
        0&\alpha&\alpha^2+1&0\\
        0&0&0&1
    \end{bmatrix}
    =\begin{bmatrix}
        1& Z_2\\
        Z_2^*&Z_3
    \end{bmatrix},~\mbox{where }
    Z_2=\begin{bmatrix}
        0& 0\\
        \alpha&0
    \end{bmatrix}~\mbox{and}~
    Z_3=\begin{bmatrix}
        \alpha^2+1& 0\\
        0&1
    \end{bmatrix}.
\end{align}
It is clear that $Z$ is a positive and invertible operator. In the similar fashion as in Lemma \ref{subspaces with logmodularity properties cannot be in generic position}, it is easy to show, by using $p,q\in\M$, that $Z\in\M$.
Since $\A$ is logmodular in $\M$, we get a sequence $\{S_n\}$ of  invertible operators in $\A^{-1}$ such that $Z=\lim_nS_n^*S_n$. Then for each $n$, we have $S_np=pS_np$ and $S_n^{-1}p=pS_n^{-1}p$; hence the operators $S_n$ and $S^{-1}_n$ have the form
\begin{align}
    S_n=\begin{bmatrix}
        a_n&b_n&p_n&q_n\\
        c_n&d_n&r_n&s_n\\
        0&0&e_n&f_n\\
        0&0&g_n&h_n
    \end{bmatrix}
    = :\begin{bmatrix}
        A_n&B_n\\
        0&C_n
    \end{bmatrix}~~\mbox{and}~~
    S^{-1}_n=\begin{bmatrix}
        a'_n&b_n'&p_n'&q_n'\\
        c'_n&d'_n&r'_n&s_n'\\
        0&0&e_n'&f_n'\\
        0&0&g'_n&h'_n
    \end{bmatrix}
    =: \begin{bmatrix}
        A_n'&B_n'\\
        0&C_n'
    \end{bmatrix},
\end{align}
for appropriate operators $a_n,b_n,., a_n',b_n',..$ etc.
In particular, we have $A_nA'_n=1=A_n'A_n$ i.e. $A_n$ is invertible in $\B(\widetilde{\K}_1)$. Similarly $C_n$ is invertible in $\B(\widetilde{\K}_2)$.
Now
\begin{align}\label{Z=lim_nS_n*S_n}
   \begin{bmatrix}
       1&Z_2\\
       Z_2^*&Z_3
   \end{bmatrix}
   =Z=\lim_nS_n^*S_n  
    =\lim_n \begin{bmatrix}
       A^*_nA_n&A_n^*B_n\\
        B_n^*A_n&B_n^*B_n+C_n^*C_n
    \end{bmatrix}.
\end{align}
Then we have  $\lim_nA_n^*A_n=1$ and since $A_n$ is invertible, it follows  from Lemma \ref{in limits, invertible isometries behave like unitaries} that 
\begin{align}\label{lim_nA_nA_n*=1}
\lim_nA_nA_n^*=1.
\end{align}
We also have $\lim_nA_n^*B_n=Z_2$,  which after multiplied by $A_n$ to  left side of the sequence and using \eqref{lim_nA_nA_n*=1} yields  $\lim_n( B_n-A_nZ_2)=0$; but we have
\begin{align*}
    B_n-A_nZ_2=\begin{bmatrix}
        p_n&q_n\\
        r_n&s_n
    \end{bmatrix}-\begin{bmatrix}
        a_n&b_n\\
        c_n&d_n
    \end{bmatrix}\begin{bmatrix}
        0&0\\
        \alpha&0
    \end{bmatrix}
    =\begin{bmatrix}
        p_n-\alpha b_n&q_n\\
        r_n-\alpha d_n&s_n
    \end{bmatrix},
\end{align*}
and thus we get  the following equations:
\begin{align}
    &\lim_n(p_n-\alpha b_n)=0,\label{lim_n p_n-alpha b_n=0}\\
    &\lim_n(r_n-\alpha d_n)=0\label{lim_nr_n-alpha d_n=0}. 
\end{align}
Also if $D_n=B_n-A_nZ_2$ for all $n$, then $\lim_nD_n=0$ and since $\lim_nA_n^*A_n=1$, we have
\begin{align*}
    \lim_nB_n^*B_n=\lim_n(D_n+A_nZ_2)^*(D_n+A_nZ_2)=\lim_nZ_2^*A_n^*A_nZ_2=Z_2^*Z_2.
\end{align*}
Further, we have from \eqref{Z=lim_nS_n*S_n} that $\lim_n(B_n^*B_n+C_n^*C_n)=Z_3$ which yields
\begin{align}\label{lim_nC_n*C_n=1}
    \lim_nC_n^*C_n=Z_3-\lim_nB_n^*B_n=Z_3-Z_2^*Z_2=\begin{bmatrix}
      \alpha^2+1&0\\
      0&1
    \end{bmatrix}-\begin{bmatrix}
      \alpha^2&0\\
      0&0
    \end{bmatrix}=\begin{bmatrix}
      1&0\\
      0&1
    \end{bmatrix}.
\end{align}
In particular, by computing the matrices $C_n^*C_n$, we get $\lim_n(e_n^*e_n+g_n^*g_n)=1$;
hence there exists $m\in\N$ such that $\|e_n^*e_n\|\leq 2$, which in turn yields
\begin{align}\label{e_ne_n* leq 2}
    e_ne_n^*\leq 2~~\mbox{ for all }n\geq m.
\end{align}
Now
\begin{align*}
    S_nq=\begin{bmatrix}
        0&b_nx^2+p_nxy&b_nxy+p_ny^2&q_n\\
        0&d_nx^2+r_nxy&d_nxy+r_ny^2&s_n\\
        0&e_nxy&e_ny^2&f_n\\
        0&g_nxy&g_ny^2&h_n
    \end{bmatrix}
\end{align*}
and
\begin{align*}
    qS_nq=
    \begin{bmatrix}
        0&0&0&0\\
        0&x^2d_nx^2+x^2r_nxy+xye_nxy&x^2d_nxy+x^2r_ny^2+xye_ny^2&x^2s_n+xyf_n\\
        0&xyd_nx^2+xyr_nxy+y^2e_nxy&xyd_nxy+xyr_ny^2+y^2e_ny^2&xys_n+y^2f_n\\
        0&g_nxy&g_ny^2&h_n
    \end{bmatrix}.
\end{align*}
Since $S_nq=qS_nq$ for each $n$,  by equating $(3,2)$ entries of the respective matrices and then using $1-y^2=x^2$ we get the expression $x^2e_nxy=xyd_nx^2+xyr_nxy$;  but since  $x$ is one-one and hence $x$ has dense range, $x$ cancels from both sides of the equation to yield
\begin{align*}
    xe_ny=yd_nx+yr_ny.
\end{align*}
If we set $t_n=r_n-\alpha d_n$ for all $n$, then above equation further yields 
\begin{align*}
    xe_ny=yd_nx+yr_ny= yd_nx+y(\alpha d_n+t_n)y
=yd_n(x+\alpha y)+yt_ny,
\end{align*}
which in other words says that
\begin{align}\label{yd_n=xe_nyu^-1-..}
    yd_n=xe_nyu^{-1}-yt_nyu^{-1}
\end{align}
where $u=x+\alpha  y$, which  is clearly positive and invertible as $u^2\geq 1$. In a similar vein  as in \eqref{eq:y^2u^{-2}} in Lemma \ref{subspaces with logmodularity properties cannot be in generic position}, $u$ and $y$ commute and we have
\begin{align}\label{y^2u^-2 leq 1/alpha^2}
   y^2u^{-2}\leq 1/{\alpha^2}.
\end{align}
Also by  equating $(1,2)$ entries of  $S_nq$ and $qS_nq$, we get   $b_nx^2+p_nxy=0$;  again since $x$ has dense range, it follows that $ b_nx+p_ny=0$ for all $n$, which  by using \eqref{lim_n p_n-alpha b_n=0} further yields
\begin{align*}
    0=\lim_n(b_nx+p_ny)=\lim_nb_n(x+\alpha y)+\lim_n(p_n-\alpha b_n)y=\lim_nb_n(x+\alpha y).
\end{align*}
But $x+\alpha y$ is invertible as seen before,  so the above equation yields
 \begin{align}\label{lim_nb_n=0}
       \lim_nb_n=0.
 \end{align}
Similarly since $S_n^{-1}$ also has all these properties, we have
\begin{align}\label{lim_nb_n'=0}
    \lim_nb_n'=0.
\end{align}
Note that the $(2,2)$ entry of the matrix $S_nS_n^{-1}$ (with respect to the decomposition $\ran(p\wedge q^\perp)\oplus\K\oplus\K\oplus\ran(p^\perp\wedge q))$) is
$c_nb_n'+d_nd_n'$;  hence we have $c_nb_n'+d_nd_n'=1$ for all $n$. Since $\lim_nb_n'=0$ from \eqref{lim_nb_n'=0}, it follows that $ \lim_nd_nd_n'=1.$ Hence there exists $n_0\in\N$ such that $\|d_nd_n'-1\|<1$ for all $n\geq n_0$, which in particular says that $d_nd_n'$ is invertible for all $n\geq n_0$. Thus $d_nd_n'(d_nd_n')^{-1}=1$, which is to say that $d_n$ is right invertible for all $n\geq n_0$.
Likewise,  from $(2,2)$ entry of $S_n^{-1}S_n$ and using $\lim_nb_n=0$ from \eqref{lim_nb_n=0}, we get $\lim_nd_n'd_n=1$. Again this  implies that $d_n'd_n$ is invertible, and hence $d_n$ is left invertible for large $n$. Thus we have shown that $d_n$ is both left and right invertible, which is to say that $d_n$ is invertible for large $n$.

Now for each $n$, note that the $(2,2)$ entry of the matrix  $S_n^*S_n$ (with respect to the decomposition $\ran(p\wedge q^\perp)\oplus\K\oplus\K\oplus\ran(p^\perp\wedge q))$) is $b_n^*b_n+d_n^*d_n$. Since $\lim_nS_n^*S_n=Z$, it then follows that $\lim_n(b_n^*b_n+d_n^*d_n)=1,$ and since $\lim_nb_n=0$ from \eqref{lim_nb_n=0}, we get $\lim_nd_n^*d_n=1$. But $d_n$ is invertible for large $n$,  so it follows from Lemma \ref{in limits, invertible isometries behave like unitaries} that
\begin{align}\label{lim_nd_nd_n^*=1}
    \lim_nd_nd_n^*=1.
\end{align}
Now using $\lim_n t_n=0$ from \eqref{lim_nr_n-alpha d_n=0}, and equations \eqref{e_ne_n* leq 2}, \eqref{yd_n=xe_nyu^-1-..}, \eqref{y^2u^-2 leq 1/alpha^2} and \eqref{lim_nd_nd_n^*=1},  we get  the following:
\begin{align*}
    y^2&=\lim_nyd_nd^*_ny=\lim_n(yd_n)(yd_n)^*=\lim_n(xe_nyu^{-1}-yt_nyu^{-1})(xe_nyu^{-1}-yt_nyu^{-1})^*\\
    &=\lim_n(xe_nyu^{-1})(xe_nyu^{-1})^*=\lim_nxe_ny^2u^{-2}e_n^{*}x\leq\frac{1}{\alpha^2}\lim_nxe_ne^*_nx\leq \frac{2}{\alpha^2}x^2.
\end{align*}
Since $\alpha  \geq1 $ is arbitrary, it follows by taking $\alpha\to \infty$ that
$y=0$, which is  a contradiction. The proof is now complete.
\qed

\section{Reflexivity of algebras with factorization}

Our main result of this article says that lattice of any algebra with  factorization property in a factor is a nest.  A natural question that arises is whether  algebras with factorization  are also  nest subalgebras i.e. are they  reflexive? Certainly, we cannot always expect automatic reflexivity of such algebras (See Example \ref{an example of a subdiagonal algebra}).  But then what extra condition  can be imposed in order to show that they are reflexive?  A result due to Radjavi and Rosenthal \cite{RaRo} says that a weakly closed algebra in $\bh$ whose lattice is a nest, is a nest algebra if and only if it contains a maximal abelian self-adjoint algebra (masa). In this section, we show that if the lattice of  an algebra with factorization in $\bh$ has finite dimensional atoms, then it contains a masa and hence it is reflexive.  

We recall some terminologies to this end. Let $\M$ be a von Neumann algebra, and  let $\E$  be a complete  nest in $\M$.  For any projection $p\in\E$, let $$p_-=\vee\{q\in\E; q< p\}  \mbox{ and }p_+=\wedge\{q\in\E; q> p\}.$$
An {\em atom} of $\E$  is a nonzero projection of the form $p-p_-$ for some $p\in\E$  with $p\neq p_-$. Clearly  two distinct atoms are mutually orthogonal. The nest $\E$  is called {\em atomic}  if there is a finite or countably infinite sequence $\{r_n\}$ of atoms of $\E$  such that $\sum_nr_n=1$, where the sum converges in weak operator topology (WOT). Further an algebra $\A$ in $\M$ is called {\em $\M$-transitive} (simply {\em transitive} when $\M=\bh$) if $\Lat_\M\A=\{0,1\}$.  We now consider the following simple lemma.

\begin{lemma}\label{lattices of compressed algebras}
Let $\A$ be an algebra in a von Neumann algebra $\M$  such that $\Lat_\M\A$ is a nest, and let  $p,q\in \Lat_\M\A$ with $p<q$.  If $r=q-p$, then  $\Lat_{r\M r}(r\A r)=\{s\in r\M r; p+ s\in\Lat_\M\A \}$. In particular, if $p=q_-$ then $r\A r$ is $r\M r$-transitive.
\end{lemma}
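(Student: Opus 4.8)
The plan is to prove the asserted set equality by collapsing both membership conditions --- $s\in\Lat_{r\M r}(r\A r)$ and $p+s\in\Lat_\M\A$ --- onto one and the same family of operator identities, and then to read off transitivity from the nest structure. First I would record the elementary preliminaries. Since $p,q\in\Lat_\M\A$ with $p<q$, the difference $r=q-p$ is a projection, and the computation \eqref{eq: eqation give algebra} appearing in the proof of Proposition~\ref{logmodularity  is preserved on compression} (which uses only $p,q\in\Lat_\M\A$ and not logmodularity) shows that $r\A r=(q-p)\A(q-p)$ is a genuine subalgebra of the von Neumann algebra $r\M r$. For any $s\in r\M r$ one has $s\leq r\leq q$ and $s\leq p^\perp$, so that $sp=ps=0$ and $qs=sq=s$; in particular $p+s$ is a projection precisely when $s$ is, and then $p\leq p+s\leq q$.

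The core of the argument is a pair of identities valid for every $a\in\A$. Using $ap=pap$ together with $sp=0$ one gets $(1-p-s)ap=0$, so that for $t=p+s$ the defect of invariance reduces to $t^\perp a t=(1-p-s)as$, whence $p+s\in\Lat_\M\A$ is equivalent to $(1-p-s)as=0$ for all $a$. On the other hand, since $s\leq q$ and $aq=qaq$ one checks $qas=as$, so that $(rar)s=ras=(q-p)as=(1-p)as$, while $s(rar)s=sas$; thus $s\in\Lat_{r\M r}(r\A r)$ is equivalent to $(1-p)as=sas$ for all $a$, which is again $(1-p-s)as=0$. Both conditions therefore reduce to the single family of identities $(1-p-s)as=0$, $a\in\A$, and this yields the claimed equality $\Lat_{r\M r}(r\A r)=\{s\in r\M r;\ p+s\in\Lat_\M\A\}$.

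For the final clause I would invoke the nest hypothesis. If $p=q_-$, then no element of the nest $\Lat_\M\A$ lies strictly between $p$ and $q$: any $t\in\Lat_\M\A$ with $t<q$ satisfies $t\leq q_-=p$. Given a projection $s\in\Lat_{r\M r}(r\A r)$, the equality just proved gives $p+s\in\Lat_\M\A$ with $p\leq p+s\leq q$, forcing $p+s\in\{p,q\}$ and hence $s\in\{0,r\}$; since $0$ and the unit $r$ of $r\M r$ always belong to the lattice, we conclude $\Lat_{r\M r}(r\A r)=\{0,r\}$, i.e.\ $r\A r$ is $r\M r$-transitive. I expect the main obstacle to be the bookkeeping in the second paragraph: correctly exploiting $s\leq p^\perp$ and $s\leq q$ to reduce $(rar)s$, $s(rar)s$, and $t^\perp a t$ all to the common expression $(1-p-s)as$. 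Once these reductions are in place, the remainder is formal.
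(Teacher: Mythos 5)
Your proof is correct and takes essentially the same route as the paper's: both arguments verify the set equality by direct algebraic manipulation with the relations $ap=pap$, $aq=qaq$, $sp=0$, $qs=s$, and both deduce transitivity from $p=q_-$ in the same way. Your only variation is organizational --- you reduce both membership conditions to the single identity $(1-p-s)as=0$ for all $a\in\A$, whereas the paper proves the two inclusions separately; the underlying computations are identical.
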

\begin{proof}
As seen in Proposition \ref{logmodularity  is preserved on compression}, $r\A r$ is a subalgebra of $r\M r$. Now let $s\in\Lat_{r\M r}(r\A r)$, and let $a\in\A$. Note that $(rar)s=s(rar)s$, and  since $rs=s$, it follows that  $ras=(rar)s=s(rar)s=sas,$
using which, and the expression  $aq=qaq$, we have
\begin{align}\label{as=(p+s)as}
    as=aqs=qaqs=qas=pas+ras=pas+sas=(p+s)as.
\end{align}
Also since $sp=0$ and $ap=pap$, we have $sap=spap=0$, which along with \eqref{as=(p+s)as}  yield
\begin{align*}
    (p+s)a(p+s)=pap+sap+(p+s)as=ap+as=a(p+s).
\end{align*}
Since $a$ is arbitrary in $\A$, it follows that $p+s\in\Lat_\M\A$. Conversely let $s\in r\M r$ be a projection such that $p+s\in\Lat_\M\A$, and fix $a\in\A$. Then $a(p+s)=(p+s)a(p+s)$, and since $ps=0=pr$ and $rs=s$, we have
\begin{align*}
    (rar)s=ras=ra(p+s)s=r(p+s)a(p+s)s=s(rar)s.
\end{align*}
Again as $a\in\A$ is arbitrary, we conclude that $s\in \Lat_{r\M r}(r\A r)$. Thus we have proved the first assertion. Note that if $p=q_{-}$ then for any $s\in r\M r$,  $p+s\in\Lat_\M\A$  if and only if $s=0$ or $s=r$. The second assertion then follows from the first.
\end{proof}

The following lemma is the crux of this section. Recall our convention that all algebras are  unital and norm closed.

\begin{lemma}\label{atoms of finite range are in A}
Let an algebra $\A$ have factorization in a von Neumann algebra $\M$, and let $p,q\in\Lat_\M\A$ such that $p<q$. If $q-p$ has finite dimensional range, then $q-p\in\A$. In particular, if either $p$ or $p^\perp$ has finite dimensional range, then $p\in\A$.
\end{lemma}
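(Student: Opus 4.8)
The plan is to reduce everything to the finite–dimensional middle block and to exploit factorization to force a rigid block–diagonal structure. Write $r=q-p$; since $p<q$ the projections commute and $r$ is a projection, so I decompose $\h=\ran(p)\oplus\ran(r)\oplus\ran(q^\perp)$, where the middle summand $\ran(r)$ is finite dimensional by hypothesis. Because $p,q\in\Lat_\M\A$, every $a\in\A$ (and every inverse of an element of $\A^{-1}$) is block upper triangular with respect to this decomposition. I would also dispatch the ``in particular'' clause right away: it follows from the main assertion by taking $p=0<q$ with $q$ of finite rank (giving $p$ finite rank), and by taking $p<q=1$ with $p^\perp$ finite rank, which yields $p^\perp\in\A$ and hence $p=1-p^\perp\in\A$ since $1\in\A$ and $\A$ is a linear space.

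The heart of the argument is to factor a one–parameter family of diagonal operators. For $t>0$ put $x_t=p+t^2r+q^\perp\in\M_+^{-1}$, i.e. the block–diagonal operator with blocks $1,t^2,1$. By factorization there is $a_t\in\A^{-1}$ with $x_t=a_t^*a_t$. Writing $a_t=\left[\begin{smallmatrix}A&B&C\\0&D&E\\0&0&F\end{smallmatrix}\right]$ and comparing $a_t^*a_t$ with $x_t$ entrywise, the $(1,1)$ block gives $A^*A=1$; as the diagonal blocks of an invertible block–upper–triangular operator are invertible (both it and its inverse being block upper triangular), $A$ is unitary, whence the $(1,2)$ and $(1,3)$ blocks force $B=C=0$. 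Then the $(2,2)$ block gives $D^*D=t^2$, so $D=tW$ with $W$ unitary; the $(2,3)$ block gives $E=0$; and the $(3,3)$ block gives $F$ unitary. Thus \emph{every} such factor is block diagonal, $a_t=\mathrm{diag}(A_t,\,tW_t,\,F_t)$ with $A_t,W_t,F_t$ unitary. This rigidity is exactly where factorization, rather than mere logmodularity, is used: I need the equality $x_t=a_t^*a_t$, not an approximation.

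Now I pass to a limit using finiteness of $\dim\ran(r)$. Consider $\tfrac1t a_t=\mathrm{diag}(\tfrac1tA_t,\,W_t,\,\tfrac1tF_t)\in\A$. The outer blocks have norm $\tfrac1t\to0$, while the unitaries $W_t$ lie in the \emph{compact} unitary group of the finite dimensional space $\ran(r)$, so I may choose $t_n\to\infty$ with $W_{t_n}\to W_\infty$ in norm; then $\tfrac1{t_n}a_{t_n}\to\widetilde W:=\mathrm{diag}(0,W_\infty,0)$ in norm, and $\widetilde W\in\A$ because $\A$ is norm closed. Here $W_\infty$ is unitary on $\ran(r)$, in particular invertible, so its minimal polynomial $m$ satisfies $m(0)\neq0$. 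Writing $m(z)=zh(z)+m(0)$ and setting $g(z)=-\tfrac1{m(0)}zh(z)$, I obtain a polynomial with $g(0)=0$ and $g(W_\infty)=1_{\ran(r)}$. Since $g$ has no constant term, $g(\widetilde W)=\sum_{k\ge1}c_k\widetilde W^{\,k}\in\A$, and evaluating blockwise gives $g(\widetilde W)=\mathrm{diag}(g(0),g(W_\infty),g(0))=\mathrm{diag}(0,1,0)=r$, so $r\in\A$.

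The main obstacle is precisely the passage from the compressed algebra back to $\A$ itself: the projection $r$ always lies in $r\A r$ but need not lie in $\A$, and $r\A r\not\subseteq\A$ in general. The two finite–dimensional inputs---compactness of the unitary group of $\ran(r)$, used to land $\widetilde W$ inside the norm–closed $\A$, and finiteness of the minimal polynomial, used to synthesize $r$ from $\widetilde W$ by a constant–term–free polynomial---are what make the extraction work. I expect a direct functional–calculus or Riesz–projection substitute to be more delicate here, since the spectrum of $a_t$ relative to $\A$ can fill in the annulus lying between the two spectral circles of $a_t$, obstructing the natural contour.
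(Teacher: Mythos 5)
Your proof is correct and follows essentially the same route as the paper's: the same three-block decomposition $\h=\ran(p)\oplus\ran(r)\oplus\ran(q^\perp)$, factorization of the same one-parameter family of diagonal operators (you use $\mathrm{diag}(1,t^2,1)$, the paper uses $\mathrm{diag}(1/n,1,1/n)$, which agree up to a scalar), the same rigidity computation forcing every factor to be block diagonal with unitary blocks after normalization, and the same compactness argument for the unitary group of the finite-dimensional middle block to place $\widetilde W=\mathrm{diag}(0,W_\infty,0)$ in the norm-closed algebra $\A$. The only divergence is the final extraction of $r$ from $\widetilde W$: the paper forms the products $S_{n_k}^{-1}S\in\A$ (using that inverses of factorization elements lie in $\A$) and takes a second norm limit $S_{n_k}^{-1}S=\mathrm{diag}\bigl(0,\,d_{n_k}^{*}d,\,0\bigr)\to r$, whereas you apply a constant-term-free polynomial $g$ with $g(W_\infty)=1$, built from the minimal polynomial of $W_\infty$, so that $g(\widetilde W)=r$ directly; your finish is purely algebraic and avoids the second limit, at the cost of invoking finite-dimensionality once more, and both are valid.
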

\begin{proof}
The second assertion clearly follows from the first.  For the first assertion, set $r=q-p$. Let $\M$ be a von Neumann subalgebra of $\bh$ for some Hilbert space $\h$. Note that
$$\h=\ran(p)\oplus\ran(r)\oplus\ran(q^\perp),$$
and we consider operators of $\bh$ with respect to this decomposition.  For each $n\in\N$, set \begin{align*}
    X_n=r+\frac{1}{n}r^\perp=\begin{bmatrix}
    1/n&0&0\\
    0&1&0\\
    0&0&1/n
    \end{bmatrix}.
\end{align*}
It is clear that each $X_n$ is a positive and invertible operator, and since $r\in\M$ it follows that $X_n\in\M$. So by factorization property of $\A$ in $\M$, there exists an invertible operator $S_n\in\A^{-1}$ such that $X_n=S_n^*S_n$. Then each $S_n$ leaves $\ran(p)$ and $\ran(q)$ invariant, which equivalently says that $S_n$ has the form
\begin{align}
    S_n=\begin{bmatrix}
        a_n&b_n&c_n\\
        0&d_n&e_n\\
        0&0&f_n
    \end{bmatrix},
\end{align}
for appropriate operators $a_n, b_n..$ etc. We claim that  the off diagonal entries $b_n, c_n, e_n$ are $0$ for all $n$. Note that since  $S_n^{-1}\in\A$, each $S_n^{-1}$  leaves $\ran(p)$ and $\ran(q)$ invariant, meaning that $S_n^{-1}$ is also upper triangular. Consequently, the diagonal entries $a_n, d_n, f_n$ of $S_n$ are invertible. Now for all $n$, we have
\begin{align*}
  \begin{bmatrix}
    1/n&0&0\\
    0&1&0\\
    0&0&1/n
    \end{bmatrix}=X_n=S_n^*S_n=\begin{bmatrix}
    a_n^*a_n&a_n^*b_n&a_n^*c_n\\
    b_n^*a_n&b_n^*b_n+d_n^*d_n&b_n^*c_n+d_n^*e_n\\
    c_n^*a_n&c_n^*b_n+e_n^*d_n&c_n^*c_n+e_n^*e_n+f_n^*f_n
    \end{bmatrix}.
\end{align*}
By equating  $(1,2)$ entries of the matrices above, we have $a_n^*b_n=0$ and since $a_n$ is invertible, it follows that $b_n=0$. Similarly from $(1,3)$ entries, we have $a_n^*c_n=0$, from which we again use invertibility of $a_n$ to get  $c_n=0$. Further from $(2,3)$ entries,  we have $b_n^*c_n+d_n^*e_n=0$. But since $b_n=0$ and $d_n$ is invertible,  it follows that $e_n=0$. This proves the claim that for all $n$, the operators $b_n, c_n$ and $e_n$ are $0$.

Next equating $(1,1)$ entries of $S_n^*S_n$ and $X_n$ we have  $a_n^*a_n=1/n$ for all $n$, which implies that $\lim_na_n=0$. Also from equating $(3,3)$ entries we have $c_n^*c_n+e_n^*e_n+f_n^*f_n=1/n$, so it follows that $f_n^*f_n\leq 1/n$ for all $n$; hence  $\lim_nf_n=0$. Further since $\ran(r)$ is finite dimensional by hypothesis, from $d_n^*d_n=1$, $d_n$ is a unitary for every $n.$ By compactness of the unitary group in finite dimensions we get  a subsequence $\{d_{n_k}\}$ converging to a unitary $d\in \B(\ran(r)).$ Thus we have $\lim_kS_{n_k}=S$, where
$$S=\begin{bmatrix}
    0&0&0\\
    0&d&0\\
    0&0&0
\end{bmatrix}.$$
Since each $S_{n_k}\in\A$ and $\A$ is norm closed, it follows that $S\in\A$.  Note that  $\lim_kd_{n_k}^{-1}=\lim _kd_{n_k}^*= d^*=d^{-1}$, using which we have
\begin{align*}
    \lim_kS_{n_k}^{-1}S=\lim_k\begin{bmatrix}
       a_{n_k}^{-1}&0&0\\
        0&d_{n_k}^{-1}&0\\
        0&0&f_{n_k}^{-1}
    \end{bmatrix}\begin{bmatrix}
       0&0&0\\
        0&d&0\\
        0&0&0
    \end{bmatrix}=\lim_k\begin{bmatrix}
       0&0&0\\
       0&d_{n_k}^{*}d&0\\
        0&0&0
    \end{bmatrix}=\begin{bmatrix}
       0&0&0\\
        0&1&0\\
        0&0&0
    \end{bmatrix}=r.
\end{align*}
Since  $S_{n_k}^{-1}S\in\A$ (as $S_{n_k}^{-1}$ and $ S\in\A$) for all $k$, we conclude that $r\in\A$, as required to prove.
\end{proof}

We now discuss a sufficient criterion imposed on the dimension of atoms of the lattice to prove the reflexivity of an algebra having factorization in $\bh$. It is clearly not necessary as any nest algebra arising out of a countable nest has factorization and is reflexive.

To this end, let $\E$ be a complete nest in $\bh$. Let $\{r_n\}$ be the collection of all atoms of $\E$, and let $r=\sum_nr_n$ in WOT convergence. If $r\neq 1$, then it is straightforward to check that the nest $\{p\wedge r^\perp;\; p\in\E\}$ in $\mathcal{B}(\ran(r^\perp))$ is complete and  has no atom (such nests without any atom are called {\em continuous}). But then any continuous complete nest has to be uncountable (in fact indexed by $[0,1]$; see Lemma 13.3 in \cite{Da}). In particular, if the nest $\E$ is countable, then $r=1$ and hence $\E$ is atomic. Thus we have the following lemma:

\begin{lemma}\label{Lat A is atomic}
Let an algebra $\A$ have factorization  in $\bh$. Then $\Lat\A$ is an atomic nest.
\end{lemma}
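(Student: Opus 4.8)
The plan is to combine Theorem \ref{lattices of logmodular algebras} (really its factorization version, Corollary \ref{lattices of algebras with factorization}) with the completeness/atomicity discussion that immediately precedes the statement. First I would observe that since $\A$ has factorization in $\bh$, it is in particular logmodular in $\bh=\M$, which is a factor; hence by the main theorem $\Lat\A$ is a nest. Moreover $\Lat\A$ is always complete, being closed under arbitrary $\vee$, $\wedge$, and containing $0,1$. So $\Lat\A$ is a complete nest, and it remains only to prove it is atomic.

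The crux is to show the nest is countable, for then the paragraph preceding the lemma already does the work: a countable complete nest is atomic (if $r=\sum_n r_n\neq 1$, the nest $\{p\wedge r^\perp\}$ on $\ran(r^\perp)$ would be a continuous complete nest, hence uncountable by Lemma 13.3 of \cite{Da}, contradicting countability). To get countability I would argue exactly as in the discussion following Corollary \ref{strenghtening of Larson result}: since $\A\subseteq\Alg\Lat\A$ and $\A$ has factorization, the larger algebra $\Alg\Lat\A$ also has factorization in $\bh$ (any factorization $x=a^*a$ with $a\in\A^{-1}$ is in particular a factorization with $a\in(\Alg\Lat\A)^{-1}$). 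Now $\Lat\A$ is a complete nest and $\Alg\Lat\A$ has factorization, so Larson's Theorem \ref{Larson's result} forces $\Lat\A$ to be countable.

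Putting these together: $\Lat\A$ is a complete nest that is countable, and by the preceding paragraph countability implies atomicity. Hence $\Lat\A$ is an atomic nest, as claimed.

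I do not expect a genuine obstacle here, since the statement is essentially a bookkeeping corollary assembling three prior facts (the main theorem giving ``nest,'' Larson's theorem giving ``countable,'' and the preceding paragraph giving ``countable complete $\Rightarrow$ atomic''). The only point requiring a word of care is the passage $\A\subseteq\Alg\Lat\A$ together with the preservation of factorization under enlarging the algebra; one must note that enlarging the algebra only enlarges $\A^{-1}$, so the set $\{a^*a\}$ can only grow, and since it already equals $\M_+^{-1}$ for $\A$ it certainly equals $\M_+^{-1}$ for $\Alg\Lat\A$. That verification is immediate, so the proof is short.
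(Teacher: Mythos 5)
Your proof is correct and follows essentially the same route as the paper: the paper's own proof simply cites Corollary \ref{strenghtening of Larson result} (whose derivation is exactly your combination of Corollary \ref{lattices of algebras with factorization}, the inclusion $\A\subseteq\Alg\Lat\A$ preserving factorization, and Theorem \ref{Larson's result}) and then invokes the countable-complete-implies-atomic argument from the preceding paragraph. Your unpacking of that corollary into its constituent steps is accurate, so no gap.
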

\begin{proof}
Since $\A$ has factorization in $\bh$, $\Alg\Lat\A$  also has factorization in $\bh$ as it contains $\A$. Consequently $\Lat\A$  is a countable nest by Corollary \ref{strenghtening of Larson result},  so it is atomic.
\end{proof}

\begin{theorem}\label{algebras with finite atoms are reflexive}
Let $\A$ be a weakly closed algebra having factorization  in $\bh$. If all atoms of the lattice $\Lat\A$ has finite dimensional range, then $\A$ is reflexive and hence $\A$ is a nest algebra.
\end{theorem}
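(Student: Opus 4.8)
The plan is to verify the hypotheses of the Radjavi--Rosenthal criterion quoted above: since $\A$ is already weakly closed and $\Lat\A$ is a nest (being the lattice of an algebra with factorization in the factor $\bh$, by Corollary \ref{lattices of algebras with factorization}), it suffices to produce a masa sitting inside $\A$. Everything will be assembled from the atoms of $\Lat\A$. By Lemma \ref{Lat A is atomic} the nest $\Lat\A$ is atomic, so I fix its atoms $\{r_n\}$, a finite or countable family of mutually orthogonal projections with $\sum_n r_n=1$ in WOT; by hypothesis each $\ran(r_n)$ is finite dimensional.

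First I would put each atom, and in fact each full corner, into $\A$. Writing an atom as $r_n=p-p_-$ with $p\in\Lat\A$, both $p$ and $p_-$ lie in the complete nest $\Lat\A$, and $r_n$ has finite dimensional range, so Lemma \ref{atoms of finite range are in A} gives $r_n\in\A$. Consequently $r_n\A r_n\subseteq\A$ (each $r_n a r_n$ is a product of three elements of $\A$), and by Proposition \ref{logmodularity  is preserved on compression} it is a subalgebra of the finite dimensional algebra $r_n\bh r_n=\B(\ran(r_n))$ containing its identity $r_n$. Applying Lemma \ref{lattices of compressed algebras} to the consecutive pair $p_-<p$ shows that $r_n\A r_n$ is $r_n\bh r_n$-transitive, i.e. it has no nontrivial invariant subspace in $\ran(r_n)$. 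Burnside's theorem then forces $r_n\A r_n=\B(\ran(r_n))$, and hence the entire corner $r_n\bh r_n$ lies in $\A$.

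Next I would assemble the masa. Choose an orthonormal basis of each $\ran(r_n)$ and concatenate these to an orthonormal basis of $\h$ (legitimate since $\bigoplus_n\ran(r_n)=\h$), and let $\D$ be the diagonal masa of $\bh$ for this basis. Any $d\in\D$ commutes with every $r_n$, so $r_nd=r_ndr_n\in r_n\bh r_n\subseteq\A$, and the truncations $\sum_{n\le N}r_nd=(\sum_{n\le N}r_n)d$ converge to $d$ in SOT. Since these truncations lie in $\A$, and $\A$, being a weakly closed subspace, is SOT closed, we conclude $d\in\A$. Thus $\D\subseteq\A$, so $\A$ contains a masa, and the Radjavi--Rosenthal criterion yields that $\A$ is a nest algebra; being a nest algebra it equals $\Alg\Lat\A$ and is therefore reflexive.

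The routine ingredients — the atoms lying in $\A$ and the transitivity computation — are already packaged in the earlier lemmas, so the delicate point is the last step: the masa $\D$ is an infinite ``direct sum'' of the finite dimensional diagonal pieces and need not be a norm limit of elements of $\A$. Here the extra hypothesis that $\A$ is weakly closed is essential, since it lets me replace norm convergence by SOT convergence of the truncations $(\sum_{n\le N}r_n)d$; this is precisely where the argument would break down for a merely norm-closed algebra with factorization.
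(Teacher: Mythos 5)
Your proposal is correct and follows essentially the same route as the paper's own proof: atoms lie in $\A$ by Lemma \ref{atoms of finite range are in A}, compression to an atom is transitive by Lemma \ref{lattices of compressed algebras}, Burnside gives the full corner $r_n\bh r_n\subseteq\A$, and a masa built blockwise over the atoms lands in $\A$ by weak closure, so the Radjavi--Rosenthal criterion applies. Your diagonal masa with respect to a concatenated basis and the SOT-convergent truncations are just a concrete rendering of the paper's $\mathcal{L}=\bigoplus_i\mathcal{L}_i$ summed in WOT.
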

\begin{proof}
We shall show that $\A$ contains a masa. As noted above, this claim along with the fact that $\Lat\A$ is a nest (from Corollary \ref{lattices of algebras with factorization}) will imply  the required  assertion that $\A$ is reflexive and a nest algebra (see Theorem 9.24,  \cite{RaRo}). Let $\{r_i\}_{i\in\Lambda}$ be the collection of all the atoms of $\Lat\A$ for some at most countable indexing set $\Lambda$. Since $\Lat\A$ is atomic from Lemma \ref{Lat A is atomic}, it follows that $\sum_{i\in\Lambda}r_i=1$ in WOT;  hence $\h=\oplus_{i\in\Lambda}\h_i$, where $\h_i=\ran(r_i)$ which satisfies $\h_i\perp\h_j$ for all $i\neq j$. For each $i\in\Lambda$ since  $r_i$ is an atom, we  note that  $r_i=p_i-q_i$ for some $p_i, q_i\in\Lat\A$ (where $q_i={p_i}_-$), and since $r_i$ has finite dimensional range by hypothesis, it follows from Lemma \ref{atoms of finite range are in A} that $r_i\in\A$.

Now  recognize  the von Neumann algebra $r_i \bh r_i$ with $\B(\h_i)$ for each $i\in\Lambda$. Since $r_i$ is an atom, we know from  Lemma \ref{lattices of compressed algebras} that $r_i\A r_i$ is a transitive subalgebra of $\B(\h_i)$. Therefore, as $\h_i$ is finite-dimensional,  it follows from Burnside's Theorem (Corollary 8.6, \cite{RaRo}) that $r_i\A r_i=\B(\h_i)$. In particular, this implies that $r_i\bh r_i=r_i\A r_i$,  and since $r_i\in\A$, it follows that
\begin{align}\label{eq:r_i}
   r_i \bh r_i\subseteq\A.
\end{align}
Now for each $i$, let $\mathcal{L}_{i}$ be a masa in $\B(\h_i)$, and let $\mathcal{L}=\bigoplus_{i\in\Lambda}\mathcal{L}_{i}$, which is considered a subalgebra of $\bh$. It is clear that $\mathcal{L}$ is a masa in $\bh$. Note that $\mathcal{L}r_i=r_i\mathcal{L}$ for all $i\in\Lambda$.  Also it follows from \eqref{eq:r_i} that $\mathcal{L}_i=r_i\mathcal{L}r_i\subseteq\A$, and since $\A$ is WOT closed we have
$$\mathcal{L}=\sum_{i\in\Lambda}\mathcal{L}r_i=\sum_{i\in\Lambda}r_i\mathcal{L}r_i\subseteq\A,$$
where the sum above is taken in WOT. Thus we have shown our requirement that $\A$ contains a masa, completing the proof.
\end{proof}

A nest of projections on a Hilbert space is called {\em maximal or simple} if it is not contained in any larger nest. It is easy to verify that a nest $\E$ is maximal if and only if all atoms in  $\E$ are one-dimensional. Thus we have the following corollary.

\begin{corollary}\label{algebras whose lattice is maximal nest}
Let an algebra $\A$ have factorization in $\bh$, and let  $\Lat\A$ be a maximal nest. Then $\A$ is reflexive, and so it is a nest algebra.
\end{corollary}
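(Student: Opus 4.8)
The plan is to deduce this from Theorem~\ref{algebras with finite atoms are reflexive} by converting the maximality hypothesis into the finite-dimensional atom hypothesis of that theorem. The bridge is the elementary fact recorded just above the corollary: a nest is maximal precisely when all of its atoms are one-dimensional. So the first step is to extract the direction we need, namely that a maximal nest has only one-dimensional atoms. If some atom $r=p-p_-$ of $\Lat\A$ had range of dimension at least two, one could pick a proper nonzero subspace $M$ of $\ran(r)$ and adjoin the projection $p_-+P_M$ to $\Lat\A$; this produces a strictly larger nest and contradicts maximality. Hence every atom of $\Lat\A$ is one-dimensional and, in particular, has finite-dimensional range.

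With this observation in place the corollary is immediate. Since $\A$ has factorization in $\bh$, Corollary~\ref{lattices of algebras with factorization} guarantees that $\Lat\A$ is a nest, and by the previous step all its atoms are finite-dimensional, so Theorem~\ref{algebras with finite atoms are reflexive} applies verbatim and yields that $\A$ is reflexive and hence a nest algebra. It is worth noting that in this one-dimensional regime the internal machinery of Theorem~\ref{algebras with finite atoms are reflexive} collapses: for each atom $r_i$ the corner $\B(\ran(r_i))$ is just $\C r_i$, so the transitivity-plus-Burnside argument routed through Lemma~\ref{lattices of compressed algebras} is trivial, each $r_i$ already lies in $\A$ by Lemma~\ref{atoms of finite range are in A}, and the masa one assembles is simply the diagonal algebra $\bigoplus_i\C r_i$ with respect to the orthonormal basis furnished by the atoms, which sum to $1$ by Lemma~\ref{Lat A is atomic}.

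The one point requiring care—and the only real obstacle—is the mismatch between the standing convention that algebras are merely norm closed and the explicit weak-closedness hypothesis of Theorem~\ref{algebras with finite atoms are reflexive}, which is used there precisely to pull the WOT-limit of the finite diagonal sums $\sum_{i\in F}\lambda_i r_i$ into $\A$. The safe route is to carry the weak-closedness hypothesis through, as in the theorem, so that the masa $\bigoplus_i\C r_i$ genuinely lands inside $\A$ and the Radjavi--Rosenthal criterion~\cite{RaRo} then delivers the equality $\A=\Alg\Lat\A$. Apart from this bookkeeping about closedness, no new estimate is needed, and the statement is a direct specialization of Theorem~\ref{algebras with finite atoms are reflexive}.
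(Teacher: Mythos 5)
Your proposal is correct and takes essentially the same route as the paper: the paper obtains this corollary precisely by observing that a maximal nest is one whose atoms are all one-dimensional (hence finite-dimensional) and then invoking Theorem~\ref{algebras with finite atoms are reflexive}. Your caveat about weak closedness is a fair one—the corollary as stated silently inherits that hypothesis from Theorem~\ref{algebras with finite atoms are reflexive}, and the paper glosses over this—but handling it by carrying the hypothesis through, as you do, is exactly what the paper's argument requires.
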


We emphasize the importance of above corollary in the following example.

\begin{example}
Consider the Hilbert space $ \h=\ell^2(\Gamma) $, for $\Gamma=\N$ or $\mathbb{Z}$, and let $\mathcal{A}$ be the reflexive algebra of upper-triangular matrices in $\bh$ with respect to the  canonical basis $\{e_n\}_{n\in\Gamma}$. Note that $\Lat\A=\{p_n; n\in\Gamma\}$, where $p_n$ is the projection onto the subspace $\ospan\{e_m; m\leq n\}$. Clearly $\Lat\A$ is a maximal nest. So if $\mathcal{B}$ is any subalgebra of $\A$ with $\Lat\B$ a nest, then $\Lat\A\subseteq\Lat\B$, which implies by maximality  that $\Lat\A=\Lat\B$. Thus it follows from Corollary \ref{algebras whose lattice is maximal nest} that the only subalgebra of  $\A$ that has factorization in $\bh$ is $\A$.
\end{example}

Next we consider the consequence of above results for subalgebras of finite dimensional von Neumann algebras. Let $M_n$ denote the algebra of all $n\times n$ complex matrices for some natural number $n$.  Let $\A$ be a logmodular subalgebra of $M_n$. It can easily be verified by the compactness argument of unit ball of $M_n$ that the algebra $\A$ also has factorization in $M_n$.  Since all atoms of $\Lat\A$ are clearly finite dimensional, it follows from Theorem \ref{algebras with finite atoms are reflexive}  that $\A$ is a nest algebra in $M_n$. Thus we have shown that upto unitary equivalence, $\A$ is an algebra of block upper-triangular matrices in $M_n$. This assertion was put as a conjecture in \cite{PaRa}, and an affirmative answer was given in \cite{Ju}. We have provided a different solution, and  we state it  below.

\begin{corollary}
Let $\A$ be a logmodular algebra in $M_n$. Then $\A$ is an algebra of block upper triangular matrices upto unitary equivalence.
\end{corollary}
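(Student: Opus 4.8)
The plan is to deduce this from Theorem \ref{algebras with finite atoms are reflexive} in three moves: promote logmodularity to factorization, invoke the reflexivity theorem to recognize $\A$ as a nest algebra, and then read off the block upper triangular form from the shape of a finite nest in $\C^n$.

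First I would observe that in the finite dimensional algebra $M_n$ logmodularity already forces factorization. Given a positive invertible $x\in M_n$, logmodularity supplies a sequence $\{a_k\}\subseteq\A^{-1}$ with $a_k^*a_k\to x$. The key point is that invertibility of the limit controls the sequence: from $\|a_k\|^2=\|a_k^*a_k\|\to\|x\|$ and $\|a_k^{-1}\|^2=\|(a_k^*a_k)^{-1}\|\to\|x^{-1}\|$, both $\{a_k\}$ and $\{a_k^{-1}\}$ are bounded, so by compactness of bounded sets in $M_n$ a subsequence converges to some $a$, with the corresponding inverses converging to some $b$ satisfying $ab=ba=1$. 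Then $a^*a=x$ and, since $\A$ is norm closed, both $a$ and $a^{-1}=b$ lie in $\A$, so $a\in\A^{-1}$. Hence $\{a^*a:a\in\A^{-1}\}=(M_n)_+^{-1}$ and $\A$ has factorization in $M_n$.

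Next, regarding $M_n$ as $\B(\C^n)=\bh$ with $\h=\C^n$, I would apply Theorem \ref{algebras with finite atoms are reflexive}. The algebra $\A$ is norm closed in the finite dimensional $M_n$ and hence weakly closed, and every atom of $\Lat\A$ has finite dimensional range simply because $\C^n$ is finite dimensional; thus the hypotheses are met. The theorem yields that $\A$ is reflexive and is a nest algebra, say $\A=\Alg\E$ for a necessarily finite nest $\E=\{V_0\subsetneq V_1\subsetneq\cdots\subsetneq V_m\}$ of subspaces of $\C^n$. Finally I would choose a unitary $U$ carrying the standard basis to an orthonormal basis adapted to this chain, so that in the new basis invariance of each $V_i$ is exactly upper triangularity with block sizes $\dim V_i-\dim V_{i-1}$; thus $U^*\A U$ is the algebra of block upper triangular matrices, which is the assertion.

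I do not expect a genuine obstacle, since the substantive work has already been carried out in the earlier theorems. The only step requiring any care is the compactness promotion of logmodularity to factorization, and even there the sole subtlety is noting that invertibility of the target $x$ is precisely what keeps $\{a_k^{-1}\}$ bounded and forces the limit $a$ to be invertible with inverse again in the norm closed algebra $\A$.
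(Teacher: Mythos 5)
Your proof is correct and takes essentially the same route as the paper: the paper also promotes logmodularity to factorization via compactness of the unit ball of $M_n$, then applies Theorem \ref{algebras with finite atoms are reflexive} (whose hypotheses hold automatically since $\C^n$ is finite dimensional, so $\A$ is weakly closed and every atom is finite dimensional), and finally identifies the resulting nest algebra with an algebra of block upper triangular matrices up to unitary equivalence. Your only addition is to spell out the compactness argument---boundedness of $\{a_k\}$ and $\{a_k^{-1}\}$, passage to a convergent subsequence, and norm-closedness of $\A$---which the paper leaves to the reader.
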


Moreover, the corollary above generalizes to any logmodular subalgebras of finite dimensional von Neumann algebras.

\begin{corollary}\label{logmodular subalgebra of finite dimensional algebra}
Let $\M$ be a finite dimensional von Neumann algebra, and let $\A$ be a logmodular algebra in $\M$. Then $\A$ is a nest subalgebra of $\M$, and $\A$ is $\M$-reflexive.
\end{corollary}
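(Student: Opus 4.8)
The plan is to reduce to the factor (full matrix algebra) case already settled above by splitting $\M$ along its centre, and then to reassemble the resulting block nests into a single nest. Since $\M$ is a finite-dimensional von Neumann algebra, write $\M=\bigoplus_{k=1}^N\M_k$, where $z_1,\dots,z_N$ are the minimal central projections (so $\sum_k z_k=1$ and $z_jz_k=0$ for $j\neq k$) and $\M_k=z_k\M z_k$ is a finite-dimensional factor, hence $*$-isomorphic to some $M_{n_k}$. Every element of $\M$ is block-diagonal, $x=\sum_k z_kx$ with $z_kx=z_kxz_k\in\M_k$, and each central $z_k$ lies in $\Lat_\M\A$. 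First I would record that in finite dimensions logmodularity automatically upgrades to factorization: given $h\in\M_+^{-1}$, logmodularity yields $a_n\in\A^{-1}$ with $a_n^*a_n\to h$; the sequences $\{a_n\}$ and (via $(a_n^*a_n)^{-1}\to h^{-1}$) $\{a_n^{-1}\}$ are bounded, so by compactness of the unit ball of the finite-dimensional $\M$ a subsequence of $\{a_n\}$ converges in the norm-closed $\A$ to some $a$ with $a^*a=h$ and $a^{-1}\in\A$. Thus $\A$ has factorization in $\M$.

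Next I would peel off the central blocks. Since $z_k$ has finite-dimensional range and $z_k\in\Lat_\M\A$, Lemma~\ref{atoms of finite range are in A} gives $z_k\in\A$. Therefore $\A_k:=z_k\A z_k=z_k\A\subseteq\A$, and from $a=\sum_k z_ka$ we obtain $\A=\bigoplus_{k=1}^N\A_k$. By Proposition~\ref{logmodularity  is preserved on compression}(1), each $\A_k$ is logmodular in the factor $\M_k$; transporting through the $*$-isomorphism $\M_k\cong M_{n_k}$ via Proposition~\ref{logmodularity are preserved under isomorphism} and invoking the case $\M=M_n$ treated above, each $\A_k$ is a nest subalgebra of $\M_k$, say $\A_k=\Alg_{\M_k}\E_k$ for a complete nest $\E_k$ in $\M_k$ with $0,z_k\in\E_k$.

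Finally I would reassemble. Put $w_0=0$ and $w_k=z_1+\cdots+z_k$, and define
$$\E=\bigcup_{k=1}^N\{\,w_{k-1}+e:\ e\in\E_k\,\}.$$
Each $w_{k-1}+e$ is a projection, and $\E$ is totally ordered: within a block the order is that of $\E_k$, while for $k<l$ we have $w_{k-1}+e\le w_k\le w_{l-1}\le w_{l-1}+e'$; hence $\E$ is a nest in $\M$. A direct block computation shows that for block-diagonal $x=\sum_k x_k$ the invariance $x(w_{k-1}+e)=(w_{k-1}+e)x(w_{k-1}+e)$ is equivalent to $x_ke=ex_ke$ (the cross terms vanish because $w_{k-1}$ and $e\le z_k$ annihilate the off-block pieces of $x$). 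Consequently $x\in\Alg_\M\E$ if and only if $x_k\in\Alg_{\M_k}\E_k=\A_k$ for every $k$, that is, $\Alg_\M\E=\bigoplus_k\A_k=\A$. Thus $\A=\Alg_\M\E$ is a nest subalgebra of $\M$, and since any algebra of the form $\Alg_\M\E$ is $\M$-reflexive, $\A$ is $\M$-reflexive as well.

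The only genuinely non-formal ingredient is obtaining $z_k\in\A$: because $\M$ is not a factor, $\Lat_\M\A$ is merely a commutative subspace lattice and always contains every central projection, so the factor results cannot be applied to $\A$ directly. The key move is therefore the compactness step upgrading logmodularity to factorization, which makes Lemma~\ref{atoms of finite range are in A} available and lets one split $\A$ as $\bigoplus_k\A_k$. Once the central blocks are isolated, the remaining work—linearly ordering the block nests into one nest and checking $\Alg_\M\E=\A$—is routine bookkeeping.
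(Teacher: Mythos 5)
Your proof is correct and follows essentially the same route as the paper: upgrade logmodularity to factorization by compactness, use Lemma~\ref{atoms of finite range are in A} to place the minimal central projections in $\A$, split $\A=\bigoplus_k\A_k$ over the factor blocks, settle each block by the matrix-algebra case, and reassemble via the same ``staircase'' nest $\{w_{k-1}+e:e\in\E_k\}$ that the paper calls $\mathcal{F}$. The only (cosmetic) difference is that you conclude $\M$-reflexivity from the general fact that algebras of the form $\Alg_\M\E$ are $\M$-reflexive, whereas the paper spells out the inclusion $\Alg_\M\Lat_\M\A\subseteq\Alg_\M\mathcal{F}=\A$ explicitly.
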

\begin{proof}
Since $\M$ is a finite dimensional von Neumann algebra, there exist  natural numbers $n_1,\ldots, n_k$ such that $\M$ is $*$-isomorphic to $M_{n_1}\oplus\ldots\oplus M_{n_k}$. In view of Proposition \ref{logmodularity are preserved under isomorphism}, we assume without loss of generality that $\M=M_{n_1}\oplus\ldots\oplus M_{n_k}$, which  acts on the Hilbert space $\h=\C^{n_1}\oplus\ldots\oplus\C^{n_k}$. Using compactness of the unit ball in  finite dimensional algebras, we note that $\A$ also has factorization  in $\M$.

Now for $i=1,\ldots,k$, let $p_i$ denote the orthogonal projection of $\h$ onto the subspace $\C^{n_i}$ (considered as a subspace of $\h$), and let $\A_i=p_i\A p_i$. We claim that $\A=\A_1\oplus\ldots\oplus\A_k$. Firstly note that $p_i\in\M\cap \M'$; hence $p_i\in\Lat_\M\A$. This in particular says that $\A_i$ is an algebra. Since $p_i$ has finite dimensional range, it follows from Lemma \ref{atoms of finite range are in A} that $p_i\in\A$. This implies that $\A_i\subseteq\A$ for each $i$; hence we have $\A_1\oplus\ldots\oplus\A_k\subseteq\A$. On the other hand, since   $\sum_{i=1}^kp_i=1$ we get
\begin{align*}
    \A=\A\sum_{i=1}^kp_i\subseteq\sum_{i=1}^k\A p_i=\sum_{i=1}^kp_i\A p_i=\oplus_{i=1}^k\A_i,
\end{align*}
proving our claim that $\A=\oplus_{i=1}^k\A_i$. Now if we recognize $M_{n_i}$ as a subalgebra of $\M$ ($*$-isomorphic to $p_i\M p_i$) for each $i$, then the algebra $\A_i$ has factorization in $M_{n_i}$ by Proposition \ref{logmodularity  is preserved on compression}; hence  the lattice  $\E_i=\Lat_{M_{n_i}}\A_i$ is a nest in $\C^{n_i}$ by Corollary \ref{lattices of algebras with factorization}, and that $\A_i=\Alg_{M_{n_i}}\E_i$ by Theorem \ref{algebras with finite atoms are reflexive}. Now consider the lattice
$$\E=\oplus_{i=1}^k\E_i=\{\oplus_{i=1}^kq_i;\; q_i\in\E_i,1\leq i\leq k\}$$
in $\M.$ It is clear that $\E=\Lat_\M\A$, which implies $\A\subseteq\Alg_\M\E$. Note that $\E$ is not a nest if $k\geq2$. Choose a sublattice, namely $\F$, of $\E$ such that $\F$ is a nest and each element $q_i$ in $\E_i$ appears at least once as the $i$th coordinate of an element of $\F$. Such $\F$ can always be chosen: for example consider the nest $\mathcal{F}_i$ for each $i$ given by
$$\mathcal{F}_i=\{e_1\oplus\ldots\oplus e_{i-1}\oplus q_{i}\oplus 0\oplus\ldots\oplus 0;\; q_i\in\E_i\}\subseteq\E,$$
where $e_i$ denotes the identity of $M_{n_i}$, and let $\mathcal{F}=\cup_i^k\mathcal{F}_i$. Since each $\E_i$ is a nest and $\mathcal{F}_1\subseteq\mathcal{F}_2\subseteq\ldots\subseteq\mathcal{F}_k$,  it follows that the sublattice $\mathcal{F}$ is a nest in $\M$, and note that $\mathcal{F}$ fulfils the requirement.

We claim that $\A=\Alg_\M\F$, which will prove that $\A$ is a nest subalgebra of $\M$. Clearly as $\mathcal{F}\subseteq\mathcal{E}$, we have $\A\subseteq \Alg_\M\E\subseteq\Alg_\M\mathcal{F}$. Conversely let $x\in\Alg_\M\F$, and let $x=\oplus_{i=1}^kx_i$ for some $x_i\in M_{n_i}$,  $1\leq i\leq k.$ The way $\mathcal{F}$ has been chosen, each element of $\E_i$ appears as the $i$th coordinate of some element of $\F$, so it follows that $ x_{i}q= qx_iq$ for all $q\in\E_i$ and for each $1\leq i\leq k$. This shows that $x_i\in \Alg_{M_{n_i}}\E_i=\A_i$; hence $x\in\A$. Thus we conclude that $\Alg_\M\mathcal{F}\subseteq\A$ proving the claim.

Finally since   $\F\subseteq\E=\Lat_\M\A$, it follows that $\Alg_\M\Lat_\M\A\subseteq\Alg_\M\F=\A$. Since the other inclusion is obvious, we have  $\A=\Alg_\M\Lat_\M\A$ which is to say that $\A$ is $\M$-reflexive.
\end{proof}

More generally, Corollary \ref{logmodular subalgebra of finite dimensional algebra} can easily be extended to any direct sum of finite-dimensional von Neumann algebras, whose proof goes on the same lines. We record the statement here.

\begin{corollary}\label{logmodular algberas of direct sum of finite dimensional algebras}
Let $\M$ be a (possibly countably infinite) direct sum of finite dimensional von Neumann algebras, and let $\A$ be a logmodular algebra in $\M$. Then $\A$ is a nest subalgebra of $\M$ and $\A$ is $\M$-reflexive.
\end{corollary}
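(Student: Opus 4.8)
The plan is to reduce, via Proposition \ref{logmodularity are preserved under isomorphism}, to the explicit model $\M=\bigoplus_{i\in\N}M_{n_i}$ acting on $\h=\bigoplus_{i\in\N}\C^{n_i}$, and then to analyse $\A$ block by block exactly as in the proof of Corollary \ref{logmodular subalgebra of finite dimensional algebra}, the only genuinely new point being that the compactness shortcuts available for finitely many blocks must be replaced by limiting arguments that keep us inside the norm-closed algebra $\A$. Write $p_i$ for the central projection onto the $i$-th summand and $P_k=\sum_{i\le k}p_i$. Since each $p_i$ is central in $\M$ we have $p_i,P_k\in\Lat_\M\A$, so by Proposition \ref{logmodularity  is preserved on compression} the compressions $\A_i:=p_i\A p_i=\A p_i$ are logmodular in the finite-dimensional algebras $p_i\M p_i\cong M_{n_i}$; hence by Corollary \ref{logmodular subalgebra of finite dimensional algebra} each $\A_i$ is the nest subalgebra $\Alg_{M_{n_i}}\E_i$, where $\E_i=\Lat_{M_{n_i}}\A_i$ is a nest.

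The first key step is to show $p_i\in\A$, equivalently $P_k\in\A$ for every $k$. Here I cannot directly invoke Lemma \ref{atoms of finite range are in A}, because logmodularity of $\A$ in the infinite-dimensional $\M$ need not upgrade to factorization (the unit ball of $\M$ is no longer norm compact). Instead I would apply logmodularity to the positive invertible elements $Y_m=P_k+\tfrac1m P_k^{\perp}$: choosing $a^{(m)}\in\A^{-1}$ with $\|(a^{(m)})^*a^{(m)}-Y_m\|<\tfrac1m$ and using that every element of $\A\subseteq\M$ is block diagonal, one writes $a^{(m)}=b_m\oplus c_m$ with $b_m\in P_k\M P_k$ invertible and $c_m\in P_k^{\perp}\M P_k^{\perp}$. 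Then $\|c_m^*c_m\|\to0$ forces $c_m\to0$, while $b_m^*b_m\to P_k$ in the \emph{finite-dimensional} corner $P_k\M P_k$; passing to a subsequence, the polar parts of $b_m$ converge (compactness of the unitary group in finite dimensions) to a unitary $u$ of $P_k\M P_k$, whence $a^{(m)}\to u\oplus0$ in norm and so $u\oplus0\in\A$. Finally, the standard fact that the identity lies in the norm closure of $\{u^{\,j}:j\ge1\}$ for a unitary $u$ of a finite-dimensional algebra gives $P_k=\lim_j(u\oplus0)^{\,j}\in\A$, and therefore $p_i=P_i-P_{i-1}\in\A$ (with $P_0=0$).

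Once $p_i\in\A$ we obtain $\A_i\subseteq\A$, so $\A\subseteq\mathcal{D}:=\bigoplus_{i}\A_i$ (the $\ell^\infty$-direct sum of the block nest algebras); and since the $i$-th block of a general $a\in\A$ then ranges over all of $\A_i$, a direct computation gives $\Lat_\M\A=\prod_i\E_i$ and hence $\Alg_\M\Lat_\M\A=\mathcal{D}$. Thus $\A$ is $\M$-reflexive precisely when $\A=\mathcal{D}$. Choosing, just as in the proof of Corollary \ref{logmodular subalgebra of finite dimensional algebra}, a single nest $\F\subseteq\prod_i\E_i$ in which every level of every $\E_i$ is represented (now a countable amalgamation of the nests $\E_i$), one checks $\Alg_\M\F=\mathcal{D}$; so the whole statement comes down to the inclusion $\mathcal{D}\subseteq\A$.

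I expect this last inclusion to be the main obstacle, and it is intrinsically infinite-dimensional. Since $\A$ is only assumed norm closed, the finite truncations $\sum_{i\le N}a_i+P_N^{\perp}$ of an element $a=(a_i)\in\mathcal{D}$ lie in $\A$ but converge to $a$ only strongly, not in norm, as soon as the blocks $a_i$ do not tend to the identity. Reducing (using $\mathcal{D}=\mathcal{D}^{-1}-\mathcal{D}^{-1}$ and $1\in\A$) to proving $\mathcal{D}^{-1}\subseteq\A$, logmodularity applied to $a^*a$ for $a\in\mathcal{D}^{-1}$ only produces $b^{(m)}\in\A$ with $b^{(m)}=v^{(m)}a+o(1)$ for block-diagonal \emph{unitaries} $v^{(m)}$ of $\M$ (here $b^{(m)}a^{-1}$ is asymptotically unitary by Lemma \ref{in limits, invertible isometries behave like unitaries}); because the unitary group of $\M$ is not norm compact one cannot pass to a limit removing $v^{(m)}$, and the block-wise rigidity of nest-algebra factorization that would replace this compactness is not obviously uniform in the sizes $n_i$. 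The plan is therefore to control these unitaries through the finite central corners $P_k\A P_k$, which do have factorization and in which the compact-group argument above applies, and to show that the tail contributions can be made norm-small uniformly in the blocks. Making this tail estimate uniform across the summands is the crux on which the argument turns, and it is where I would concentrate the effort.
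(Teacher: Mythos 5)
Your reductions are all correct, and they are in fact more careful than the paper itself, which offers no argument for this corollary beyond the remark that the proof of Corollary \ref{logmodular subalgebra of finite dimensional algebra} ``goes on the same lines.'' In particular, your proof that each central projection $p_i$ lies in $\A$ (approximate $P_k+\tfrac1m P_k^\perp$, pass to a norm limit $u\oplus 0\in\A$ with $u$ unitary in the finite-dimensional corner, then use recurrence of unitary powers) is valid, as are the identifications $\Lat_\M\A=\prod_i\E_i$ and $\Alg_\M\Lat_\M\A=\bigoplus_i\A_i=:\mathcal{D}$, and the reduction of the whole statement to the inclusion $\mathcal{D}\subseteq\A$.

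However, the obstacle you flagged at that last step is not a technical difficulty to be engineered around: under the paper's standing convention that algebras are only \emph{norm} closed, the inclusion $\mathcal{D}\subseteq\A$ is false, and with it the corollary as stated. Take $\M=\bigoplus_{n\in\N}M_2$ and
\begin{align*}
\A=\left\{(a_n)\in\M:\ \lim_{n\to\infty}(a_n)_{21}=0\right\},
\end{align*}
the bounded sequences of $2\times 2$ matrices whose lower-left entries tend to $0$. This is a unital norm-closed subalgebra, since $\bigl((a_nb_n)\bigr)_{21}=(a_n)_{21}(b_n)_{11}+(a_n)_{22}(b_n)_{21}$, and it even has \emph{factorization} in $\M$: if $x=(x_n)$ is positive and invertible, blockwise Cholesky factorization gives upper triangular invertible $T_n$ with $x_n=T_n^*T_n$, and the uniform bounds $\|T_n\|^2=\|x_n\|\leq\|x\|$, $\|T_n^{-1}\|^2=\|x_n^{-1}\|\leq\|x^{-1}\|$ show $T=(T_n)\in\A^{-1}$ with $x=T^*T$. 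Yet $\A\neq\M$, and since every element of $M_2$ occurs as the $m$-th block of an element of $\A$ (single-block elements lie in $\A$), one checks that $\Lat_\M\A$ consists exactly of the central projections $(q_n)$, $q_n\in\{0,1\}$; hence $\Alg_\M\Lat_\M\A=\M\neq\A$, so this $\A$ is neither $\M$-reflexive nor a nest subalgebra of $\M$. So the uniform tail estimate you hoped to prove cannot exist; your sticking point is precisely where the paper's ``same lines'' claim also breaks down, because an infinite sum of blocks converges only strongly, never in norm.

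The statement does become true --- and your own argument then finishes it --- if one adds the hypothesis that $\A$ is weakly closed, as in Theorem \ref{algebras with finite atoms are reflexive}: once $p_i\in\A$ and hence $\A_i\subseteq\A$, the truncations $\sum_{i\leq N}a_i$ of any $a=(a_i)\in\mathcal{D}$ lie in $\A$ and converge to $a$ strongly, so weak closure gives $\mathcal{D}\subseteq\A$, and your amalgamated nest $\F$ completes the proof. (The counterexample above is of course not weakly closed; its weak closure is all of $\M$.) In finite dimensions this distinction is invisible, which is presumably how the error crept into the paper.
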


In general, Corollary \ref{logmodular subalgebra of finite dimensional algebra} fails to be true for algebras having factorization (or logmodularity) in  infinite dimensional von Neumann algebra, as the following example suggests.

\begin{example}\label{an example of a subdiagonal algebra}
Let  $\A$ be an algebra having factorization in a von Neumann algebra $\M$  such that $\A\neq \M$ and $\mathcal{D}=\A\cap\A^*$ is a factor.  We claim that $\A$ is not $\M$-reflexive. Assume otherwise that $\A=\Alg_\M\Lat_\M\A$. Note that $\Lat_\M\A\subseteq\mathcal{D}$. Also it is easy to verify that $\Lat_\M\A\subseteq\mathcal{D}'$ and thus we have $\Lat_\M\A\subseteq \mathcal{D}\cap\mathcal{D}'=\C$. It then  follows that  $\Lat_\M\A=\{0,1\}$, so $\A=\Alg_\M\{0,1\}=\M$ which is  not true.

There are plenty of such algebras. To see one, let $G$ be a countable discrete ordered  group. Let $\ell^2(G)=\{f:G\to\mathbb{C}; \sum_{g\in G}|f(g)|^2<\infty\}$, and for each $g\in G$, let $U_g:\ell^2(G)\to \ell^2(G)$ be the unitary operator defined by $U_gf(g')=f(g^{-1}g')$ for $f\in \ell^2(G)$ and $g'\in G$. Let $\M$ be the finite von Neumann algebra in $\B(\ell^2(G))$ generated by the family $\{U_g\}_{g\in G}$, called the group von Neumann algebra of $G$. Note that each element $X$ of $\B(\ell^2(G))$ has a matrix representation $(x_{gh})$ with respect to the canonical basis of $\ell^2(G)$. Let
$$\A=\{X=(x_{gh})\in \M; x_{gh}=0\mbox{ for } g>h\}.$$
Then $\A$ is an example of a {\em finite maximal subdiagonal algebra} in $\M$ with respect to the expectation $\phi:\M\to\M$ given by $\phi((x_{gh}))=x_{ee}1$ for $(x_{gh})\in\M$, where $e$ denotes the identity of $G$ (see Example 3, \cite{Ar1}). In particular, $\A$ has factorization in $\M$ (Theorem 4.2.1, \cite{Ar1}). But note that $\A\cap\A^*=\C$ (in fact if $X=(x_{gh})\in\A\cap\A^*$, then $x_{gh}=0$ for all $g\neq h$ and $x_{gg}=x_{g'g'}$ for all $g,g'\in G$), so $\A$ cannot be $\M$-reflexive as discussed above. Moreover, we can  choose the ordered group $G$ to be  countable  with ICC property (e.g. $G=\mathbb{F}_2,$ the free group on two generators), so that $\M$ is a factor. In this case although $\Lat_\M\A$ is a nest, $\A$ cannot be a nest subalgebra of $\M$, otherwise  $\A\cap\A^* $ will contain the nest and so cannot be equal to $\C$.
\end{example}

\section{Concluding remarks}

In this paper we have discussed `universal' or `strong' factorization property for  subalgebras of  von Neumann algebras. But there are weaker notions of factorization which can also be explored. Say a subalgebra $\A$ has {\em weak factorization property (WFP)} in a von Neumann algebra $\M$ if for any positive element $x\in\M$, there is an element $a\in\A$ such that $x=a^*a$. Here the invertibility assumption is relaxed.

Power \cite{Po} has  studied WFP of nest algebras where he  proved that if a nest $\E$ of projections on a Hilbert space $\h$ is well-ordered (i.e. $p\neq p_+=\cap_{q>p}q$ for all $p\in\E$ and $p\neq 1$), then $\Alg\E$ has WFP in $\bh$. Inspired from our result on lattices of algebras with factorization, it appears that lattices of  algebras with WFP in a factor  should also be a nest. But it is not clear to us at this point. However, for a subalgebra in a finite von Neumann algebra we can certainly say so.
We can follow the similar lines of proof along with the fact  that any left (or right) invertible element in a finite von Neumann algebra is invertible. We record it here.

\begin{theorem}
Let $\A$ be a subalgebra  of a finite von Neumann algebra (resp. factor) $\M$ satisfying WFP. Then $\Lat_\M\A$ is a commutative subspace lattice (resp. nest). 
\end{theorem}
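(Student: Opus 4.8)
The plan is to follow the proof of Theorem~\ref{lattices of logmodular algebras} almost line by line, with one systematic substitution. In the logmodular setting every positive invertible $x$ was written as a \emph{limit} $x=\lim_n a_n^*a_n$ with $a_n,a_n^{-1}\in\A$, and the inverses $a_n^{-1}$ were used repeatedly to invert compressions such as $q\tilde a_n q$ or $pqa_npq$. Under WFP one instead obtains an \emph{exact} factorization $x=a^*a$ with a single $a\in\A$, but loses the hypothesis $a^{-1}\in\A$. The point is that finiteness of $\M$ recovers exactly what is needed: if $x$ is moreover invertible then $a^*a\geq\delta>0$, so $a$ is bounded below, hence left invertible, hence invertible in $\M$ because $\M$ is finite; and any $b$ in a finite von Neumann algebra with $b^*b=1$ is an isometry, hence a unitary. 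These two facts replace every appeal to $a_n^{-1}\in\A$.

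First I would establish the WFP analogue of Proposition~\ref{logmodularity  is preserved on compression}: if $\A$ has WFP in a finite $\M$ and $p\geq q$ lie in $\Lat_\M\A$, then $(p-q)\A(p-q)$ has WFP in $(p-q)\M(p-q)$, and similarly for $p\A p$ and $p^\perp\A p^\perp$. Running the original argument, given positive $x\in(p-q)\M(p-q)$ set $\tilde x=x+q+p^\perp$ and factor $\tilde x=\tilde a^*\tilde a$ with $\tilde a\in\A$. Since $q\in\Lat_\M\A$ and $q\tilde x q=q$, one gets $(q\tilde a q)^*(q\tilde a q)=q$, so $q\tilde a q$ is an isometry in the finite algebra $q\M q$ and therefore a unitary; this supplies the bounded inverse of $q\tilde a q$ that the original proof extracted from $\tilde a_n^{-1}$. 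The remaining computation is unchanged and produces $x=a^*a$ with $a=(p-q)\tilde a(p-q)\in(p-q)\A(p-q)$. Note also that WFP passes to every larger subalgebra, so $\Alg_\M\{p,q\}$ has WFP whenever $p,q\in\Lat_\M\A$.

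Next I would prove the WFP versions of Lemmas~\ref{subspaces with logmodularity properties cannot be orthogonal}, \ref{the main result for factors} and~\ref{subspaces with logmodularity properties cannot be in generic position}. The orthogonality lemma is immediate and needs neither finiteness nor invertibility: with $x=1+\epsilon(v+v^*)$ and $x=a^*a$, $a\in\Alg_\M\{p,q\}$, the identities $ap=pap$, $qa^*=qa^*q$ and $pq=0$ give $\epsilon v=qxp=qa^*q\,pq\,ap=0$, a contradiction. For the generic-position lemma I apply WFP to the same positive invertible $Z$ constructed there, obtaining $Z=S^*S$ with $S=\bigl[\begin{smallmatrix}a&b\\0&c\end{smallmatrix}\bigr]$; comparing entries now yields the \emph{exact} equalities $a^*a=1$ and $c^*c=1$, so by finiteness $a$ and $c$ are unitaries, and $a^*b=\alpha$ forces $b=\alpha a$ exactly (the error terms $d_n,t_n$ of the original proof vanish). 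Then $xcy=yax+yby$ reads $ya=xcy\,u^{-1}$ with $u=x+\alpha y$, and using $aa^*=1$, $cc^*=1$ and $y^2u^{-2}\leq\alpha^{-2}$ one gets $y^2\leq\alpha^{-2}x^2$; letting $\alpha\to\infty$ gives $y=0$. The factor lemma~\ref{the main result for factors} is handled the same way: the comparison theorem supplies $v$ with $v^*v\leq pq^\perp$, $vv^*\leq p^\perp q$, WFP gives $x=a^*a$ for $x=1+\epsilon(v+v^*)$, and the only place invertibility was used---inverting $pqapq$---is recovered because $a$ is bounded below, so $pqapq$ is bounded below on the invariant subspace $\ran(pq)$, hence invertible in the finite algebra $pq\M pq$.

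With these lemmas in hand the two conclusions follow exactly as before. Given $p,q\in\Lat_\M\A$, set $r=(p\wedge q)\vee(p^\perp\wedge q^\perp)$; by the compression step $r^\perp\A r^\perp$ has WFP in the finite algebra $r^\perp\M r^\perp$, and the reduction of the original proof lets me assume $p\wedge q=0=p^\perp\wedge q^\perp$. If $p$ and $q$ failed to commute their generic part would be non-zero by Proposition~\ref{structure of any two subspaces}, and the $4\times4$ block computation of the main proof---now with exact factorizations and with the diagonal blocks unitary by finiteness---would force the contradiction $y=0$ exactly as in the generic-position lemma. Hence $\Lat_\M\A$ is a CSL. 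When $\M$ is in addition a factor (necessarily a finite factor here), the orthogonality and factor lemmas show that any commuting pair in $\Lat_\M\A$ is comparable, so the CSL is a nest. The only genuine obstacle is bookkeeping the invertibility that previously came for free from $a_n^{-1}\in\A$: every such use must be re-derived from ``bounded below $\Rightarrow$ invertible'' and ``isometry $\Rightarrow$ unitary'' in a finite von Neumann algebra, and in each instance one must verify that the relevant compression really is bounded below, or really is an isometry.
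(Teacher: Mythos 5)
Your proposal is correct and is essentially the proof the paper intends: the paper records only the one-line remark that one can ``follow the similar lines of proof along with the fact that any left (or right) invertible element in a finite von Neumann algebra is invertible,'' and your write-up is a faithful execution of exactly that plan, with exact factorizations replacing limits and with ``isometry $\Rightarrow$ unitary'' and ``bounded below $\Rightarrow$ invertible'' in finite corners standing in for every use of $a_n^{-1}\in\A$. The only minor imprecision is in the $4\times4$ step, where it is the diagonal \emph{entry} $d=P_2SP_2$ (which lies in the finite corner $P_2\M P_2$, since $P_2=p-p\wedge q^\perp\in\M$), rather than the whole diagonal block $A$, that must be promoted from isometry to unitary after first showing the entry $b=0$ -- exactly parallel to what you did for $a$ and $c$ in the generic-position case, so no real gap.
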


So a natural question is the following:

\begin{question}
Is the lattice of a subalgebra satisfying WFP in a von Neumann algebra (resp. factor) is a commutative subspace lattice (resp. nest)?
\end{question}

We conclude with the question of reflexivity of algebras with factorization. We showed that an algebra with factorization in $\bh$ has masa and hence is reflexive if we impose some dimensionality condition on the atoms of its lattice. But we still do not know whether every algebra with factorization in $\bh$ has masa. Thus the following question is open.

\begin{question}
Is a weakly closed algebra having factorization in $\bh$  automatically reflexive? In particular, is a weakly closed transitive algebra with factorization  equal to $\bh$?
\end{question}

\begin{acknowledgment*} The first author thanks SERB(India) for financial support through J C Bose Fellowship.

\end{acknowledgment*}


\begin{thebibliography}{00}

\bibitem{AnKa} M. Anoussis and  E. Katsoulis, {\em Factorisation in nest algebras}, Proc. Amer. Math. Soc. 125 (1997), no. 1, 87–92.

\bibitem{Ar1} W. B. Arveson, {\em Analyticity in operator algebras}, Amer. J. Math. 89 (1967), 578–642.

\bibitem{Ar2} W. Arveson, {\em Interpolation problems in nest algebras}, J. Funct. Anal. 20 (1975), no. 3, 208–233.

\bibitem{BlLa} D.P. Blecher and L.E. Labuschagne, {\em Logmodularity and isometries of operator algebras}, Trans. Amer. Math. Soc. 355 (2003), no. 4, 1621–1646.

\bibitem{BlLa1} D.P. Blecher and L.E. Labuschagne, {\em Von Neumann algebraic $H^p$ theory}, Function spaces, 89–114, Contemp. Math., 435, Amer. Math. Soc., Providence, RI, 2007.

\bibitem{Co} J.B. Conway, {\em A Course in Functional Analysis}, second edition, Graduate Texts in Mathematics, 96. Springer-Verlag, New York, 1990.

\bibitem{Da}K.R. Davidson, {\em Nest algebras}, 
Pitman Research Notes in Mathematics Series, 191. Longman Scientific and Technical, Harlow, New York, 1988.

\bibitem{FoSu} C. Foia\c s  and I. Suciu, {\em On operator representation of logmodular algebras}, Bull. Acad. Polon. Sci. S\' er. 
Sci. Math. Astronom. Phys. 16 (1968), 505–509.

\bibitem{GoKr} I.C. Gohberg and M.G. Krein, {\em Theory and applications of Volterra operators in Hilbert space},
Trans.  Math. Monographs, Vol. 24, Amer. Math. Soc. Providence,  1970.

\bibitem{Ha} P.R. Halmos, {\em Two subspaces}, Trans. Amer. Math. Soc. 144 (1969), 381–389.

\bibitem{Ho} K. Hoffman, {\em Analytic functions and logmodular Banach algebras}, Acta Math. 108 (1962), 271–317.

\bibitem{HoRo}K. Hoffman and H. Rossi, {\em Function theory and multiplicative linear functionals}, Trans. Amer. Math. Soc. 116 (1965), 536–543.

\bibitem{JiSa} G. Ji and K.-S. Saito, {\em Factorization in subdiagonal algebras}, J. Funct. Anal. 159 (1998), no. 1, 191–202.

\bibitem{Ju} K. Juschenko, {\em Description of logmodular subalgebras in finite-dimensional $C^*$-algebras}, Indiana Univ. Math. J. 60 (2011), no. 4, 1171–1176.

\bibitem{La} D.R. Larson, {\em Nest algebras and similarity transformations}, Ann. of Math.  121 (1985), no. 2, 409–427.

\bibitem{La1} D.R. Larson, {\em Triangularity in operator algebras}, Surveys of some recent results in operator theory, Vol. II, 121–188, Pitman Res. Notes Math. Ser., 192, Longman Sci. Tech., Harlow, 1988.

\bibitem{McMuSa} M. McAsey, P.S. Muhly and K.-S.  Saito, {\em Nonselfadjoint crossed products (invariant subspaces and maximality)}, Trans. Amer. Math. Soc. 248 (1979), no. 2, 381–409.

\bibitem{PaRa} V.I. Paulsen and M. Raghupathi,
{\em Representations of logmodular algebras}, Trans. Amer. Math. Soc. 363 (2011), no. 5, 2627–2640.

\bibitem{Pi} D.R. Pitts, {\em Factorization problems for nests: factorization methods and characterizations of the universal factorization}, J. Funct. Anal. 79 (1988), no. 1, 57–90.

\bibitem{Po} S.C. Power, {\em Factorization in analytic operator algebras}, J. Funct. Anal. 67 (1986), no. 3, 413–432.

\bibitem{Po1} S. Power, {\em Analysis in nest algebras}, Surveys of some recent results in operator theory, Vol. II, 189–234, Pitman Res. Notes Math. Ser., 192, Longman Sci. Tech., Harlow, 1988.

\bibitem{RaRo} H. Radjavi and P. Rosenthal, {\em Invariant subspaces}, Second edition. Dover Publications, Inc., Mineola, NY, 2003.

\bibitem{So} B. Solel, {\em Analytic operator algebras (factorization and an expectation)}, Trans. Amer. Math. Soc. 287 (1985), no. 2, 799–817.

\bibitem{So1} B. Solel, {\em Cocycles and factorization in analytic operator algebras}, J. Operator Theory 20 (1988), no. 2, 295–309.

\bibitem{SrWa} T.P.  Srinivasan and J-K. Wang, {\em Weak$^*$-Dirichlet algebras}, Function Algebras (Proc. Internat. Sympos. on Function Algebras, Tulane Univ., 1965) Scott-Foresman, Chicago, 1966, pp. 216–249.

\end{thebibliography}
\end{document}